\documentclass[11pt,twoside]{article}

\usepackage{float}

\newcommand{\HeadTitle}{Higher-order tangent and secant numbers, special-value integrals, and a common zero attractor}

\newcommand{\HeadTitleTwo}{\begin{center}
\Large{\textit{Higher-order tangent and secant numbers, special-value integrals, and a common zero attractor}}
\end{center}}

\usepackage[margin=0.8in]{geometry}
\usepackage{amsmath, amssymb, amsfonts, mathtools}
\usepackage{amsthm}
\usepackage{mathrsfs}
\usepackage{stmaryrd}
\usepackage{esint}
\usepackage{eqnalign}

\usepackage{graphicx}
\usepackage{enumitem}
\usepackage[most]{tcolorbox}
\usepackage{float}
\usepackage{listings}
\usepackage{array}
\usepackage{booktabs}
\usepackage{xcolor}
\usepackage{emptypage}
\usepackage{tensor}
\usepackage{tabularx}

\usepackage{fancyhdr}
\usepackage{hyperref}
\usepackage[nameinlink,noabbrev]{cleveref}

\usepackage{titlesec}
\usepackage[T1]{fontenc}
\usepackage[utf8]{inputenc}
\usepackage{lmodern}
\usepackage{titling}

\usepackage[backend=biber,style=numeric]{biblatex}
\providecommand{\HeadTitle}{} % <-- TITRE VARIABLE
\providecommand{\HeadTitleTwo}{} % <-- TITRE VARIABLE
\providecommand{\HeadAuthor}{Luc Ramsès TALLA WAFFO} % <-- FIXE

\titleformat{\section}[block]
  {\normalfont\large\bfseries\itshape\centering}
  {§\thesection.}
  {1em}
  {}

\titleformat{\subsection}
  {\normalfont\normalsize\bfseries}
  {\thesubsection}
  {1em}
  {}

\newtheorem{theorem}{Theorem}[section]
\newtheorem{lemma}[theorem]{Lemma}
\newtheorem{proposition}[theorem]{Proposition}
\newtheorem{corollary}[theorem]{Corollary}

\newtheorem{remark}[theorem]{Remark}
\newtheorem{definition}[theorem]{Definition}

\crefname{example}{example}{examples}
\Crefname{example}{Example}{Examples}

\crefname{corollary}{corollary}{corollaries}
\Crefname{corollary}{Corollary}{Corollaries}

\crefname{definition}{definition}{definitions}
\Crefname{definition}{Definition}{Definitions}

\crefname{remark}{remark}{remarks}
\Crefname{remark}{Remark}{Remarks}

\crefname{conjecture}{conjecture}{conjectures}
\Crefname{conjecture}{Conjecture}{Conjectures}

\crefname{lemma}{lemma}{lemmas}
\Crefname{lemma}{Lemma}{Lemmas}

\crefname{proposition}{proposition}{propositions}
\Crefname{proposition}{Proposition}{Propositions}

\crefname{theorem}{theorem}{theorems}
\Crefname{theorem}{Theorem}{Theorems}

\numberwithin{equation}{section}
\usepackage{fontspec}
\begin{document}

\thispagestyle{fancy}

\vspace{0.2cm}

\begin{center}
\Large{\HeadTitleTwo}
\end{center}

\hspace{3cm}

\begin{center}
Luc Ramsès TALLA WAFFO \\
Technische Universität Darmstadt\\
Karolinenplatz 5, 64289 Darmstadt, Germany\\
ramses.talla@stud.tu-darmstadt.de\\
\vspace{0.5cm}
\today
\end{center}

\begin{abstract}
We study two families of coefficients associated with powers of the tangent
function and with products of the secant and tangent functions, and develop
a unified treatment of the corresponding higher-order tangent and secant
numbers.  The first part establishes a common identity for even derivatives
of $\tanh x$ and odd derivatives of $\operatorname{sech}x$.  Combined with
integral formulae for these derivatives, this yields finite power-integral
representations for the normalized values $\zeta(2n+1)$ and $\beta(2n)$ and
naturally introduces two polynomial families $P_n$ and $Q_n$.

We determine several arithmetic and structural properties of these
polynomials.  In particular, we identify the exact content of $P_n$, prove
that $Q_n$ is primitive, establish strict log-concavity and unimodality of
their coefficient sequences, and describe their real zeros.  For even
indices each family has a unique negative real zero, and the two sequences
of real zeros have the same limit.  We also derive differential recurrences
and generating functions for $P_n$ and $Q_n$.

The generating functions lead to a global complex-singularity analysis.
We construct a compact Jordan curve $\Gamma$ and prove that the complex zero
sets of both $P_n$ and $Q_n$ converge to $\Gamma$ in Hausdorff distance.  The
normalized zero-counting measures converge weakly to the same equilibrium
measure on $\Gamma$.  We determine the logarithmic capacity of the enclosed
compact set, give an explicit arclength density for the limiting measure,
and compute its first moments.  Numerical plots for $P_{20}$, $P_{50}$, $P_{100}$ and
$P_{150}$ illustrate the zero-attractor theorem.
\end{abstract}

\vspace{0.2cm}

\paragraph{Notation.}
We collect here the notation used throughout the paper, with particular
attention to the set-theoretic and potential-theoretic notation in
\cref{sec:complex-zero-attractor}.

\emph{\underline{Basic notation.}}
We write
$\mathbb N=\{1,2,3,\ldots\}$,
$\mathbb Z$ for the integers, $\mathbb Q$ for the rational numbers,
$\mathbb R$ for the real numbers, and $\mathbb C$ for the complex plane;
$\mathbb Q^{\times}=\mathbb Q\setminus\{0\}$.
The Riemann sphere is
\[
\widehat{\mathbb C}:=\mathbb C\cup\{\infty\}.
\]
For $z\in\mathbb C$, $\Re z$, $\Im z$, $\overline z$, and $|z|$ denote,
respectively, its real part, imaginary part, complex conjugate, and modulus.
The symbols $\zeta$ and $\beta$ denote the Riemann zeta function and the
Dirichlet beta function.  We use the usual Landau symbols $O(\cdot)$ and
$o(\cdot)$; a subscript, as in $O_L(\cdot)$, indicates that the estimate is
uniform for the parameter in the set $L$.

\emph{\underline{Coefficient and arithmetic notation.}}
For a formal or convergent power series $f(x)$, the coefficient of $x^m$ is
written $[x^m]f(x)$; similarly, $[y^j]P(y)$ denotes the coefficient of
$y^j$ in a polynomial $P$.  The Bernoulli numbers are denoted by $B_m$, and
the Euler numbers $E_{2m}$ are normalized by
\[
\sec x=\sum_{m\geq0}(-1)^mE_{2m}\frac{x^{2m}}{(2m)!}.
\]
For $r\in\mathbb Q^{\times}$, $\nu_2(r)$ is the $2$-adic valuation,
normalized by $\nu_2(2)=1$.  For a nonnegative integer $m$, $s_2(m)$ denotes
the sum of the binary digits of $m$.  For a nonzero polynomial with integer
coefficients, $\operatorname{cont}(P)$ denotes the positive greatest common
divisor of its coefficients.  The notation $\lfloor t\rfloor$ has its usual
meaning.

\emph{\underline{Sets and topology.}}
For a subset $A\subseteq\mathbb C$, we write $A^\circ$ for its interior,
$\overline A$ for its closure, and $\partial A:=\overline A\setminus A^\circ$
for its boundary.  If $A,B$ are sets, $A\setminus B$ denotes set difference.
For $r>0$,
\[
\mathbb D_r:=\{z\in\mathbb C:|z|<r\},
\qquad
B(z_0,r):=\{z\in\mathbb C:|z-z_0|<r\}.
\]
If $U\subseteq\mathbb C$ is open, the notation $L\Subset U$ means that
$L$ is compactly contained in $U$, i.e. $\overline L$ is compact and
$\overline L\subset U$.  If a set $A$ contains the origin,
$\operatorname{Comp}_0 A$ denotes the connected component of $A$ containing
$0$.  For $z\in\mathbb C$ and a nonempty set $A\subseteq\mathbb C$,
\[
\operatorname{dist}(z,A):=\inf_{a\in A}|z-a|.
\]
For nonempty compact sets $A,B\subseteq\mathbb C$, their Hausdorff distance
is
\[
d_H(A,B)
:=\max\left\{
\sup_{a\in A}\operatorname{dist}(a,B),
\sup_{b\in B}\operatorname{dist}(b,A)
\right\}.
\]
For a sequence of sets $(A_n)$, the Kuratowski upper and lower limits are
understood in the standard sequential sense:
\[
\limsup_{n\to\infty}A_n
:=\left\{z:\exists\,n_j\to\infty,\ \exists\,z_j\in A_{n_j}
\text{ with }z_j\to z\right\},
\]
while
\[
\liminf_{n\to\infty}A_n
:=\left\{z:\text{every neighborhood of }z\text{ meets }A_n
\text{ for all sufficiently large }n\right\}.
\]
For a polynomial $p$, $Z(p)$ denotes its zero set without multiplicity.
When sums are written in the form $\sum_{p(\zeta)=0}$, zeros are counted
with multiplicity unless stated otherwise.

\emph{\underline{Measure and potential notation.}}
For $\zeta\in\mathbb C$, $\delta_\zeta$ denotes the unit point mass at
$\zeta$, and $\operatorname{supp}\mu$ denotes the closed support of a Borel
measure $\mu$.  The notation $\mu_n\xrightarrow{\mathrm w}\mu$ (or
$\mu_n\Rightarrow\mu$) means weak convergence of finite Borel measures.
The planar Laplacian is denoted by $\Delta$, and $|dy|$ denotes arclength on
a rectifiable curve.  On a regular boundary arc, $\partial_{\mathbf n}$
denotes differentiation in the normal direction pointing from the compact
side toward the exterior domain.  The logarithmic capacity of a compact set
$K$ is written $\operatorname{cap}(K)$.

\emph{\underline{The principal sets in \cref{sec:complex-zero-attractor}.}}
The complex-zero analysis uses the open central strip $\mathcal S$, the
principal critical domain $\Omega$, the exterior domain $E$, its compact
complement $K$, and the common zero-attractor curve $\Gamma$.  They are
defined precisely in \eqref{eq:complex-Omega-def} and
\eqref{eq:complex-E-def}--\eqref{eq:complex-K-Gamma-def}; in particular,
\[
\mathcal S=\{x\in\mathbb C:|\Re x|<\rho\},
\qquad
\Omega=\operatorname{Comp}_0\{x\in\mathcal S:|f(x)|<\rho\},
\]
\[
E=Y(\mathbb D_R),
\qquad
K=\widehat{\mathbb C}\setminus E,
\qquad
\Gamma=\partial K=\partial E.
\]
Thus $E$ is the unbounded component containing $\infty$, $K\subset\mathbb C$
is compact, $K^\circ$ denotes its interior, and $\Gamma$ is their common
boundary.  The constants $\rho=\pi/2$ and $R=\rho^2$ are fixed in
\eqref{eq:complex-rho-R-c}.  The dented domains
$\Delta(s_0;\eta,\theta)$ used in singularity analysis are defined in
\eqref{eq:complex-Delta-domain}.  Whenever fractional powers, square roots,
or $\arcsin$ occur in the exterior analysis, the relevant branches are the
ones specified locally in the text, ultimately obtained by analytic
continuation from the real branch $y>1$.

\vspace{0.5cm}

\section*{Introduction}
The purpose of this paper is to connect three themes that arise naturally
from higher-order tangent and secant numbers: derivative identities for
hyperbolic functions, integral representations of special values of zeta
and beta functions, and the arithmetic and zero geometry of the polynomial
families generated by those representations.

\vspace{0.2cm}

The starting point is a unified coefficient array $C_\alpha(m,k)$,
$\alpha\in\{0,1\}$, which simultaneously contains the higher-order tangent
and secant numbers.  The common recurrence for these coefficients is paired
with the differential recurrence satisfied by
$\operatorname{sech}^{\alpha}x\,\tanh^k x$.  This leads to the unified
identity \eqref{eq:unified-main-identity} and, in particular, to the two
finite derivative formulae \eqref{eq:main-tanh-identity} and
\eqref{eq:main-sech-identity}.  Integrating these identities against
$x^{-1}$ yields the power-integral representations in
\cref{prop:zeta-beta-power-integrals} for the normalized odd zeta values
$\zeta(2n+1)/\pi^{2n}$ and even beta values $\beta(2n)/\pi^{2n-1}$.

\vspace{0.2cm}

These finite sums naturally define the polynomial families $P_n$ and $Q_n$.
Their arithmetic behavior is notably different: the content of $P_n$ is an
explicit power of two, whereas $Q_n$ is primitive.  We also determine edge
coefficients, prove strict log-concavity and unimodality, derive differential
recurrences, and analyze the real zeros.  In particular, $P_{2n}$ and
$Q_{2n}$ each have exactly one real zero, both negative, while the odd-index
polynomials have no real zeros; the two even-index real zeros approach the
same limiting value.

\vspace{0.2cm}

The final part of the paper concerns the full complex zero sets.  The
generating functions in \cref{prop:PQ-generating-functions} lead to the same
critical-point geometry for both polynomial families.  We isolate a
principal critical domain, construct an exterior conformal map, and obtain a
compact Jordan curve $\Gamma$.  Uniform coefficient asymptotics on the two
sides of $\Gamma$, together with a separate control at infinity, imply the
main zero-attractor theorem: $Z(P_n)$ and $Z(Q_n)$ both converge to $\Gamma$
in Hausdorff distance.  The normalized zero-counting measures converge to a
common probability measure $\mu_\Gamma$, which is identified with the
equilibrium measure of the enclosed compact set.  Its logarithmic capacity,
arclength density, first moments, and the asymptotic ratio of the two monic
families are then made explicit.  The numerical experiments for
$n=20,50,100, 150$ provide a direct visualization of this limiting geometry.

\vspace{0.2cm}

The paper is organized accordingly.  Section~1 develops the unified
higher-derivative identities.  Sections~2 and~3 derive the special-value
integrals and introduce $P_n$ and $Q_n$.  Sections~4 and~5 study their
arithmetic, coefficient, recurrence, and real-zero properties.  Section~6
contains the global complex-zero analysis, the common zero-attractor theorem,
and the limiting zero distribution.

\section{Novel identities for the even-order derivatives of $\tanh x$ and the odd-order derivatives of $\operatorname{sech}x$}
\label[section]{sec:novel-identities}

The classical tangent and secant numbers arise as the exponential
Taylor coefficients of $\tan z$ and $\sec z$, respectively, and go back to
the work of Andr\'e; see \cite{andre1879}.  For classical treatments and
computational aspects, see also \cite{knuth-buckholtz1967}.  For explicit
formulas and recurrence relations for the classical tangent numbers, as well
as higher-order derivative formulae for the tangent function and related
trigonometric and hyperbolic functions, see also Qi \cite{Qi2015}.  Their
higher-order (or generalized) analogues occur in the work of Comtet and
Carlitz--Scoville and have subsequently been studied in a number of
contexts; see, for example,
\cite{Comtet1974,CarlitzScoville1972,carlitz1975,Cvijovic2009,cvijovic2011}.
A systematic treatment of the higher-order tangent and secant numbers,
including recurrence relations, explicit formulae, Bell-polynomial
representations, and a discussion of the different normalizations used in
the literature, is given by Cvijovi\'c \cite{cvijovic2011}.
\subsection{Normalization and unified coefficients}

Since two normalizations are commonly encountered, we make our convention
explicit.  In the normalized convention used by Comtet for the
higher-order tangent numbers and by Cvijovi\'c \cite{cvijovic2011}, one may
write
\begin{equation}
    \dfrac{\tan^k z}{k!}
    =
    \sum_{m=k}^{\infty}
    \mathcal{T}(m,k)\dfrac{z^m}{m!},
    \qquad
    \dfrac{\sec z\,\tan^k z}{k!}
    =
    \sum_{m=k}^{\infty}
    \mathcal{S}(m,k)\dfrac{z^m}{m!}.
    \label{eq:normalized-tangent-secant-numbers}
\end{equation}
Carlitz and Scoville instead use the corresponding \emph{unnormalized}
numbers, customarily denoted by $T_m^{(k)}$ and $S_m^{(k)}$, for which
\begin{equation}
    \tan^k z
    =
    \sum_{m=k}^{\infty}
    T_m^{(k)}\dfrac{z^m}{m!},
    \qquad
    \sec z\,\tan^k z
    =
    \sum_{m=k}^{\infty}
    S_m^{(k)}\dfrac{z^m}{m!}.
    \label{eq:unnormalized-tangent-secant-numbers}
\end{equation}
Thus
$T_m^{(k)}=k!\,\mathcal{T}(m,k)$ and
$S_m^{(k)}=k!\,\mathcal{S}(m,k)$; see
\cite{CarlitzScoville1972,cvijovic2011}.  We adopt this unnormalized
convention throughout, which is also the convention used in the derivative
formulae of Cvijovi\'c \cite{Cvijovic2009}.

The two families admit a single treatment.  We therefore use one auxiliary
parameter $\alpha\in\{0,1\}$ throughout.

\begin{definition}[Unified tangent--secant coefficients]
For $\alpha\in\{0,1\}$ and $k\geq 0$, define $C_\alpha(m,k)$ by
\begin{equation}
    \sec^\alpha z\,\tan^k z
    =
    \sum_{m=k}^{\infty}
    C_\alpha(m,k)\dfrac{z^m}{m!}.
    \label{eq:unified-coefficients}
\end{equation}
We use the convention $C_\alpha(m,k)=0$ if $k<0$ or $m<k$.  Thus
$T(m,k):=C_0(m,k)$ and $S(m,k):=C_1(m,k)$ are precisely the
unnormalized higher-order tangent and secant numbers, respectively; in the
notation above,
$T(m,k)=T_m^{(k)}$ and $S(m,k)=S_m^{(k)}$.  Equivalently,
$T(m,k)=k!\,\mathcal{T}(m,k)$ and
$S(m,k)=k!\,\mathcal{S}(m,k)$.
For $k=0$, our convention gives $T(0,0)=1$ and $T(m,0)=0$ for
$m\geq1$, while $S(m,0)$ are the classical secant numbers.  Moreover,
$T(m,1)$ are the classical tangent numbers.  By parity,
$T(m,k)=S(m,k)=0$ unless $m\equiv k\pmod 2$.  Finally, put
\begin{equation}
    \rho_\alpha(r):=1-\alpha+\alpha r.
    \label{eq:rho-def}
\end{equation}
Hence $\rho_0(r)=1$ and $\rho_1(r)=r$.
\end{definition}

\subsection{Related derivative formulae}

These coefficient arrays occur naturally in closed-form formulae for
higher derivatives.  For example, the formulae of Cvijovi\'c
\cite[Corollary~2]{Cvijovic2009}, specialized to the orders considered in
this section and written in the present normalization, give
\begin{align}
\dfrac{d^{2n}}{dx^{2n}}\tanh x
&=
\sum_{j=0}^{n}
\dfrac{(-1)^{n+j}}{2j+1}\,
T(2n+1,2j+1)\tanh^{2j+1}x,
\label{eq:literature-tanh-cvijovic}\\
\dfrac{d^{2n-1}}{dx^{2n-1}}\operatorname{sech}x
&=
\operatorname{sech}x
\sum_{j=0}^{n-1}
(-1)^{n+j}\,
S(2n-1,2j+1)\tanh^{2j+1}x.
\label{eq:literature-sech-cvijovic}
\end{align}
Franssens obtained a different explicit representation in terms of the
Eulerian and MacMahon number triangles.  Specializing his formulae to the
two orders considered here gives, for $n\geq 1$,
\begin{align}
\dfrac{d^{2n}}{dx^{2n}}\tanh x
&=
-\dfrac{1}{\cosh^{2n+1}x}
\sum_{k=0}^{2n-1}
(-1)^k A_{2n,k+1}
\sinh\bigl((2n-1-2k)x\bigr),
\label{eq:literature-tanh-franssens}\\
\dfrac{d^{2n-1}}{dx^{2n-1}}\operatorname{sech}x
&=
-\dfrac{1}{2^{2n-1}\cosh^{2n}x}
\sum_{k=0}^{2n-1}
(-1)^k B_{2n,k+1}
\sinh\bigl((2n-1-2k)x\bigr),
\label{eq:literature-sech-franssens}
\end{align}
where \(A_{m,k}\) and \(B_{m,k}\) denote, respectively, the ordinary Eulerian numbers and the MacMahon numbers (equivalently, the Eulerian numbers of type~\(B\)) in the notation of Franssens; see \cite[Sec.~5.3.1]{Franssens2007}.  Thus, these formulae express the
relevant derivatives as rational functions of hyperbolic functions, with
coefficients from the Eulerian and MacMahon triangles.  Related
derivative-polynomial representations were obtained by Hoffman and
Boyadzhiev \cite{Hoffman1995,boyadzhiev2007}.

More recently, Kyrion obtained alternative formulae precisely for even
derivatives of $\tanh x$ and odd derivatives of $\operatorname{sech}x$;
in the present notation, his formulae include
\begin{align}
\dfrac{d^{2n}}{dx^{2n}}\tanh x
&=
\tanh x\sum_{k=1}^{n}
\left(
\dfrac{2}{4^k}\sum_{j=1}^{k}
\binom{2k}{k-j}(-1)^j(2j)^{2n}
\right)
\operatorname{sech}^{2k}x,
\label{eq:literature-tanh-kyrion}\\
\dfrac{d^{2n-1}}{dx^{2n-1}}\operatorname{sech}x
&=
\tanh x\sum_{k=0}^{n-1}
\left(
\dfrac{1}{4^k}\sum_{j=0}^{k}
\binom{2k+1}{k-j}(-1)^{j+1}(2j+1)^{2n-1}
\right)
\operatorname{sech}^{2k+1}x.
\label{eq:literature-sech-kyrion}
\end{align}
See \cite[Lemma~3.3 and Proposition~4.3]{Kyrion2025}.  The identities
proved below have a different form: instead of expanding the derivatives
only as polynomials in $\tanh x$ and $\operatorname{sech}x$, they express
them as finite linear combinations of derivatives of powers of $\tanh x$
and of products $\operatorname{sech}x\,\tanh^p x$, with coefficients
given by the same higher-order tangent and secant numbers.

\subsection{Coefficient and differential recurrences}

\begin{lemma}
For $m,k\geq 0$ and $\alpha\in\{0,1\}$,
\begin{equation}
    C_\alpha(m+1,k)
    =
    k\,C_\alpha(m,k-1)
    +(k+\alpha)\,C_\alpha(m,k+1).
    \label{eq:unified-coefficient-recurrence}
\end{equation}
\end{lemma}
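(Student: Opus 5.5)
Differentiate the generating function \eqref{eq:unified-coefficients} once in $z$ and compare coefficients. Put $F_k(z):=\sec^\alpha z\,\tan^k z$. Using $(\tan z)'=1+\tan^2 z$ and $(\sec z)'=\sec z\,\tan z$ (so that $(\sec^\alpha z)'=\alpha\sec^\alpha z\tan z$ for $\alpha\in\{0,1\}$), the product rule gives
\begin{equation*}
F_k'(z)=k\sec^\alpha z\,\tan^{k-1}z\,(1+\tan^2 z)+\alpha\sec^\alpha z\,\tan^{k+1}z
= k\,F_{k-1}(z)+(k+\alpha)\,F_{k+1}(z).
\end{equation*}
For $k=0$ the first term carries the factor $k=0$, so no negative power of $\tan z$ appears. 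This is consistent with the convention $C_\alpha(m,-1)=0$.

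Next I expand both sides. Since $F_k(z)=\sum_m C_\alpha(m,k)z^m/m!$ is an exponential generating function, we have
\[
F_k'(z)=\sum_{m\ge0}C_\alpha(m+1,k)\frac{z^m}{m!}.
\]
The right-hand side is
\[
\sum_{m\ge0}\bigl(kC_\alpha(m,k-1)+(k+\alpha)C_\alpha(m,k+1)\bigr)\frac{z^m}{m!}.
\]
Here the sums may be taken over all $m\ge0$ because of the convention $C_\alpha(m,k)=0$ for $m<k$. Comparing the coefficients of $z^m/m!$ yields \eqref{eq:unified-coefficient-recurrence}. These are identities of convergent power series near $0$, or equally of formal power series, so the comparison is legitimate.

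The only point needing care is the boundary cases. When $m+1<k$, both sides vanish. For $k=0$ and $\alpha=0$, the identity reads $C_0(m+1,0)=0\cdot C_0(m,1)$; this matches $T(m+1,0)=0$ and is forced by $F_0=1$. For $k=0$ and $\alpha=1$, the recurrence gives $S(m+1,0)=S(m,1)$, which is just $(\sec z)'=\sec z\tan z$. No step is a genuine obstacle. The main thing to verify is that the factor $\alpha$ arising from differentiating $\sec^\alpha$ combines with $k$ from the $\tan^2$ term to give the coefficient $k+\alpha$.
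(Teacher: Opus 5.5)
Your proof is correct and is exactly the paper's argument: differentiate $\sec^\alpha z\tan^k z$, use $\sec^2 z=1+\tan^2 z$ to obtain $kF_{k-1}+(k+\alpha)F_{k+1}$, and compare coefficients of $z^m/m!$. Your remarks on the boundary cases $k=0$ and $m+1<k$ are accurate; they spell out what the convention $C_\alpha(m,k)=0$ for $k<0$ or $m<k$ already handles.
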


\begin{proof}
Differentiating \eqref{eq:unified-coefficients} and using
$\sec^2z=1+\tan^2z$ gives
\[
\dfrac{d}{dz}\bigl(\sec^\alpha z\tan^kz\bigr)
=
k\sec^\alpha z\tan^{k-1}z
+(k+\alpha)\sec^\alpha z\tan^{k+1}z.
\]
Comparison of exponential generating-function coefficients proves
\eqref{eq:unified-coefficient-recurrence}.
\end{proof}

\medskip
For the hyperbolic side, define
\begin{equation}
    F_{\alpha,k}(x)
    :=
    \operatorname{sech}^{\alpha}x\,\tanh^k x,
    \qquad k\geq 0.
    \label{eq:F-def}
\end{equation}
Since $\dfrac{d}{dx}\tanh x=1-\tanh^2x$ and
$\dfrac{d}{dx}\operatorname{sech}x=-\operatorname{sech}x\tanh x$, we have the
companion relation
\begin{equation}
    \dfrac{d}{dx}F_{\alpha,k}
    =
    kF_{\alpha,k-1}
    -(k+\alpha)F_{\alpha,k+1}.
    \label{eq:F-recurrence}
\end{equation}
For $p\geq 1$, set
\begin{equation}
    G_{\alpha,p}
    :=
    \dfrac{d^{p-\alpha}}{dx^{p-\alpha}}F_{\alpha,p+1-\alpha}.
    \label{eq:G-def}
\end{equation}
Thus $G_{0,p}=\dfrac{d^p}{dx^p}\!\left(\tanh^{p+1}x\right)$ and
$G_{1,p}=\dfrac{d^{p-1}}{dx^{p-1}}\!\left(\operatorname{sech}x\,\tanh^p x\right)$.

\begin{lemma}
For every $p\geq 1$ and $\alpha\in\{0,1\}$,
\begin{equation}
    \dfrac{d^2}{dx^2}G_{\alpha,p}
    =
    -(p+1-\alpha)!
    \sum_{q=1}^{p+1}
    \dfrac{\rho_\alpha(q)}{(q-1)!}\,
    G_{\alpha,q}.
    \label{eq:unified-G-second-derivative}
\end{equation}
\end{lemma}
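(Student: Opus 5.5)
The plan is a direct computation in the ring generated by the $F_{\alpha,k}$, with $D:=d/dx$ acting through the companion relation \eqref{eq:F-recurrence}. First write both sides in terms of derivatives of the $F_{\alpha,k}$. By \eqref{eq:G-def} the left-hand side is $D^{p+2-\alpha}F_{\alpha,p+1-\alpha}$, and the summand $G_{\alpha,q}$ on the right is $D^{q-\alpha}F_{\alpha,q+1-\alpha}$. Since $D^2 G_{\alpha,p}$ has derivative order $p+2-\alpha$ while the $G_{\alpha,q}$ have order at most $p+1-\alpha$, the identity must really be a reduction formula. It says that the $(p+2-\alpha)$-th derivative of $F_{\alpha,p+1-\alpha}$ is a combination of lower-order derivatives of higher powers. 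I would establish it by induction on $p$.

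The key auxiliary identity comes from iterating \eqref{eq:F-recurrence} twice:
\[
D^2F_{\alpha,k}=k(k-1)F_{\alpha,k-2}-\bigl(k^2+(k+\alpha)^2-\alpha\bigr)F_{\alpha,k}+(k+\alpha)(k+1+\alpha)F_{\alpha,k+2}.
\]
The middle coefficient is to be recomputed carefully. Combined with one use of \eqref{eq:F-recurrence} in the form
\[
(k+\alpha)F_{\alpha,k+1}=kF_{\alpha,k-1}-DF_{\alpha,k},
\]
this lets one trade a derivative for a shift in the exponent $k$. Concretely, I would write $D^{p+2-\alpha}F_{\alpha,p+1-\alpha}=D^{p-\alpha}\bigl(D^2F_{\alpha,p+1-\alpha}\bigr)$ and expand. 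The term $D^{p-\alpha}F_{\alpha,p+3-\alpha}$ is then rewritten by the first-order relation as a combination of $D^{p+1-\alpha}F_{\alpha,p+2-\alpha}=G_{\alpha,p+1}$ and $D^{p-\alpha}F_{\alpha,p+1-\alpha}=G_{\alpha,p}$. The lower term $D^{p-\alpha}F_{\alpha,p-1-\alpha}=D^2G_{\alpha,p-2}$, and more generally the pieces of the form $D^2G_{\alpha,q}$ with $q<p$, are handled by the induction hypothesis. Summing, the coefficients should telescope into the factorial form $-(p+1-\alpha)!\,\rho_\alpha(q)/(q-1)!$.

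To organize the bookkeeping, I would alternatively pass to the variable $t=\tanh x$ and write $F_{\alpha,k}=\operatorname{sech}^\alpha x\,t^k$, with
\[
D\bigl(\operatorname{sech}^\alpha x\,\phi(t)\bigr)=\operatorname{sech}^\alpha x\,\bigl((1-t^2)\phi'(t)-\alpha t\,\phi(t)\bigr).
\]
Both sides then become $\operatorname{sech}^\alpha x$ times an explicit polynomial in $t$, and the claim reduces to a polynomial identity checkable coefficientwise. In the same setting one can use the generating function $\sum_k F_{\alpha,k}y^k/k!$ together with the recurrence \eqref{eq:unified-coefficient-recurrence} for $C_\alpha(m,k)$.

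The main obstacle is the exact normalization of the constants: the prefactor $(p+1-\alpha)!$ and the weights $\rho_\alpha(q)/(q-1)!$. Before running the induction I would check the base cases $p=1$ and $p=2$ for both $\alpha=0$ and $\alpha=1$ explicitly in the variable $t$. For $\alpha=0$, $p=1$ this means comparing $D^3t^2$ with $-(Dt^2+D^2t^3)$, which pins down the overall constant. If the constants do not match as stated, I would rescale the prefactor accordingly before attempting the telescoping.
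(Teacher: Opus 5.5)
Your strategy can be carried through, but the auxiliary identity you call the key one is wrong as written, and with it the case $\alpha=1$ does not close. Iterating \eqref{eq:F-recurrence} gives the middle coefficient $k(k-1+\alpha)+(k+\alpha)(k+1)=2k^2+2\alpha k+\alpha=k^2+(k+\alpha)^2$, not $k^2+(k+\alpha)^2-\alpha$. Already for $k=0$, $\alpha=1$ one has $D^2\operatorname{sech}x=-F_{1,0}+2F_{1,2}$, whereas your formula drops the $-F_{1,0}$.

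With the correct coefficient, take $k=p+1-\alpha$ and use your elimination $(p+2)F_{\alpha,p+3-\alpha}=(p+2-\alpha)F_{\alpha,p+1-\alpha}-DF_{\alpha,p+2-\alpha}$. This gives
\[
D^2G_{\alpha,p}=(p+1-\alpha)(p-\alpha)\,D^2G_{\alpha,p-2}-p(p+1-\alpha)\,G_{\alpha,p}-(p+1)\,G_{\alpha,p+1}\qquad(p\geq3).
\]
The hypothesis at $p-2$ closes this exactly, because $(p+1-\alpha)(p-\alpha)(p-1-\alpha)!=(p+1-\alpha)!$ and $p(p+1-\alpha)=(p+1-\alpha)!\,\rho_\alpha(p)/(p-1)!$. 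Your coefficient would instead put $1-p^2$ in front of $G_{1,p}$.

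Also, no rescaling of the prefactor is needed; the statement's constants are right. For $\alpha=0$, $p=1$ the right-hand side is $-2\bigl(Dt^2+D^2t^3\bigr)$, not $-\bigl(Dt^2+D^2t^3\bigr)$. Indeed $D^3t^2=-16t+40t^3-24t^5=-2\bigl(Dt^2+D^2t^3\bigr)$.

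Once repaired, your argument is a step-two version of the paper's, and the paper's is simpler. The paper peels off a single derivative, $D^{p+2-\alpha}F_{\alpha,p+1-\alpha}=D^{p+1-\alpha}\bigl(DF_{\alpha,p+1-\alpha}\bigr)$. One application of \eqref{eq:F-recurrence} then gives directly
\[
D^2G_{\alpha,p}=(p+1-\alpha)\,D^2G_{\alpha,p-1}-(p+1)\,G_{\alpha,p+1},
\]
since $D^{p+1-\alpha}F_{\alpha,p-\alpha}=D^2G_{\alpha,p-1}$ and $D^{p+1-\alpha}F_{\alpha,p+2-\alpha}=G_{\alpha,p+1}$. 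Induction from $p-1$ to $p$, with the single base case $p=1$, then uses only $(p+1-\alpha)(p-\alpha)!=(p+1-\alpha)!$ and $p+1=(p+1-\alpha)!\,\rho_\alpha(p+1)/p!$.

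Splitting off $D^2$ instead has three costs. It creates the term $F_{\alpha,p+3-\alpha}$, which must be eliminated. It makes the $G_{\alpha,p}$ coefficient depend on the middle coefficient, which is exactly where your error sits. And it forces a second base case $p=2$. Nothing is gained in exchange. Your fallback of checking a polynomial identity in $t=\tanh x$ coefficientwise is not a proof for general $p$ unless it is organized by such a recursion.
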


\begin{proof}
For $p=1$, direct differentiation gives
\[
\dfrac{d^2}{dx^2}G_{0,1}=-2\bigl(G_{0,1}+G_{0,2}\bigr),
\qquad
\dfrac{d^2}{dx^2}G_{1,1}=-G_{1,1}-2G_{1,2},
\]
which is exactly \eqref{eq:unified-G-second-derivative}.

For $p\geq 2$, \eqref{eq:F-recurrence} yields
\begin{align}
\dfrac{d^2}{dx^2}G_{\alpha,p}
&=
\dfrac{d^{p+2-\alpha}}{dx^{p+2-\alpha}}F_{\alpha,p+1-\alpha}
\nonumber\\
&=
(p+1-\alpha)\dfrac{d^2}{dx^2}G_{\alpha,p-1}
-(p+1)G_{\alpha,p+1}.
\label{eq:G-recursion}
\end{align}
Assuming \eqref{eq:unified-G-second-derivative} for $p-1$ and using
$(p+1-\alpha)(p-\alpha)!=(p+1-\alpha)!$ and
$(p+1)=(p+1-\alpha)!\,\rho_\alpha(p+1)/p!$, we obtain
\eqref{eq:unified-G-second-derivative} for $p$.
\end{proof}

\subsection{The unified main identity}

\begin{theorem}[Unified tangent--secant identity]
Let $n\geq 1$ and $\alpha\in\{0,1\}$. Then
\begin{equation}
\boxed{
\dfrac{d^{2n-\alpha}}{dx^{2n-\alpha}}F_{\alpha,1-\alpha}
=
(-1)^n
\sum_{p=1}^{n}
\rho_\alpha(p)\,
C_\alpha(2n-p-\alpha,p-\alpha)\,
G_{\alpha,p}.
}
\label{eq:unified-main-identity}
\end{equation}
\end{theorem}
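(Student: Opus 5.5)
The proof is by induction on $n$, using \eqref{eq:unified-G-second-derivative} to raise the derivative order by two and \eqref{eq:unified-coefficient-recurrence} to collapse the resulting double sum. Write $L_n:=\frac{d^{2n-\alpha}}{dx^{2n-\alpha}}F_{\alpha,1-\alpha}$, so that $L_{n+1}=\frac{d^2}{dx^2}L_n$.

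\emph{Base case $n=1$.} For $\alpha=0$ we have $L_1=\frac{d^2}{dx^2}\tanh x=\frac{d}{dx}(1-\tanh^2x)=-G_{0,1}$, and the right-hand side equals $-\rho_0(1)C_0(1,1)G_{0,1}=-G_{0,1}$. For $\alpha=1$ we have $L_1=\frac{d}{dx}\operatorname{sech}x=-\operatorname{sech}x\tanh x=-G_{1,1}$, while the right-hand side is $-\rho_1(1)C_1(0,0)G_{1,1}=-G_{1,1}$.

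\emph{Inductive step.} Assume \eqref{eq:unified-main-identity} for $n$. Differentiate twice and apply \eqref{eq:unified-G-second-derivative} to each $G_{\alpha,p}$:
\[
L_{n+1}=(-1)^{n+1}\sum_{p=1}^{n}\rho_\alpha(p)(p+1-\alpha)!\,C_\alpha(2n-p-\alpha,p-\alpha)\sum_{q=1}^{p+1}\frac{\rho_\alpha(q)}{(q-1)!}G_{\alpha,q}.
\]
Interchanging the sums, the coefficient of $(-1)^{n+1}\rho_\alpha(q)G_{\alpha,q}$ for $1\le q\le n+1$ is
\[
\frac{1}{(q-1)!}\sum_{p=\max(1,q-1)}^{n}\rho_\alpha(p)(p+1-\alpha)!\,C_\alpha(2n-p-\alpha,p-\alpha).
\]
This must equal $C_\alpha(2n+2-q-\alpha,q-\alpha)$. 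The lower limit may be taken to be $p=q-1$ in every case. When $q=1$ and $\alpha=1$ the extra $p=0$ term carries the factor $\rho_1(0)=0$. When $q=1$ and $\alpha=0$ it contains $C_0(2n,0)=0$ for $n\ge1$.

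\emph{Key step: a telescoping identity.} Set $N=2n+2-\alpha$ and $D(q):=(q-1)!\,C_\alpha(N-q,q-\alpha)$. The claim is that
\[
D(q)-D(q+1)=\rho_\alpha(q-1)(q-\alpha)!\,C_\alpha(N-q-1,q-1-\alpha),
\]
which is exactly the $p=q-1$ summand, since $2n-(q-1)-\alpha=N-q-1$. To see it, apply \eqref{eq:unified-coefficient-recurrence} with $m=N-q-1$ and $k=q-\alpha$:
\[
C_\alpha(N-q,q-\alpha)=(q-\alpha)C_\alpha(N-q-1,q-1-\alpha)+q\,C_\alpha(N-q-1,q+1-\alpha).
\]
Multiplying by $(q-1)!$, the second term is precisely $D(q+1)$. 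The first term gives $(q-1)!(q-\alpha)C_\alpha(N-q-1,q-1-\alpha)$, and one checks $(q-1)!(q-\alpha)=\rho_\alpha(q-1)(q-\alpha)!$ in both cases $\alpha=0,1$.

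Summing the identity from $q$ upward telescopes to $D(q)$. The sum terminates because $D(q')=0$ once $N-q'<q'-\alpha$, which is exactly the range $p>n$ of the original sum. This yields the required coefficient and closes the induction.

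The main obstacle is bookkeeping rather than ideas. One must get the summation limits right after the interchange, including the edge cases $q=1$ and $q=n+1$. One must also confirm that the vanishing conventions $C_\alpha(m,k)=0$ for $m<k$ or $k<0$ make the telescoping terminate precisely at $p=n$.
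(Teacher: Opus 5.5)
Your proposal is correct and follows the paper's proof essentially step for step: induction on $n$ with the same base-case checks and the same disposal of the formal $p=0$ term, the second-derivative lemma for $G_{\alpha,p}$ followed by interchanging the sums, and collapsing the inner sum by telescoping through the coefficient recurrence. Your $D(q)$ is just the paper's $X_{q-1}$, where $X_p=p!\,C_\alpha(2n-p+1-\alpha,p+1-\alpha)$, and your termination $D(n+2)=0$ is the paper's $X_{n+1}=0$.
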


\begin{proof}
We proceed by induction on $n$.  Since
$C_\alpha(1-\alpha,1-\alpha)=1$, the case $n=1$ follows from $\dfrac{d^2}{dx^2}\tanh x=-\dfrac{d}{dx}(\tanh^2x)$ and
$\dfrac{d}{dx}\operatorname{sech}x=-\operatorname{sech}x\tanh x$.

Assume \eqref{eq:unified-main-identity} for some $n\geq 1$.
Applying $\dfrac{d^2}{dx^2}$ and using
\eqref{eq:unified-G-second-derivative}, we get
\begin{align}
\dfrac{d^{2n+2-\alpha}}{dx^{2n+2-\alpha}}F_{\alpha,1-\alpha}
&=
(-1)^{n+1}
\sum_{q=1}^{n+1}
\dfrac{\rho_\alpha(q)}{(q-1)!}G_{\alpha,q}
\nonumber\\
&\quad\times
\sum_{p=q-1}^{n}
\rho_\alpha(p)(p+1-\alpha)!
C_\alpha(2n-p-\alpha,p-\alpha).
\label{eq:unified-double-sum}
\end{align}
For $q=1$, the formally occurring $p=0$ term vanishes: if
$\alpha=0$, then $C_0(2n,0)=0$, while if $\alpha=1$, then
$\rho_1(0)=0$.

To evaluate the inner sum, define
$X_p:=p!\,C_\alpha(2n-p+1-\alpha,p+1-\alpha)$. Using
\eqref{eq:unified-coefficient-recurrence} with $m=2n-p-\alpha$ and
$k=p+1-\alpha$, we obtain
\begin{equation}
X_p-X_{p+1}
=
\rho_\alpha(p)(p+1-\alpha)!
C_\alpha(2n-p-\alpha,p-\alpha).
\label{eq:unified-telescoping}
\end{equation}
Hence
\begin{align*}
&\sum_{p=q-1}^{n}
\rho_\alpha(p)(p+1-\alpha)!
C_\alpha(2n-p-\alpha,p-\alpha)
\\
&\qquad
=X_{q-1}-X_{n+1}
\\
&\qquad
=(q-1)!
C_\alpha(2n-q+2-\alpha,q-\alpha),
\end{align*}
because $X_{n+1}=(n+1)!\,C_\alpha(n-\alpha,n+2-\alpha)=0$.
Substituting this into \eqref{eq:unified-double-sum} gives
\[
\dfrac{d^{2n+2-\alpha}}{dx^{2n+2-\alpha}}F_{\alpha,1-\alpha}
=
(-1)^{n+1}
\sum_{q=1}^{n+1}
\rho_\alpha(q)
C_\alpha(2n-q+2-\alpha,q-\alpha)
G_{\alpha,q},
\]
which is \eqref{eq:unified-main-identity} with $n$ replaced by
$n+1$.
\end{proof}

\begin{corollary}
For every $n\geq 1$,
\begin{equation}
\boxed{
\dfrac{d^{2n}}{dx^{2n}}\tanh x
=
(-1)^n
\sum_{p=1}^{n}
T(2n-p,p)\,
\dfrac{d^p}{dx^p}\!\left(\tanh^{p+1}x\right)
}
\label{eq:main-tanh-identity}
\end{equation}
and
\begin{equation}
\boxed{
\dfrac{d^{2n-1}}{dx^{2n-1}}\operatorname{sech}x
=
(-1)^n
\sum_{p=1}^{n}
p\,S(2n-p-1,p-1)\,
\dfrac{d^{p-1}}{dx^{p-1}}\!\left(\operatorname{sech}x\,\tanh^p x\right).
}
\label{eq:main-sech-identity}
\end{equation}
\end{corollary}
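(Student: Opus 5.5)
The corollary is the unified identity \eqref{eq:unified-main-identity} specialized to $\alpha=0$ and to $\alpha=1$. The plan is to substitute each value of $\alpha$ and then unwind the definitions of $F_{\alpha,k}$ in \eqref{eq:F-def}, of $G_{\alpha,p}$ in \eqref{eq:G-def}, and of $\rho_\alpha$ in \eqref{eq:rho-def}. No new analytic input is needed.

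\emph{Case $\alpha=0$.} The left-hand side of \eqref{eq:unified-main-identity} becomes $\frac{d^{2n}}{dx^{2n}}F_{0,1}$, and $F_{0,1}=\tanh x$. On the right, $\rho_0(p)=1$ and $C_0(2n-p,p)=T(2n-p,p)$. Moreover $G_{0,p}=\frac{d^p}{dx^p}(\tanh^{p+1}x)$, as already recorded after \eqref{eq:G-def}. Together these give \eqref{eq:main-tanh-identity}.

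\emph{Case $\alpha=1$.} The order becomes $2n-1$, and $F_{1,0}=\operatorname{sech}x$. We have $\rho_1(p)=p$, and $C_1(2n-p-1,p-1)=S(2n-p-1,p-1)$. Finally $G_{1,p}=\frac{d^{p-1}}{dx^{p-1}}(\operatorname{sech}x\,\tanh^p x)$. Substituting these gives \eqref{eq:main-sech-identity}.

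The only step requiring care is the index bookkeeping. One check is that $p-\alpha\ge 0$ throughout the range $1\le p\le n$, so no vanishing-convention term is silently dropped. For $\alpha=1$ and $p=1$ the term involves $S(2n-2,0)$, the classical secant number; this is nonzero and is correctly included. I do not expect any real obstacle beyond this.
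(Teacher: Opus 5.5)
Your proposal is correct and is exactly the paper's proof: specialize the unified identity to $\alpha=0$ and $\alpha=1$, and read off $F_{\alpha,1-\alpha}$, $\rho_\alpha(p)$, $C_\alpha$ and $G_{\alpha,p}$ from their definitions. Your index bookkeeping, including the observation that $p-\alpha\geq0$ throughout $1\le p\le n$, is accurate.
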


\begin{proof}
Take $\alpha=0$ and $\alpha=1$, respectively, in
\eqref{eq:unified-main-identity}.
\end{proof}

\section{Integral representations for odd zeta and even beta values}
\subsection{Derivative integrals for hyperbolic functions}

\begin{proposition}
\label[proposition]{prop:derivative-integrals}
For every $n\in\mathbb{N}$,
\begin{equation}
\begin{dcases}
\displaystyle
\int_{0}^{\infty}
\frac{1}{x}
\frac{d^{\,2n-1}}{dx^{\,2n-1}}
\operatorname{sech}x\,dx
=
(-1)^n
2^{2n}(2n-1)!
\frac{\beta(2n)}{\pi^{2n-1}},
\\[12pt]
\displaystyle
\int_{0}^{\infty}
\frac{1}{x}
\frac{d^{\,2n}}{dx^{\,2n}}
\tanh x\,dx
=
(-1)^n
\bigl(2^{2n+1}-1\bigr)(2n)!
\frac{\zeta(2n+1)}{\pi^{2n}}.
\end{dcases}
\label{eq:sech-tanh-derivative-integrals}
\end{equation}
\end{proposition}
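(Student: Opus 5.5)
We compute both integrals through a Mellin-type regularization combined with explicit series expansions. Let $f$ be either $\operatorname{sech}$ or $\tanh$, and let $N$ be the order of differentiation ($N=2n-1$ or $2n$). The integral $\int_0^\infty x^{-1}f^{(N)}(x)\,dx$ converges at $0$, because $f^{(N)}$ is odd and therefore vanishes at $0$. It converges at $\infty$ because $f^{(N)}$ decays exponentially; for $N\ge1$ the constant limit of $\tanh$ is removed by differentiation. We therefore consider
\[
I(s):=\int_0^\infty x^{s-1}f^{(N)}(x)\,dx ,
\]
and take $s\to0$.

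The first step is to integrate by parts $N$ times for $\Re s>N$. All boundary terms vanish at both ends. This gives
\[
I(s)=(-1)^N (s-1)(s-2)\cdots(s-N)\int_0^\infty x^{s-N-1}g(x)\,dx ,
\]
where $g=\operatorname{sech}$ in the first case and $g=\tanh x-1$ in the second. For the $\tanh$ case it is cleaner to differentiate $\tanh x-1$, which has the same derivatives and decays at $\infty$. Near $0$, however, $\tanh x-1\to-1$, so some care is needed; an alternative is to use $1-\tanh x$ and track the strip of convergence.

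The second step evaluates the remaining Mellin transforms using the expansions
\[
\operatorname{sech}x=2\sum_{k\ge0}(-1)^k e^{-(2k+1)x},\qquad
1-\tanh x=2\sum_{k\ge1}(-1)^{k-1}e^{-2kx}.
\]
For $\Re t>0$ these yield
\[
\int_0^\infty x^{t-1}\operatorname{sech}x\,dx=2\Gamma(t)\beta(t),
\qquad
\int_0^\infty x^{t-1}(1-\tanh x)\,dx=2^{1-t}\Gamma(t)\eta(t),
\]
with $\eta(t)=(1-2^{1-t})\zeta(t)$. Both identities continue meromorphically. Analytic continuation then shows that $I(s)$, which is entire near $s=0$, equals the resulting product of gamma, zeta and beta factors there.

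The third step, which is the main obstacle, evaluates the limit $s\to0$ with $t=s-N$. At this point $\Gamma(t)$ has a simple pole at the nonpositive integer $-N$, and the polynomial $(s-1)\cdots(s-N)$ does not vanish at $s=0$. The apparent pole must therefore be cancelled by a trivial zero of $\beta(-N)$ or $\eta(-N)$. For $N=2n-1$ odd, $\beta(1-2n)=0$. For $N=2n$ even, $\zeta(-2n)=0$.

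The limit is consequently a derivative value. Explicitly, $\operatorname{Res}_{t=-N}\Gamma=(-1)^N/N!$. Combined with the product $(-1)^N N!$, this leaves $I(0)$ equal to a simple multiple of $\beta'(1-2n)$, respectively $\zeta'(-2n)$, times the elementary factors $2^{1+N}(1-2^{1+N})$ that appear in the $\tanh$ case.

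The final step converts these derivative values with the functional equations. For the beta function,
\[
\beta'(1-2n)=(-1)^{n+1}\frac{2^{2n-1}(2n-1)!}{\pi^{2n-1}}\beta(2n)\cdot\frac{\pi}{2}\cdot\frac{2}{\pi}\cdot(\text{normalizing constant}),
\]
where the constant follows from $\beta(1-s)=(2/\pi)^s\sin(\pi s/2)\Gamma(s)\beta(s)$ differentiated at $s=2n$. For the zeta function,
\[
\zeta'(-2n)=(-1)^n\frac{(2n)!}{2(2\pi)^{2n}}\zeta(2n+1).
\]
Substituting these values and collecting the powers of $2$ should produce exactly $(-1)^n2^{2n}(2n-1)!\,\beta(2n)/\pi^{2n-1}$ and $(-1)^n(2^{2n+1}-1)(2n)!\,\zeta(2n+1)/\pi^{2n}$. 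As a check on signs, the case $n=1$ can be verified numerically.
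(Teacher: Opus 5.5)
Your proposal is correct, and it takes a genuinely different route from the paper.

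\textbf{The paper's route.} The paper never computes these integrals directly. It imports from earlier work finite representations of the log-log polylogarithm integrals $\int_0^1 x^{-1}\Im\operatorname{Li}_{-2n}(ix)\ln\ln(1/x)\,dx$ and $\int_0^1 x^{-1}\operatorname{Li}_{-2n-1}(-x^2)\ln\ln(1/x)\,dx$. These are expressed as sums of hyperbolic integrals weighted by MacMahon and Eulerian numbers. Franssens' derivative formulae identify the weighted sums as $\cosh^{2n}t\,\frac{d^{2n-1}}{dt^{2n-1}}\operatorname{sech}t$ and $\cosh^{2n+1}t\,\frac{d^{2n}}{dt^{2n}}\tanh t$. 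The paper then imports closed-form evaluations of the same polylogarithm integrals.

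\textbf{Your route.} Your Mellin argument is self-contained and more transparent. Since $(s-1)\cdots(s-N)\Gamma(s-N)=\Gamma(s)$, you in fact obtain the full Mellin transforms on $\Re s>-1$:
\[
I(s)=2(-1)^N\Gamma(s)\,\beta(s-N),
\qquad
I(s)=-(-1)^N2^{1+N-s}\Gamma(s)\,\eta(s-N).
\]
The termwise Dirichlet-series step is first valid for $\Re t>1$ and then extends by continuation. This gives
\[
I(0)=-2\beta'(1-2n),
\qquad
I(0)=2^{2n+1}(2^{2n+1}-1)\zeta'(-2n).
\]
No separate residue bookkeeping for $\Gamma$ at $-N$ is needed. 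The trivial zeros explain conceptually why exactly $\beta(2n)$ and $\zeta(2n+1)$ appear. The paper's route buys only the link to its log-log integral framework, at the cost of resting entirely on external evaluations.

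\textbf{Loose ends to tighten.}
\begin{itemize}
\item Carry the sign from $\tanh x-1=-(1-\tanh x)$ explicitly. It is what produces $(-1)^n$ rather than $(-1)^{n+1}$ in the $\tanh$ case.
\item Your hedged ``normalizing constant'' in the $\beta'(1-2n)$ formula is simply $1$. Differentiating $\beta(1-s)=(2/\pi)^s\sin(\pi s/2)\Gamma(s)\beta(s)$ at $s=2n$ gives $\beta'(1-2n)=(-1)^{n+1}(2/\pi)^{2n-1}(2n-1)!\,\beta(2n)$.
\end{itemize}
With these, both closed forms in the statement come out exactly. For $n=1$ they reproduce $-4G/\pi$ and $-14\zeta(3)/\pi^2$.
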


\begin{proof}
We start from the finite hyperbolic representations obtained in
\cite[Eq.~(6.11)]{TallaWaffo2026}:
\begin{equation}
\begin{dcases}
\displaystyle
\int_{0}^{1}
\frac{\Im\!\left(\operatorname{Li}_{-2n}(ix)\right)}{x}
\ln\!\ln\frac{1}{x}\,dx
=
\frac{1}{2^{2n-1}}
\sum_{k=0}^{n-1}
(-1)^k
\left\langle{2n-1\atop k}\right\rangle^{B}
\int_{0}^{\infty}
\frac{\sinh((2n-2k-1)t)}
{t\cosh^{2n}t}\,dt,
\\[17pt]
\displaystyle
\int_{0}^{1}
\frac{\operatorname{Li}_{-2n-1}(-x^2)}{x}
\ln\!\ln\frac{1}{x}\,dx
=
-\frac{1}{2^{2n+1}}
\sum_{k=0}^{n-1}
(-1)^k
\left\langle{2n\atop k}\right\rangle
\int_{0}^{\infty}
\frac{\sinh((2n-2k-1)t)}
{t\cosh^{2n+1}t}\,dt.
\end{dcases}
\label{eq:polylog-hyperbolic-representations}
\end{equation}

On the other hand, the derivative formulae of Franssens
\cite[eqs.~(5.24) and~(5.27)]{Franssens2007}, specialized to the
orders $2n-1$ and $2n$, respectively, yield after pairing the
symmetric terms
\begin{equation}
\begin{dcases}
\displaystyle
\sum_{k=0}^{n-1}
(-1)^k
\left\langle{2n-1\atop k}\right\rangle^{B}
\sinh((2n-2k-1)t)
=
-2^{2n-2}\cosh^{2n}t\,
\frac{d^{\,2n-1}}{dt^{\,2n-1}}
\operatorname{sech}t,
\\[14pt]
\displaystyle
\sum_{k=0}^{n-1}
(-1)^k
\left\langle{2n\atop k}\right\rangle
\sinh((2n-2k-1)t)
=
-\frac{1}{2}\cosh^{2n+1}t\,
\frac{d^{\,2n}}{dt^{\,2n}}
\tanh t.
\end{dcases}
\label{eq:franssens-paired-derivative-sums}
\end{equation}

Since the sums in
\eqref{eq:polylog-hyperbolic-representations} are finite, we may
insert \eqref{eq:franssens-paired-derivative-sums} directly into
the corresponding integrals.  The powers of $\cosh t$ cancel,
and we obtain
\begin{equation}
\begin{dcases}
\displaystyle
\int_{0}^{1}
\frac{\Im\!\left(\operatorname{Li}_{-2n}(ix)\right)}{x}
\ln\!\ln\frac{1}{x}\,dx
=
-\frac{1}{2}
\int_{0}^{\infty}
\frac{1}{t}
\frac{d^{\,2n-1}}{dt^{\,2n-1}}
\operatorname{sech}t\,dt,
\\[17pt]
\displaystyle
\int_{0}^{1}
\frac{\operatorname{Li}_{-2n-1}(-x^2)}{x}
\ln\!\ln\frac{1}{x}\,dx
=
\frac{1}{2^{2n+2}}
\int_{0}^{\infty}
\frac{1}{t}
\frac{d^{\,2n}}{dt^{\,2n}}
\tanh t\,dt.
\end{dcases}
\label{eq:polylog-derivative-integrals}
\end{equation}

Finally, by \cite[Eq.~(6.9)]{TallaWaffo2026}\cite{TallaWaffo2025arxiv2511.02843},
\begin{equation}
\begin{dcases}
\displaystyle
\int_{0}^{1}
\frac{\Im\!\left(\operatorname{Li}_{-2n}(ix)\right)}{x}
\ln\!\ln\frac{1}{x}\,dx
=
(-1)^{n-1}
2^{2n-1}(2n-1)!
\frac{\beta(2n)}{\pi^{2n-1}},
\\[17pt]
\displaystyle
\int_{0}^{1}
\frac{\operatorname{Li}_{-2n-1}(-x^2)}{x}
\ln\!\ln\frac{1}{x}\,dx
=
(-1)^n
\frac{\bigl(2^{2n+1}-1\bigr)(2n)!}{2^{2n+2}}
\frac{\zeta(2n+1)}{\pi^{2n}}.
\end{dcases}
\label{eq:polylog-evaluations-for-derivatives}
\end{equation}

Comparing the first lines of
\eqref{eq:polylog-derivative-integrals} and
\eqref{eq:polylog-evaluations-for-derivatives} and simplifying algebraically give \eqref{eq:sech-tanh-derivative-integrals}.
\end{proof}

\begin{remark}
Strictly speaking, \cref{prop:derivative-integrals} may be found in the second version of this preprint \textnormal{\cite{Kyrion2025}} by Tobias Kyrion .
\end{remark}

\subsection{Power-integral representations}

\begin{proposition}
\label[proposition]{prop:zeta-beta-power-integrals}
For every $n\geq 1$,
\begin{equation}
\begin{dcases}
\displaystyle
\frac{\zeta(2n+1)}{\pi^{2n}}
=
\sum_{p=1}^{n}
\frac{p!\,T(2n-p,p)}
{\bigl(2^{2n+1}-1\bigr)(2n)!}
\int_{0}^{\infty}
\frac{\tanh^{p+1}x}{x^{p+1}}\,dx,
\\[16pt]
\displaystyle
\frac{\beta(2n)}{\pi^{2n-1}}
=
\sum_{p=1}^{n}
\frac{p!\,S(2n-p-1,p-1)}
{2^{2n}(2n-1)!}
\int_{0}^{\infty}
\frac{\operatorname{sech}x\,\tanh^{p}x}{x^{p}}\,dx.
\end{dcases}
\label{eq:zeta-beta-power-integrals}
\end{equation}
\end{proposition}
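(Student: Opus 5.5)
The plan is to combine the derivative identities \eqref{eq:main-tanh-identity} and \eqref{eq:main-sech-identity} with \cref{prop:derivative-integrals}, and then remove the derivatives by repeated integration by parts against $x^{-1}$. For the tangent case, divide \eqref{eq:main-tanh-identity} by $x$ and integrate over $(0,\infty)$. Since the sum is finite, the integral splits into
\[
\int_0^\infty \frac{1}{x}\frac{d^{2n}}{dx^{2n}}\tanh x\,dx=(-1)^n\sum_{p=1}^n T(2n-p,p)\int_0^\infty \frac1x\frac{d^p}{dx^p}\bigl(\tanh^{p+1}x\bigr)\,dx .
\]
Here each integral on the right converges. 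Indeed, $\frac{d^p}{dx^p}\tanh^{p+1}x$ vanishes to order $1$ at $x=0$, because $\tanh^{p+1}x=O(x^{p+1})$ and odd/even parity makes the integrand regular at $0$. It also decays exponentially as $x\to\infty$ for $p\ge1$.

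The key lemma is
\[
\int_0^\infty \frac1x\, h^{(p)}(x)\,dx=p!\int_0^\infty\frac{h(x)}{x^{p+1}}\,dx
\]
for $h$ smooth on $[0,\infty)$ with $h(x)=O(x^{p+1})$ at $0$ and with $h^{(j)}$ bounded, $h^{(j)}\to0$ exponentially for $1\le j\le p$. I will prove it by integrating by parts $p$ times. At each step we pass from $\int x^{-j}h^{(p-j+1)}$ to $j\int x^{-j-1}h^{(p-j)}$, with boundary term $[x^{-j}h^{(p-j)}]_0^\infty$. At $0$ this term vanishes since $h^{(p-j)}=O(x^{j+1})$.

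The one delicate point is the last step, at $\infty$. There $h=\tanh^{p+1}x\to1$ does not decay, but $x^{-p}h\to0$ anyway, so the boundary term is zero. For $j<p$, the derivatives $h^{(p-j)}$ decay exponentially. This bookkeeping of boundary terms, and the convergence of $\int_0^\infty x^{-p-1}\tanh^{p+1}x\,dx$ at both ends, is the main, though mild, obstacle.

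Applying the lemma with $h=\tanh^{p+1}$ gives
\[
(-1)^n\bigl(2^{2n+1}-1\bigr)(2n)!\,\frac{\zeta(2n+1)}{\pi^{2n}}=(-1)^n\sum_{p=1}^n p!\,T(2n-p,p)\int_0^\infty\frac{\tanh^{p+1}x}{x^{p+1}}\,dx,
\]
using the second line of \eqref{eq:sech-tanh-derivative-integrals}. Cancelling $(-1)^n$ and dividing yields the first formula.

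The secant case is identical with $h=\operatorname{sech}x\tanh^p x$ and $p-1$ derivatives. The lemma, with $p$ replaced by $p-1$, gives the factor $(p-1)!$. Combined with the coefficient $p$ in \eqref{eq:main-sech-identity}, this produces $p!$. Here $h=O(x^p)$ at $0$ and $h$ decays exponentially, so all boundary terms vanish. The case $p=1$ needs no integration by parts and gives $\int_0^\infty x^{-1}\operatorname{sech}x\tanh x\,dx$ directly. Finally, using the first line of \eqref{eq:sech-tanh-derivative-integrals} and dividing by $2^{2n}(2n-1)!$ completes the proof.
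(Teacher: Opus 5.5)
Your proposal is correct and follows essentially the same route as the paper: integrate \eqref{eq:main-tanh-identity} and \eqref{eq:main-sech-identity} against $x^{-1}$, remove the derivatives by repeated integration by parts to produce the factors $p!$ and $(p-1)!$, and then compare with \cref{prop:derivative-integrals}. The boundary terms vanish because of the small-$x$ vanishing orders and the decay or boundedness at infinity; your explicit handling of the last term $x^{-p}\tanh^{p+1}x\to0$ matches the paper's remark that $\tanh^{p+1}x$ is only bounded there.
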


\begin{proof}
Multiplying \eqref{eq:main-tanh-identity} and
\eqref{eq:main-sech-identity} by $x^{-1}$ and integrating over
$(0,\infty)$ gives
\begin{equation}
\begin{dcases}
\displaystyle
\int_{0}^{\infty}
\frac{1}{x}
\frac{d^{2n}}{dx^{2n}}\tanh x\,dx
=
(-1)^n
\sum_{p=1}^{n}
T(2n-p,p)
\int_{0}^{\infty}
\frac{1}{x}
\frac{d^{p}}{dx^{p}}
\bigl(\tanh^{p+1}x\bigr)\,dx,
\\[17pt]
\displaystyle
\int_{0}^{\infty}
\frac{1}{x}
\frac{d^{2n-1}}{dx^{2n-1}}
\operatorname{sech}x\,dx
=
(-1)^n
\sum_{p=1}^{n}
p\,S(2n-p-1,p-1)
\int_{0}^{\infty}
\frac{1}{x}
\frac{d^{p-1}}{dx^{p-1}}
\bigl(\operatorname{sech}x\,\tanh^{p}x\bigr)\,dx.
\end{dcases}
\label{eq:integrated-compact-identities}
\end{equation}

For $0\leq j\leq p$, as $x\to0^+$,
\(\frac{d^j}{dx^j}\bigl(\tanh^{p+1}x\bigr)=O\bigl(x^{p+1-j}\bigr)\),
whereas, for $0\leq j\leq p-1$,
\(\frac{d^j}{dx^j}\bigl(\operatorname{sech}x\,\tanh^{p}x\bigr)=O\bigl(x^{p-j}\bigr)\).
At infinity, all positive-order derivatives involved above decay
exponentially; moreover, $\tanh^{p+1}x$ is bounded and
$\operatorname{sech}x\,\tanh^p x$ decays exponentially.  Hence all
boundary terms arising below vanish.

Repeated integration by parts therefore yields
\begin{equation}
\begin{dcases}
\displaystyle
\int_{0}^{\infty}
\frac{1}{x}
\frac{d^{p}}{dx^{p}}
\bigl(\tanh^{p+1}x\bigr)\,dx
=
p!
\int_{0}^{\infty}
\frac{\tanh^{p+1}x}{x^{p+1}}\,dx,
\\[17pt]
\displaystyle
\int_{0}^{\infty}
\frac{1}{x}
\frac{d^{p-1}}{dx^{p-1}}
\bigl(\operatorname{sech}x\,\tanh^{p}x\bigr)\,dx
=
(p-1)!
\int_{0}^{\infty}
\frac{\operatorname{sech}x\,\tanh^{p}x}{x^{p}}\,dx.
\end{dcases}
\label{eq:repeated-integration-by-parts}
\end{equation}

Substituting \eqref{eq:repeated-integration-by-parts} into
\eqref{eq:integrated-compact-identities}, and using
$p(p-1)!=p!$, gives
\begin{equation}
\begin{dcases}
\displaystyle
\int_{0}^{\infty}
\frac{1}{x}
\frac{d^{2n}}{dx^{2n}}\tanh x\,dx
=
(-1)^n
\sum_{p=1}^{n}
p!\,T(2n-p,p)
\int_{0}^{\infty}
\frac{\tanh^{p+1}x}{x^{p+1}}\,dx,
\\[17pt]
\displaystyle
\int_{0}^{\infty}
\frac{1}{x}
\frac{d^{2n-1}}{dx^{2n-1}}
\operatorname{sech}x\,dx
=
(-1)^n
\sum_{p=1}^{n}
p!\,S(2n-p-1,p-1)
\int_{0}^{\infty}
\frac{\operatorname{sech}x\,\tanh^{p}x}{x^{p}}\,dx.
\end{dcases}
\label{eq:derivative-integrals-power-form}
\end{equation}

On the other hand, \cref{prop:derivative-integrals} gives
\begin{equation}
\begin{dcases}
\displaystyle
\int_{0}^{\infty}
\frac{1}{x}
\frac{d^{2n}}{dx^{2n}}\tanh x\,dx
=
(-1)^n
\bigl(2^{2n+1}-1\bigr)(2n)!
\frac{\zeta(2n+1)}{\pi^{2n}},
\\[17pt]
\displaystyle
\int_{0}^{\infty}
\frac{1}{x}
\frac{d^{2n-1}}{dx^{2n-1}}
\operatorname{sech}x\,dx
=
(-1)^n
2^{2n}(2n-1)!
\frac{\beta(2n)}{\pi^{2n-1}}.
\end{dcases}
\label{eq:derivative-integrals-evaluated}
\end{equation}

Comparing the corresponding lines of
\eqref{eq:derivative-integrals-power-form} and
\eqref{eq:derivative-integrals-evaluated}, cancelling $(-1)^n$,
and dividing by the respective prefactors gives
\eqref{eq:zeta-beta-power-integrals}.
\end{proof}

\section{The polynomial families $P_n$ and $Q_n$}
\subsection{Definition and integral representation}

The finite sums in \cref{prop:zeta-beta-power-integrals} naturally give rise to two polynomial families.  For every $n\geq1$, define
\[
P_n(y):=\sum_{p=1}^{n}p!\,T(2n-p,p)y^{p-1},
\qquad
Q_n(y):=\sum_{p=1}^{n}p!\,S(2n-p-1,p-1)y^{p-1}.
\]
For example,
\[
\begin{aligned}
P_1(y)&=1,
& P_2(y)&=2+4y,
& P_3(y)&=16+32y+36y^2,\\
Q_1(y)&=1,
& Q_2(y)&=1+2y,
& Q_3(y)&=5+10y+12y^2.
\end{aligned}
\]
We shall return later to the structural and coefficient properties of these
polynomial families.  With this notation, \cref{eq:zeta-beta-power-integrals}
takes the form
\begin{equation}
\begin{dcases}
\displaystyle
\frac{\zeta(2n+1)}{\pi^{2n}}
=
\frac{1}{\bigl(2^{2n+1}-1\bigr)(2n)!}
\int_{0}^{\infty}
\left(\frac{\tanh x}{x}\right)^2
P_n\!\left(\frac{\tanh x}{x}\right)\,dx,
\\[16pt]
\displaystyle
\frac{\beta(2n)}{\pi^{2n-1}}
=
\frac{1}{2^{2n}(2n-1)!}
\int_{0}^{\infty}
\operatorname{sech}x\,\frac{\tanh x}{x}
Q_n\!\left(\frac{\tanh x}{x}\right)\,dx.
\end{dcases}
\label{eq:zeta-beta-polynomial-integrals}
\end{equation}
Thus \cref{prop:zeta-beta-power-integrals} exhibits the normalized odd zeta
and even beta values through the two polynomial families $P_n$ and $Q_n$.

\section{Arithmetic and coefficient properties of $P_n$ and $Q_n$}

\subsection{Edge coefficients}

\begin{proposition}[The first and last coefficients of \(P_n\) and \(Q_n\)]
\label[proposition]{prop:PQ-edge-coefficients}
For every \(n\geq2\), the first two and the last two coefficients of
\(P_n\) are
\begin{equation}
\begin{aligned}
P_n(0)
&=T(2n-1,1)
=(-1)^{n-1}\frac{2^{2n}(2^{2n}-1)}{2n}\,B_{2n},\\
[y]P_n(y)
&=2P_n(0),\\
[y^{n-2}]P_n(y)
&=\frac{n!(n-1)!(n^2-1)}{3},\\
[y^{n-1}]P_n(y)
&=(n!)^2,
\end{aligned}
\label{eq:P-edge-coefficients}
\end{equation}
and those of \(Q_n\) are
\begin{equation}
\begin{aligned}
Q_n(0)
&=S(2n-2,0)
=(-1)^{n-1}E_{2n-2},\\
[y]Q_n(y)
&=2Q_n(0),\\
[y^{n-2}]Q_n(y)
&=\frac{n!(n-1)!(2n-1)}{6},\\
[y^{n-1}]Q_n(y)
&=n!(n-1)!.
\end{aligned}
\label{eq:Q-edge-coefficients}
\end{equation}
Here \(B_{2n}\) and \(E_{2n}\) denote the Bernoulli and Euler numbers,
respectively, with
\[
\sec x=\sum_{m\geq0}(-1)^mE_{2m}\frac{x^{2m}}{(2m)!}.
\]
In particular, all these edge coefficients are positive integers.
For \(n=1\), one simply has \(P_1(y)=Q_1(y)=1\).
\end{proposition}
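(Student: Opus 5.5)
The plan is to read each edge coefficient off the definitions $P_n(y)=\sum_{p}p!\,T(2n-p,p)y^{p-1}$ and $Q_n(y)=\sum_p p!\,S(2n-p-1,p-1)y^{p-1}$. Each one becomes a single higher-order tangent or secant number near the diagonal or near $k=0,1$, which we evaluate from \eqref{eq:unified-coefficients} or from \eqref{eq:unified-coefficient-recurrence}.

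\emph{Constant and linear terms.} For $p=1$ we get $P_n(0)=T(2n-1,1)$, the classical tangent number. The stated Bernoulli formula is the standard Taylor expansion
\[
\tan z=\sum_{n\ge1}(-1)^{n-1}\frac{2^{2n}(2^{2n}-1)B_{2n}}{(2n)!}z^{2n-1},
\]
so we only need to multiply by $(2n-1)!$. Similarly $Q_n(0)=S(2n-2,0)=(2n-2)!\,[z^{2n-2}]\sec z=(-1)^{n-1}E_{2n-2}$ by the chosen normalization.

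For the linear terms, $[y]P_n=2\,T(2n-2,2)$. Apply \eqref{eq:unified-coefficient-recurrence} with $\alpha=0$, $k=1$, $m=2n-2$: this gives $T(2n-1,1)=T(2n-2,0)+T(2n-2,2)=T(2n-2,2)$, since $T(2n-2,0)=0$ for $n\ge2$. Hence $[y]P_n=2P_n(0)$. Likewise $[y]Q_n=2S(2n-3,1)$, and the recurrence with $\alpha=1$, $k=0$ gives $S(2n-2,0)=S(2n-3,1)$. (For $n=2$ the linear coefficient coincides with the top one, which is consistent.)

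\emph{Top two terms.} Here $[y^{n-1}]P_n=n!\,T(n,n)$ and $[y^{n-1}]Q_n=n!\,S(n-1,n-1)$. Since $\tan^n z=z^n+\cdots$ and $\sec z\tan^{n-1}z=z^{n-1}+\cdots$, we have $T(n,n)=n!$ and $S(n-1,n-1)=(n-1)!$.

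Next, $[y^{n-2}]P_n=(n-1)!\,T(n+1,n-1)$ and $[y^{n-2}]Q_n=(n-1)!\,S(n,n-2)$, so we need the subdiagonal coefficients. From $\tan z=z+z^3/3+\cdots$ we get $\tan^k z=z^k\bigl(1+\tfrac{k}{3}z^2+\cdots\bigr)$, hence $T(k+2,k)=(k+2)!\,k/3$. With $k=n-1$ this gives $(n-1)!\,(n+1)!\,(n-1)/3=n!(n-1)!(n^2-1)/3$.

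Using also $\sec z=1+z^2/2+\cdots$, we have $\sec z\tan^k z=z^k\bigl(1+(\tfrac k3+\tfrac12)z^2+\cdots\bigr)$, so $S(k+2,k)=(k+2)!\,(2k+3)/6$. With $k=n-2$ this gives $(n-1)!\,n!\,(2n-1)/6$. These expansions need $n\ge2$, with the obvious interpretation at $n=2$.

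\emph{Integrality and positivity.} Every coefficient is $p!$ times an integer, since $T(m,k)$ and $S(m,k)$ are nonnegative integers. They are integers because \eqref{eq:unified-coefficient-recurrence} has integer coefficients and starts from $C_\alpha(0,0)=1$. They are positive because the recurrence has nonnegative coefficients and the relevant entries have $m\equiv k\pmod 2$, $m\ge k$, so they lie on a nonvanishing lattice path. Integrality of the closed forms is also visible directly: $3\mid (n-1)n(n+1)$, and $n!(n-1)!(2n-1)/6=(n-1)!\,S(n,n-2)$ is an integer.

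The main obstacle is bookkeeping rather than substance. The points to check carefully are the index shifts in the definition of $Q_n$ (the top coefficient has $p=n$, giving $S(n-1,n-1)$) and the degenerate case $n=2$, where the second and the penultimate coefficients coincide. For $n=2$ the formulas give $[y]P_2=4=2P_2(0)$ and $(n!)^2=4$, and $[y]Q_2=2=2Q_2(0)=n!(n-1)!$, in agreement with the examples $P_2(y)=2+4y$ and $Q_2(y)=1+2y$.
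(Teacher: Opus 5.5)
Your proposal is correct and follows the paper's proof essentially step by step. Like the paper, you take the constant terms from the classical tangent and secant expansions, obtain the linear terms from the recurrence at $(m,k)=(2n-2,1)$ and $(2n-3,0)$, and read off the top two coefficients from $\tan z=z+z^3/3+\cdots$ and $\sec z=1+z^2/2+\cdots$. Your integrality and positivity remark goes slightly beyond what the paper writes out, but the lattice-path criterion as you phrase it needs $k\geq1$ when $\alpha=0$, since $T(m,0)=0$ for $m\geq1$; all the relevant $T$-entries satisfy this, so the argument stands.
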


\begin{proof}
The constant terms follow immediately from the definitions:
\(P_n(0)=T(2n-1,1)\) and \(Q_n(0)=S(2n-2,0)\).
Using the coefficient recurrence with \((m,k)=(2n-2,1)\) and
\((m,k)=(2n-3,0)\), respectively, gives
\(T(2n-1,1)=T(2n-2,2)\) and
\(S(2n-2,0)=S(2n-3,1)\). Hence
\([y]P_n(y)=2!\,T(2n-2,2)=2P_n(0)\) and
\([y]Q_n(y)=2!\,S(2n-3,1)=2Q_n(0)\).
The classical expansions
\[
\tan x
=\sum_{m\geq1}(-1)^{m-1}
\frac{2^{2m}(2^{2m}-1)B_{2m}}{(2m)!}\,x^{2m-1},
\qquad
\sec x
=\sum_{m\geq0}(-1)^mE_{2m}\frac{x^{2m}}{(2m)!},
\]
then yield the stated Bernoulli and Euler forms of \(P_n(0)\) and
\(Q_n(0)\).

For the last two coefficients, use
\[
\tan x=x+\frac{x^3}{3}+O(x^5),
\qquad
\sec x=1+\frac{x^2}{2}+O(x^4).
\]
It follows that
\[
\begin{aligned}
\tan^n x
&=x^n+O(x^{n+2}),\\
\tan^{n-1}x
&=x^{n-1}+\frac{n-1}{3}x^{n+1}+O(x^{n+3}),\\
\sec x\,\tan^{n-1}x
&=x^{n-1}+O(x^{n+1}),\\
\sec x\,\tan^{n-2}x
&=x^{n-2}+\frac{2n-1}{6}x^n+O(x^{n+2}).
\end{aligned}
\]
Comparison with the defining exponential generating functions gives
\[
\begin{aligned}
T(n,n)&=n!,
&T(n+1,n-1)&=\frac{(n+1)!(n-1)}{3},\\
S(n-1,n-1)&=(n-1)!,
&S(n,n-2)&=\frac{n!(2n-1)}{6}.
\end{aligned}
\]
Multiplying by the factorial prefactors in the definitions of \(P_n\) and
\(Q_n\) proves \eqref{eq:P-edge-coefficients} and
\eqref{eq:Q-edge-coefficients}.
\end{proof}

\subsection{Exact $2$-adic content of $P_n$}

For a formal power series \(f(x)\), we write \([x^m]f(x)\) for the
coefficient of \(x^m\).  For constant terms we use the more natural
notation \(f(0)\) whenever this improves readability.

The strong divisibility of the classical tangent numbers by powers of two is
closely related to the arithmetic of the Genocchi numbers; see, for example,
\cite{knuth-buckholtz1967,Gessel1981}.  The following result determines the
exact common power of two in the coefficients of $P_n$.  Its proof combines
the classical tangent--Bernoulli formula with the von Staudt--Clausen theorem
and a parity argument based on Lagrange inversion.

\begin{theorem}[Exact $2$-adic content of $P_n$]
\label[theorem]{thm:Pn-exact-2-adic-content}
Let $\nu_2$ denote the $2$-adic valuation on $\mathbb{Q}^{\times}$, normalized
by $\nu_2(2)=1$, and let $s_2(m)$ be the sum of the binary digits of the
nonnegative integer $m$. Put \(L_n:=\lfloor\log_2 n\rfloor+1\). Then, for every $n\geq1$,
\begin{equation}
\boxed{
\min_{1\leq p\leq n}
\nu_2\!\left(p!\,T(2n-p,p)\right)
=
2n-s_2(n)-L_n.
}
\label{eq:Pn-exact-2-adic-content}
\end{equation}
Moreover, the minimum is attained at
\begin{equation}
    p_n:=2n-2^{L_n}+1.
    \label{eq:Pn-minimizing-index}
\end{equation}
Consequently, $2^{\,2n-s_2(n)-L_n}$ is the largest power of two dividing
every coefficient of $P_n(y)$.
\end{theorem}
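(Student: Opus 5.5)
The plan is to reduce everything to a $2$-adic statement about ordinary power-series coefficients of $\tan^p x$, and then to control those coefficients by Lagrange inversion. Write $m:=2n-p$, so that $m\equiv p\pmod 2$, and put $c_{m,p}:=[x^m]\tan^p x$. Since $T(m,p)=m!\,c_{m,p}$, Legendre's formula $\nu_2(k!)=k-s_2(k)$ gives
\[
\nu_2\bigl(p!\,T(2n-p,p)\bigr)=2n-s_2(p)-s_2(2n-p)+\nu_2(c_{2n-p,p}).
\]
The theorem is therefore equivalent to two claims:
\[
\nu_2(c_{2n-p,p})\ \geq\ s_2(p)+s_2(2n-p)-s_2(n)-L_n \quad\text{for all } 1\leq p\leq n,
\]
with equality at $p=p_n$.

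At $p_n$ the bookkeeping is simple. We have $2n-p_n=2^{L_n}-1$, so $s_2(2n-p_n)=L_n$. Since $2^{L_n}\leq 2n+1<2^{L_n+1}$, removing the leading bit of $2n+1$ gives $s_2(p_n)=s_2(2n+1)-1=s_2(n)$. Hence equality at $p_n$ amounts to showing that $c_{2^{L_n}-1,p_n}$ is a $2$-adic unit.

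For the coefficients I will use Lagrange inversion for the compositional inverse $\arctan$:
\[
c_{m,p}=\frac{p}{m}\,[w^{m-p}]\Bigl(\frac{w}{\arctan w}\Bigr)^{m}.
\]
Because $\arctan w/w=\sum_j(-1)^jw^{2j}/(2j+1)$ has only odd denominators, $w/\arctan w=1+w^2h_1(w^2)$ with $h_1\in\mathbb Z_{(2)}[[w^2]]$ and $h_1(0)=1/3$ a unit. Reducing modulo $2$ gives $\arctan w/w\equiv(1-w^2)^{-1}$, hence $w/\arctan w\equiv 1+w^2$. Therefore, with $r:=n-p$,
\[
[w^{2r}]\Bigl(\frac{w}{\arctan w}\Bigr)^m\equiv\binom{m}{r}\pmod 2 .
\]
For the equality case, take $m=2^{L}-1$. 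Then $\binom{m}{r}$ is odd for every $r$ by Lucas, and $p_n$, $m$ are both odd, so $\nu_2(p/m)=0$. This gives $\nu_2(c_{m,p_n})=0$, as required.

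For the lower bound in general, I will expand
\[
(1+w^2h_1)^m=\sum_{j\leq r}\binom{m}{j}w^{2j}h_1^{\,j}
\]
and use Kummer's theorem, $\nu_2\binom{m}{j}=s_2(j)+s_2(m-j)-s_2(m)$, to get
\[
\nu_2(c_{m,p})\ \geq\ \nu_2(p)-\nu_2(m)+\min_{0\leq j\leq r}\bigl(s_2(j)+s_2(m-j)-s_2(m)\bigr).
\]
It then remains to verify the purely digital inequality
\[
\nu_2(p)-\nu_2(m)+\min_{j\leq r}\bigl(s_2(j)+s_2(m-j)-s_2(m)\bigr)\ \geq\ s_2(p)+s_2(m)-s_2(n)-L_n,
\]
where $m=2n-p$ and $r=n-p$. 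To do this I will rewrite $s_2(p)-\nu_2(p)=s_2(p-1)+1-2\nu_2(p)$, or pass to $p-1$ and $m+1$ as appropriate, and count carries in the additions $p+m=2n$ and $j+(m-j)=m$. The bound $L_n$ enters as the number of binary digits available, since at most $L_n$ carries occur below $2^{L_n}$.

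This carry inequality is the step I expect to be the main obstacle. The crude Kummer bound discards cancellation among the terms $j\leq r$, and it may be too weak for some $p$, particularly when $p$ and $m$ are even and the factor $p/m$ changes the valuation. If it fails, my fallback is to treat even $p$ through the coefficient recurrence \eqref{eq:unified-coefficient-recurrence}. That recurrence expresses $T(2n-p,p)$ through neighbouring entries, and the intention is to propagate lower bounds from odd $p$. As a further check on the upper side, $p=1$ recovers the classical value $\nu_2(T(2n-1,1))=2n-2-\nu_2(n)$, obtained from the Bernoulli form in \cref{prop:PQ-edge-coefficients} together with von Staudt--Clausen, and this is consistent with the bound.
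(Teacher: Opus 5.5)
The gap is the lower bound, which is half of the theorem. Your equality argument at $p_n$ is correct and is essentially the paper's. The congruence $w/\arctan w\equiv 1+w^2 \pmod 2$ is the same input as the paper's functional equation $F=x(1+F^2)$ over $\mathbb F_2$. Lagrange inversion together with the oddness of every $\binom{2^{L_n}-1}{r}$ then makes $c_{2^{L_n}-1,p_n}$ a $2$-adic unit.

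\emph{The gap.} For every $1\le p\le n$ you reduce the lower bound to a digit inequality, but you do not prove it. You suspect it may fail and offer only an unspecified propagation through the coefficient recurrence. As written, the proposal therefore shows only that the minimum is at most $2n-s_2(n)-L_n$.

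\emph{The missing idea: $c_{m,p}$ is $2$-integral.} Since $\arctan w\in w+w^{3}\mathbb Z_{(2)}[[w]]$, and the compositional inverse of a series with unit linear coefficient has coefficients in the same ring, we get $\tan x\in\mathbb Z_{(2)}[[x]]$. Hence $\nu_2(c_{m,p})\ge 0$ for all $m,p$. The factor $p/m$ in the Lagrange formula is an artifact and is the wrong tool for a lower bound. The paper obtains the same integrality from the tangent--Bernoulli formula and von Staudt--Clausen; your $\arctan$ route is more elementary.

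\emph{Finishing the bound.} It then suffices that $s_2(p)+s_2(2n-p)-s_2(n)-L_n\le 0$. Since $s_2(2n)=s_2(n)$, the quantity $s_2(p)+s_2(2n-p)-s_2(n)$ is the number of carries in the binary addition $p+(2n-p)=2n$. Because $p<2^{L_n}$ and $2n<2^{L_n+1}$, no carry can leave position $L_n$ or higher, so there are at most $L_n$ carries. This, rather than ``digits below $2^{L_n}$'', is the justification your count needs.

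\emph{Your crude estimate also suffices.} The $j=0$ term has Kummer value $0$ and all values are nonnegative, so your bound is exactly $\nu_2(c_{m,p})\ge\nu_2(p)-\nu_2(m)$. The required inequality then reads: the number of carries is at most $L_n+\nu_2(p)-\nu_2(m)$. Put $a:=\nu_2(p)$ and $b:=\nu_2(m)$. If $b\le a$, the carry bound $L_n$ already suffices. If $b>a$, then:
\begin{itemize}
\item both summands vanish below position $a$;
\item position $a$ has bits $1$ and $0$ with no incoming carry;
\item $m$ has zeros in positions $a+1,\dots,b-1$.
\end{itemize}
Hence no carry leaves any position below $b$, and there are at most $L_n-b\le L_n+a-b$ carries. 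Either way, this step has to be carried out for the proof to be complete.
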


\begin{proof}
Write \(\tan x=\sum_{m\geq1}c_mx^{2m-1}\), where
\(c_m=T(2m-1,1)/(2m-1)!\). The classical tangent--Bernoulli formula, together with the
von Staudt--Clausen theorem, gives
\[
T(2m-1,1)
=
(-1)^{m-1}\frac{2^{2m}(2^{2m}-1)}{2m}B_{2m},
\qquad
\nu_2(B_{2m})=-1;
\]
see \cite{knuth-buckholtz1967} and
\cite[pp.~233--237]{IrelandRosen1990}, respectively.  Hence
\begin{equation}
\nu_2\!\left(T(2m-1,1)\right)
=2m-2-\nu_2(m).
\label{eq:tangent-number-2-adic-valuation}
\end{equation}
Legendre's formula in base two gives
\(\nu_2(r!)=\sum_{j\geq1}\lfloor r/2^j\rfloor=r-s_2(r)\). Therefore
\begin{align}
\nu_2(c_m)
&=2m-2-\nu_2(m)-\bigl((2m-1)-s_2(2m-1)\bigr)\nonumber\\
&=s_2(m-1)-\nu_2(m),
\label{eq:ordinary-tangent-coefficient-valuation}
\end{align}
where we used $s_2(2m-1)=s_2(m-1)+1$, since
$2m-1=2(m-1)+1$. If $m=2^a u$ with $u$ odd, then
\(m-1=2^a(u-1)+(2^a-1)\), and the two summands occupy disjoint blocks
of binary digits. Thus \(s_2(m-1)=a+s_2(u-1)\), so that
\begin{equation}
\boxed{
\nu_2(c_m)=s_2(u-1)\geq0.
}
\label{eq:ordinary-tangent-coefficient-unit-criterion}
\end{equation}
In particular, $c_m$ is a $2$-adic unit if and only if $m$ is a power of
$2$. If \(\mathbb Z_{(2)}:=\{a/b\in\mathbb Q:b\text{ is odd}\}\), then $\tan x\in\mathbb Z_{(2)}[[x]]$, and reduction modulo $2$ gives
\begin{equation}
\tan x\equiv
F(x):=\sum_{j\geq0}x^{2^{j+1}-1}
=x+x^3+x^7+x^{15}+\cdots
\pmod{2\mathbb Z_{(2)}[[x]]}.
\label{eq:tangent-mod-2-series}
\end{equation}

Now fix $1\leq p\leq n$ and put $N:=2n-p$. Since
\(T(N,p)/N!=[x^N]\tan^p x\in\mathbb Z_{(2)}\), we have
\(\nu_2(T(N,p))\geq\nu_2(N!)=N-s_2(N)\). Thus, with $a_{n,p}:=p!\,T(2n-p,p)$,
\begin{equation}
\nu_2(a_{n,p})
\geq
2n-s_2(p)-s_2(2n-p).
\label{eq:Pn-coefficient-lower-bound-digit-sums}
\end{equation}

We next bound the two binary digit sums.  Set $q:=2n-p$, so that
$p+q=2n$.  In a binary addition, each carry decreases the total digit sum by
one. Consequently, \(s_2(p)+s_2(q)-s_2(2n)\) is the total number of carries in the addition of $p$ and $q$.  Since
$p<2^{L_n}$, $q<2^{L_n+1}$, and $p+q=2n<2^{L_n+1}$, there is no carry
from binary position $L_n$ to position $L_n+1$.  Thus carries can occur only
out of positions $0,1,\ldots,L_n-1$, and there are at most $L_n$ of them.  As
$s_2(2n)=s_2(n)$, it follows that
\begin{equation}
    s_2(p)+s_2(2n-p)\leq s_2(n)+L_n.
    \label{eq:binary-digit-sum-bound}
\end{equation}
Combining \eqref{eq:Pn-coefficient-lower-bound-digit-sums} and
\eqref{eq:binary-digit-sum-bound} yields
\begin{equation}
\nu_2(a_{n,p})\geq 2n-s_2(n)-L_n
\qquad (1\leq p\leq n).
\label{eq:Pn-uniform-2-adic-lower-bound}
\end{equation}

It remains to show that equality occurs.  Put
\[
p_*:=2n-2^{L_n}+1,
\qquad
N_*:=2n-p_*=2^{L_n}-1.
\]
Since $2^{L_n-1}\leq n\leq2^{L_n}-1$, one has
$1\leq p_*\leq n\leq N_*$.
By \eqref{eq:tangent-mod-2-series}, it suffices to prove that
$[x^{N_*}]F(x)^{p_*}$ is nonzero over $\mathbb F_2$.  In
$\mathbb F_2[[x]]$ the series $F$ satisfies
\begin{equation}
    F=x(1+F^2).
    \label{eq:F-functional-equation}
\end{equation}
Indeed, squaring $F$ in characteristic two simply doubles all exponents.
Let $H\in x\mathbb Z[[x]]$ be the unique formal power series satisfying
\(H=x(1+H^2)\). Its reduction modulo $2$ satisfies \eqref{eq:F-functional-equation}; by
uniqueness of the solution in $x\mathbb F_2[[x]]$, it is precisely $F$.
Lagrange inversion, in the form of
\cite[Theorem~2.1.1]{Gessel2016}, therefore gives, for $N\geq p\geq1$
with $N-p$ even,
\begin{equation}
[x^N]H^p
=
\frac{p}{N}[t^{N-p}](1+t^2)^N
=
\frac{p}{N}\binom{N}{(N-p)/2}.
\label{eq:Lagrange-H-power}
\end{equation}

For $N=N_*=2^{L_n}-1$, every binomial coefficient
$\binom{N_*}{j}$ is odd. Indeed, in $\mathbb F_2[t]$,
\((1+t)^{N_*}(1+t)=(1+t)^{2^{L_n}}=1+t^{2^{L_n}}\), whence
\((1+t)^{N_*}=1+t+\cdots+t^{N_*}\). Both $p_*$ and $N_*$ are odd.  Hence the right-hand side of
\eqref{eq:Lagrange-H-power}, with $(N,p)=(N_*,p_*)$, is a $2$-adic unit.
Since $H\equiv F\equiv\tan x\pmod2$, we obtain
\(\nu_2([x^{N_*}]\tan^{p_*}x)=0\), and therefore
\begin{equation}
\nu_2\!\left(T(N_*,p_*)\right)=\nu_2(N_*!).
\label{eq:higher-tangent-equality-case}
\end{equation}

Finally, write \(n=2^{L_n-1}+r\), where
\(0\leq r<2^{L_n-1}\). Then $p_*=2r+1$, so
\(s_2(p_*)=1+s_2(r)=s_2(n)\) and \(s_2(N_*)=L_n\). Using \eqref{eq:higher-tangent-equality-case} and Legendre's formula once
more, we find
\begin{align*}
\nu_2\!\left(p_*!\,T(N_*,p_*)\right)
&=\nu_2(p_*!)+\nu_2(N_*!)\\
&=p_*-s_2(p_*)+N_*-s_2(N_*)\\
&=2n-s_2(n)-L_n.
\end{align*}
Together with \eqref{eq:Pn-uniform-2-adic-lower-bound}, this proves
\eqref{eq:Pn-exact-2-adic-content} and shows that the minimum is attained at
$p_*=p_n$.
\end{proof}

\begin{corollary}
\label[corollary]{cor:Pn-2-adic-content-recurrence}
If \(e_n:=2n-s_2(n)-\lfloor\log_2 n\rfloor-1\), then $e_1=0$ and, for every $n\geq2$,
\begin{equation}
    e_n=n-1+e_{\lfloor n/2\rfloor}.
    \label{eq:Pn-2-adic-content-recurrence}
\end{equation}
\end{corollary}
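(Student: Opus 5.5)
The plan is to verify the recurrence directly from the closed form of $e_n$, splitting $n\geq2$ by parity.

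For $e_1$, note that $s_2(1)=1$ and $\lfloor\log_2 1\rfloor=0$, so $e_1=2-1-0-1=0$.

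For $n\geq2$ write $m:=\lfloor n/2\rfloor\geq1$. Then $n=2m$ or $n=2m+1$. In both cases the binary expansion of $n$ is that of $m$ followed by one extra digit $b\in\{0,1\}$, with $n=2m+b$. Hence
\[
s_2(n)=s_2(m)+b,
\qquad
\lfloor\log_2 n\rfloor=\lfloor\log_2 m\rfloor+1 .
\]
The second identity holds because, from $2^k\leq m<2^{k+1}$, one gets $2^{k+1}\leq 2m+b<2^{k+2}$.

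Substituting these into the definition of $e_n$ gives
\[
e_n=2(2m+b)-s_2(m)-b-\lfloor\log_2 m\rfloor-2.
\]
The definition of $e_m$ gives
\[
e_m=2m-s_2(m)-\lfloor\log_2 m\rfloor-1.
\]
Subtracting, we obtain $e_n-e_m=2m+b-1=n-1$, which is \eqref{eq:Pn-2-adic-content-recurrence}.

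None of these steps is a real obstacle. The only point needing care is the floor-logarithm shift, which rests on $n\geq2$ forcing $m\geq1$, so that $\log_2 m$ is defined. As a quick check against \cref{thm:Pn-exact-2-adic-content}: $e_2=1$ matches $\operatorname{cont}(P_2)=2$, and $e_3=3$ matches $P_3=16+32y+36y^2$, whose content is $4\cdot\gcd(4,8,9)$. Since $\gcd(4,8,9)=1$, this content is $4=2^2$, not $2^3$.

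The $n=3$ check therefore does not match. It suggests recomputing $e_3$ from the closed form with $L_3=2$ and $s_2(3)=2$, which gives $e_3=6-2-2=2$. So $e_3=2$, and the earlier value was a slip. The recurrence check then reads $e_3=2+e_1=2$, consistent with the content of $P_3$.
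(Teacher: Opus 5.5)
Your proof is correct and is essentially the paper's argument: write $n=2m+b$ with $m=\lfloor n/2\rfloor\geq1$, use $s_2(n)=s_2(m)+b$ and $\lfloor\log_2 n\rfloor=\lfloor\log_2 m\rfloor+1$, and substitute into the closed form. The closing numerical check is not part of the proof; your corrected value $e_3=2$ is the right one and agrees with $\operatorname{cont}(P_3)=4$.
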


\begin{proof}
Write $n=2m+\varepsilon$, where $m=\lfloor n/2\rfloor$ and
$\varepsilon\in\{0,1\}$. For $n\geq2$,
\(s_2(n)=s_2(m)+\varepsilon\) and
\(\lfloor\log_2 n\rfloor=\lfloor\log_2 m\rfloor+1\).
Substitution into the closed formula gives \(e_n=n-1+e_m\).
\end{proof}

\subsection{Odd common divisors and the exact content of $P_n$}

We now determine the odd part of the content of $P_n$.  The argument is
parallel in spirit to the proof of the primitivity of $Q_n$, but uses the
ordinary tangent numbers and Kummer's congruence for Bernoulli numbers.

\begin{proposition}[No odd common prime divisor for $P_n$]
\label[proposition]{prop:Pn-no-odd-common-prime}
For every $n\geq1$, no odd prime divides all coefficients of $P_n$.
Equivalently, if \(a_{n,p}:=p!\,T(2n-p,p)\) for $1\leq p\leq n$, then
\(q\nmid\gcd_{1\leq p\leq n}\{a_{n,p}\}\) for every odd prime $q$.
\end{proposition}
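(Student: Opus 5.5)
The plan is to fix an odd prime $q$ and show that some coefficient $a_{n,p}=p!\,T(2n-p,p)$ with $1\le p\le n$ is prime to $q$, splitting according to the size of $q$ relative to $n$. If $q>n$, \cref{prop:PQ-edge-coefficients} gives the leading coefficient $a_{n,n}=(n!)^2$, which is prime to $q$, and we are done. So the real case is $q\le n$. There I would only use indices $1\le p\le q-1$, for which $p!$ is a $q$-adic unit. The claim then reduces to showing that the numbers $T(2n-p,p)$, $1\le p\le q-1$, do not all vanish modulo $q$. These indices lie in $[1,n]$, so they are legitimate coefficients of $P_n$.

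To handle these residues I would reduce the coefficient recurrence \eqref{eq:unified-coefficient-recurrence} (with $\alpha=0$) modulo $q$. For $\alpha=0$ it reads $T(m+1,k)=k\,T(m,k-1)+k\,T(m,k+1)$. Taking $k=q$ shows $T(m+1,q)\equiv0\pmod q$ for every $m\ge0$. By induction (using the multiplier $k$ again) one gets $T(m,k)\equiv0$ whenever $q\mid k$ and $m\ge1$. Hence the vector $v_m:=(T(m,k)\bmod q)_{1\le k\le q-1}\in\mathbb F_q^{\,q-1}$ evolves autonomously as $v_{m+1}=Mv_m$, with $v_1=e_1$. Here $M$ is the tridiagonal matrix with entries $M_{k,k-1}=M_{k,k+1}=k$ and the boundary terms (the index $0$, and the index $q$) dropped. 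Thus
\[
a_{n,p}\equiv p!\,\bigl(M^{2n-p-1}e_1\bigr)_p \pmod q ,\qquad 1\le p\le q-1 .
\]
The structural fact I want from this is that $M$ is invertible over $\mathbb F_q$, since its subdiagonal entries $2,\dots,q-1$ are nonzero. Consequently the orbit $M^je_1$ never vanishes, and it is periodic in $j$.

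The main obstacle is passing from "the orbit never vanishes" to "the anti-diagonal entries $(M^{2n-1-p}e_1)_p$ do not all vanish". This is the step I expect to be hardest. My first attempt would be to isolate the extreme index $p=1$ and use the ordinary tangent numbers, as the surrounding text suggests. By \cref{prop:PQ-edge-coefficients}, $a_{n,1}=T(2n-1,1)=(-1)^{n-1}2^{2n}(2^{2n}-1)B_{2n}/(2n)$.

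If $(q-1)\mid 2n$, von Staudt--Clausen gives $\nu_q(B_{2n})=-1$. By Fermat's little theorem, $q\mid 2^{2n}-1$, and the lifting-the-exponent lemma gives $\nu_q(2^{2n}-1)=\nu_q(2^{q-1}-1)+\nu_q(2n)$. In this subcase $T(2n-1,1)$ is a $q$-unit exactly when $q$ is not a Wieferich prime to base $2$.

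If $(q-1)\nmid 2n$, Kummer's congruence shows that $B_{2n}/(2n)\bmod q$ depends only on $2n\bmod (q-1)$. More generally, the whole array $T(2n-p,p)\bmod q$ depends only on $n$ modulo the period of $M$. This reduces both subcases to a bounded set of residue classes. In each class I would search for an index $p$, using $p=1$, $p=2$ (where $a_{n,2}=2a_{n,1}$) and then small $p$, whose anti-diagonal entry is nonzero.

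The remaining difficulty is the exceptional situation where $a_{n,1}\equiv0$, namely irregular pairs $(q,2n)$ and Wieferich-type primes. There I would try to exploit the near-top coefficient $a_{n,q-1}$ via the mod-$q$ identity $\tan^{q}x\equiv\tan(x^q)$-type Frobenius relations in $\mathbb Z_{(q)}[[x]]$. These express $T(N,q-1)\bmod q$ through a derivative of $\tan^q$, and so give a second, independent test that cannot vanish simultaneously with the first.
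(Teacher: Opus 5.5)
\textbf{There is a genuine gap.} Your preliminary reductions are correct: the case $q>n$ via $a_{n,n}=(n!)^2$, the restriction to $p\le q-1$, and the autonomous mod-$q$ evolution $v_{m+1}=Mv_m$. (Invertibility of $M$ actually comes from its zero diagonal and even size $q-1$, which give $\det M=\pm\prod_{k\ \mathrm{odd}}k(k+1)$, but you never need it.) The problem is that the step you call the main obstacle is the entire content of the proposition, and nothing in the proposal closes it.

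Testing $p=1$ cannot succeed on its own, and your list of exceptions is incomplete. Apart from irregular pairs and Wieferich primes, $q\mid 2^{2n}-1$ whenever $\operatorname{ord}_q 2\mid 2n$. For instance, $17\mid\tau_n=T(2n-1,1)$ for every $n\equiv4\pmod 8$, because $\operatorname{ord}_{17}2=8$ divides $2n$ while $16\nmid 2n$; yet $17$ is regular and not Wieferich. Reducing $n$ modulo the period of $M$ leaves finitely many classes for each fixed $q$, but there are infinitely many $q$, so ``searching'' the classes is not an argument. The Frobenius suggestion is likewise only an assertion. Since $\tan x\notin\mathbb Z_{(q)}[[x]]$, the congruence is not meaningful there. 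The claim that a second test ``cannot vanish simultaneously with the first'' is exactly what would need proof.

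\textbf{The missing idea} is to use a whole block of coefficients at once and convert it into a block of ordinary tangent numbers.
\begin{enumerate}
\item Take the odd indices $p=2r+1$, $0\le r\le h-1$, with $h=(q-1)/2$. Define $D_m$ by $\frac{d^m}{dx^m}\tan x=D_m(\tan x)$ and expand $t^{2r+1}=\sum_{j\le r}c_{r,j}D_{2j}(t)$.
\item This change of basis is triangular with diagonal entries $(2j)!$, where $2j\le q-3$. Hence every $c_{r,j}$ is $q$-integral. Evaluating at $t=i$, where $D_m(i)=0$ for $m\ge1$, gives $c_{r,0}=(-1)^r$, a unit.
\item It follows that $T(N,2r+1)=\sum_j c_{r,j}T(N+2j,1)$. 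So the hypotheses $q\mid T(2n-2r-1,2r+1)$ force, by induction on $r$, $q\mid\tau_m$ for all $h$ consecutive indices $n-h+1\le m\le n$.
\item In your matrix language: modulo $q$ and for $2j\le q-3$, the row $e_1^{\top}M^{2j}$ is the coefficient row of $D_{2j}$, and these rows span the odd coordinates triangularly.
\item Such a window always contains some $m$ with $2m\equiv2\pmod{q-1}$. There Kummer's congruence gives $B_{2m}/(2m)\equiv B_2/2=1/12$, hence $\tau_m\equiv(-1)^{m-1}\not\equiv0\pmod q$ when $q>3$.
\item For $q=3$, one checks directly that $\tau_m\equiv(-1)^{m-1}\pmod 3$.
\end{enumerate}
The point is that you get to choose the favourable residue class $2m\equiv2$, so irregular primes, Wieferich primes, and the order of $2$ modulo $q$ never enter.
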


\begin{proof}
Suppose that an odd prime $q$ divides every $a_{n,p}$. By
\cref{prop:PQ-edge-coefficients}, \(a_{n,n}=(n!)^2\), and hence
$q\leq n$. Put \(h:=(q-1)/2\). For $0\leq r\leq h-1$, choose $p=2r+1$.  Then $p<q\leq n$, so $p!$ is
invertible modulo $q$.  Therefore
\begin{equation}
T(2n-2r-1,2r+1)\equiv0\pmod q
\qquad(0\leq r\leq h-1).
\label{eq:Pn-higher-tangent-congruences}
\end{equation}

Let $D_m(t)$ be the tangent derivative polynomials defined by
\(d^m(\tan x)/dx^m=D_m(\tan x)\). They satisfy
\(D_0(t)=t\) and \(D_{m+1}(t)=(1+t^2)D_m'(t)\). Consequently, $D_{2j}(t)$ is an odd polynomial of degree $2j+1$ with
leading coefficient $(2j)!$.  Thus, for every $r\geq0$, there are unique
rational numbers $c_{r,j}$ such that
\begin{equation}
t^{2r+1}=\sum_{j=0}^{r}c_{r,j}D_{2j}(t).
\label{eq:odd-monomial-tangent-derivative-basis}
\end{equation}
Moreover, $D_m(t)$ is divisible by $1+t^2$ for every $m\geq1$.  Evaluating
\eqref{eq:odd-monomial-tangent-derivative-basis} at $t=i$ therefore gives
\(i^{2r+1}=c_{r,0}i\), and hence
\begin{equation}
c_{r,0}=(-1)^r.
\label{eq:tangent-basis-cr0}
\end{equation}

Fix $0\leq r\leq h-1$. Since $2r\leq q-3$, the triangular
change-of-basis matrix between
\(t,t^3,\ldots,t^{2r+1}\) and
\(D_0(t),D_2(t),\ldots,D_{2r}(t)\) has diagonal entries
\(0!,2!,\ldots,(2r)!\), all of which are units in
\(\mathbb Z_{(q)}:=\{a/b\in\mathbb Q:q\nmid b\}\). Hence
\begin{equation}
c_{r,j}\in\mathbb Z_{(q)}
\qquad(0\leq j\leq r),
\label{eq:tangent-basis-q-integrality}
\end{equation}
while \eqref{eq:tangent-basis-cr0} shows that $c_{r,0}$ is a unit in
$\mathbb Z_{(q)}$.

Substituting $t=\tan x$ in
\eqref{eq:odd-monomial-tangent-derivative-basis} yields
\[
\tan^{2r+1}x
=
\sum_{j=0}^{r}c_{r,j}\frac{d^{2j}}{dx^{2j}}\tan x.
\]
Comparison of exponential Taylor coefficients gives
\begin{equation}
T(M,2r+1)
=
\sum_{j=0}^{r}c_{r,j}T(M+2j,1)
\qquad(M\geq0).
\label{eq:higher-tangent-to-ordinary-tangent}
\end{equation}
Write \(\tau_m:=T(2m-1,1)\) for $m\geq1$. Taking $M=2n-2r-1$ in
\eqref{eq:higher-tangent-to-ordinary-tangent} and using
\eqref{eq:Pn-higher-tangent-congruences}, we obtain
\begin{equation}
0\equiv
\sum_{j=0}^{r}c_{r,j}\tau_{n-r+j}
\pmod q
\qquad(0\leq r\leq h-1).
\label{eq:Pn-consecutive-tangent-induction}
\end{equation}
For $r=0$ this gives $\tau_n\equiv0\pmod q$.  Since $c_{r,0}=(-1)^r$,
\eqref{eq:Pn-consecutive-tangent-induction} then yields inductively
\begin{equation}
\tau_{n-h+1}\equiv
\tau_{n-h+2}\equiv\cdots\equiv
\tau_n\equiv0\pmod q.
\label{eq:Pn-block-tangent-divisibility}
\end{equation}

Assume first that $q>3$. Among the $h$ consecutive integers
$n-h+1,n-h+2,\ldots,n$ there is an integer $m$ satisfying
\(m\equiv1\pmod h\). Since $2h=q-1$, we have
\(2m\equiv2\not\equiv0\pmod{q-1}\). Kummer's congruence for Bernoulli numbers therefore gives
\[
\frac{B_{2m}}{2m}
\equiv
\frac{B_2}{2}
=
\frac1{12}
\pmod q;
\]
see \cite[pp.~239--241]{IrelandRosen1990}.  Using the classical
tangent--Bernoulli formula,
\[
\tau_m
=
(-1)^{m-1}
2^{2m}(2^{2m}-1)\frac{B_{2m}}{2m},
\]
and Fermat's theorem, we obtain
\[
\tau_m
\equiv
(-1)^{m-1}\,4(4-1)\frac1{12}
=
(-1)^{m-1}
\not\equiv0\pmod q.
\]
This contradicts \eqref{eq:Pn-block-tangent-divisibility}.

It remains to exclude $q=3$. From \((\tan x)'=1+\tan^2x\) one obtains, for $m\geq1$,
\begin{equation}
\tau_{m+1}
=
\sum_{j=1}^{m}
\binom{2m}{2j-1}\tau_j\tau_{m+1-j}.
\label{eq:tangent-number-quadratic-recurrence}
\end{equation}
We claim that \(\tau_m\equiv(-1)^{m-1}\pmod3\) for $m\geq1$.
This is clear for $m=1$.  If it holds up to $m$, then
\eqref{eq:tangent-number-quadratic-recurrence} gives
\[
\tau_{m+1}
\equiv
(-1)^{m-1}
\sum_{j=1}^{m}\binom{2m}{2j-1}
=
(-1)^{m-1}2^{2m-1}
\equiv
(-1)^m
\pmod3.
\]
Thus no $\tau_m$ is divisible by $3$, contradicting
\eqref{eq:Pn-block-tangent-divisibility} also in this case.  Hence no odd
prime can divide all coefficients of $P_n$.
\end{proof}

\begin{theorem}[Exact content of $P_n$]
\label[theorem]{thm:Pn-content}
With $s_2$ and $L_n$ as in
\cref{thm:Pn-exact-2-adic-content}, one has, for every $n\geq1$,
\begin{equation}
\boxed{
\operatorname{cont}(P_n)
=
\gcd_{1\leq p\leq n}
\bigl\{p!\,T(2n-p,p)\bigr\}
=
2^{\,2n-s_2(n)-L_n}
=
2^{\,2n-s_2(n)-\lfloor\log_2 n\rfloor-1}.
}
\label{eq:Pn-content}
\end{equation}
Here $\operatorname{cont}(P_n)$ denotes the positive greatest common divisor
of the coefficients of $P_n$.
\end{theorem}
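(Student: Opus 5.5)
The plan is to read the content off prime by prime. Write $a_{n,p}:=p!\,T(2n-p,p)$ for $1\leq p\leq n$, so that $P_n(y)=\sum_{p=1}^n a_{n,p}y^{p-1}$. These coefficients are positive integers: $T(m,k)$ counts suitable permutations (equivalently, it is a nonnegative integer by the recurrence \eqref{eq:unified-coefficient-recurrence} with $T(0,0)=1$), and the edge values in \cref{prop:PQ-edge-coefficients} confirm this. Hence $\operatorname{cont}(P_n)=\gcd_{1\leq p\leq n}\{a_{n,p}\}$ by definition, which is the first equality. For the second equality I will determine, for each prime $\ell$, the exponent
\[
\min_{1\leq p\leq n}\nu_\ell(a_{n,p}),
\]
which is exactly the exponent of $\ell$ in the gcd.

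For $\ell$ odd, \cref{prop:Pn-no-odd-common-prime} states that no odd prime divides all the $a_{n,p}$. So the minimum is $0$ for every odd prime, and the gcd is a pure power of $2$.

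For $\ell=2$, \cref{thm:Pn-exact-2-adic-content} gives the exponent directly:
\[
\min_{1\leq p\leq n}\nu_2(a_{n,p})=2n-s_2(n)-L_n .
\]
Combining the two prime cases yields $\operatorname{cont}(P_n)=2^{\,2n-s_2(n)-L_n}$. Substituting $L_n=\lfloor\log_2 n\rfloor+1$ then gives the final form in \eqref{eq:Pn-content}.

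There is essentially no obstacle here, since both substantive inputs are already proved. The only points needing a word are the integrality of the $T(m,k)$, which follows from the recurrence, and the trivial case $n=1$. In that case $P_1=1$, and the formula gives $2^{2-1-1}=1$, as required.
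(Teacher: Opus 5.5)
Your proposal is correct and follows the paper's own proof. You combine the exact $2$-adic minimum from \cref{thm:Pn-exact-2-adic-content} with the absence of an odd common prime divisor from \cref{prop:Pn-no-odd-common-prime}, so the gcd is a power of $2$ with exponent $2n-s_2(n)-L_n$; your extra remarks on the integrality and positivity of the coefficients and on the case $n=1$ are harmless additions.
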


\begin{proof}
Let \(d_n:=\gcd_{1\leq p\leq n}\{p!\,T(2n-p,p)\}\). By
\cref{thm:Pn-exact-2-adic-content},
\(\nu_2(d_n)=2n-s_2(n)-L_n\). By \cref{prop:Pn-no-odd-common-prime}, no odd prime divides $d_n$.
Therefore $d_n$ is a power of two, and its exact exponent is the one given
above.  This proves \eqref{eq:Pn-content}.
\end{proof}

\subsection{Primitivity of $Q_n$}

The arithmetic behavior of the secant companion is markedly different from
that of $P_n$: no nontrivial integer divides all coefficients of $Q_n$.

\begin{theorem}[Primitivity of $Q_n$]
\label[theorem]{thm:Qn-primitive}
For every $n\geq1$,
\begin{equation}
\boxed{
\operatorname{cont}(Q_n)
=
\gcd_{1\leq p\leq n}
\bigl\{p!\,S(2n-p-1,p-1)\bigr\}
=
1,
}
\label{eq:Qn-primitive}
\end{equation}
where $\operatorname{cont}(Q_n)$ denotes the positive greatest common
divisor of the coefficients of $Q_n$.
\end{theorem}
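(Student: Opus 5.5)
The plan is to mirror the proof of \cref{prop:Pn-no-odd-common-prime}, with Euler numbers in place of tangent numbers. Write $b_{n,p}:=p!\,S(2n-p-1,p-1)$ for the coefficients of $Q_n$. I will show that no prime $q$ divides every $b_{n,p}$. The case $n=1$ is trivial, so assume $n\geq2$.

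\emph{Step 1: the prime $2$ and the size of $q$.} By \cref{prop:PQ-edge-coefficients}, $Q_n(0)=(-1)^{n-1}E_{2n-2}$. The Euler numbers $E_{2m}$ are all odd, which follows from $\sec x\,\cos x=1$ by reduction mod $2$ or by the standard recurrence. Hence $2\nmid\operatorname{cont}(Q_n)$. The leading coefficient is $n!(n-1)!$, so any prime $q$ dividing all $b_{n,p}$ is odd and satisfies $q\leq n$. Put $h=(q-1)/2$ and take $p=2r+1<q$ for $0\leq r\leq h-1$. Since $p!$ is a $q$-adic unit, this gives
\[
S(2n-2r-2,2r)\equiv0\pmod q\qquad(0\leq r\leq h-1).
\]

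\emph{Step 2: change of basis.} Define secant derivative polynomials by $\frac{d^m}{dx^m}\sec x=\sec x\,R_m(\tan x)$. They satisfy $R_0=1$ and $R_{m+1}=tR_m+(1+t^2)R_m'$, so $R_m$ has degree $m$, parity $m$, and leading coefficient $m!$. Hence
\[
t^{2r}=\sum_{j=0}^{r}c_{r,j}R_{2j}(t),
\]
with a triangular change of basis whose diagonal entries $(2j)!$, $2j<q$, are $q$-units. Therefore $c_{r,j}\in\mathbb Z_{(q)}$. Comparing exponential Taylor coefficients gives
\[
S(M,2r)=\sum_{j}c_{r,j}S(M+2j,0).
\]
With $M=2n-2r-2$ and $\varepsilon_m:=S(2m,0)=\pm E_{2m}$, the congruences of Step 1 become
\[
\sum_{j=0}^{r}c_{r,j}\,\varepsilon_{n-1-r+j}\equiv0\pmod q .
\]
If $c_{r,0}$ is a $q$-unit, induction on $r$ yields $\varepsilon_{n-h}\equiv\cdots\equiv\varepsilon_{n-1}\equiv0\pmod q$.

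\emph{Step 3 (main obstacle): computing $c_{r,0}$.} In the tangent case the paper evaluated at $t=i$. That does not work here, since $R_m(i)\neq0$ (for instance $R_1=t$). Instead I will pass to the hyperbolic version of the identity, $\operatorname{sech}x\,\tanh^{2r}x=\sum_j c'_{r,j}\frac{d^{2j}}{dx^{2j}}\operatorname{sech}x$. Here $c'_{r,j}=\pm c_{r,j}$, by the substitution $x\mapsto ix$, which changes the signs uniformly in $j$. Then I integrate over $\mathbb R$. Every derivative term with $j\geq1$ integrates to $0$ because $\operatorname{sech}$ and its derivatives decay. This gives
\[
c_{r,0}=\pm\frac1\pi\int_{-\infty}^{\infty}\operatorname{sech}x\,\tanh^{2r}x\,dx=\pm\frac{(2r-1)!!}{(2r)!!}.
\]
The integral follows from the substitution $u=\tanh x$, which reduces it to the beta integral $\int_{-1}^1u^{2r}(1-u^2)^{-1/2}\,du$. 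Since $2r<q$, this value is a $q$-unit.

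\emph{Step 4: contradiction.} Among the $h$ consecutive integers $n-h,\dots,n-1$ there is an $m$ with $h\mid m$, that is, $(q-1)\mid 2m$. The Kummer-type congruence for Euler numbers, $E_{k+(q-1)}\equiv E_k\pmod q$ for even $k$ and odd $q$, then gives $E_{2m}\equiv E_0=1\pmod q$. This contradicts Step 3, and it also covers $q=3$, where $h=1$. If one prefers not to cite the Euler congruence, it can be derived from $E_{2m}=\pm2^{2m+1}(2m)!\,\beta(2m+1)/\pi^{2m+1}$ via Euler polynomials, or checked through $E_{2m}\equiv$ the values of an Euler polynomial at $1/2$. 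Hence $\operatorname{cont}(Q_n)=1$.
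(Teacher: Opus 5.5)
\textbf{Gap in Step 4.} Steps 1--3 are sound and follow the paper closely. Your Step 3 reaches $c_{r,0}=(-1)^r\binom{2r}{r}4^{-r}$ by integrating the hyperbolic identity over $\mathbb R$. The paper instead applies a linear functional to $\sec z/(1-t^2\tan^2 z)$ and obtains $\cos z=(1+\tan^2z)^{-1/2}$. Both routes are valid. (The substitution $x\mapsto ix$ actually gives $c'_{r,j}=(-1)^{r+j}c_{r,j}$, not a sign independent of $j$, but you only use $j=0$.)

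The problem is Step 4. The congruence $E_{k+(q-1)}\equiv E_k\pmod q$ holds for even $k\geq 2$ but fails at $k=0$; the paper's citation explicitly requires both indices $\geq1$. The reason is that $E_k=2L(-k,\chi_{-4})$, and the Kummer congruence concerns $(1-\chi_{-4}(q)q^{k})L(-k,\chi_{-4})$. At $k=0$ this Euler factor equals $1-\chi_{-4}(q)\in\{0,2\}$, not $1$. Consequently, if $(q-1)\mid 2m$ with $m\geq1$, then $E_{2m}\equiv 1-(-1)^{(q-1)/2}\pmod q$, never $E_0=1$. For example, $E_2=-1\not\equiv 1 \pmod 3$.

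For $q\equiv1\pmod 4$ the situation is fatal, because then $q\mid E_{2m}$: $5\mid E_4=5$, $5\mid E_8=1385$, and $13\mid E_{12}=2702765$. So for $q=5$, the index $m$ you select (the even member of $\{n-2,n-1\}$) satisfies $\varepsilon_m\equiv0\pmod 5$. That is perfectly consistent with the congruences from Step 2, and no contradiction arises. For $q\equiv 3\pmod 4$ you get $E_{2m}\equiv 2\not\equiv0$, so your conclusion survives there only by accident, and even then the value is not the $1$ you state.

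\textbf{The fix.} Choose a different element of the block. Since $n-h,\dots,n-1$ are $h$ consecutive integers, one of them satisfies $m\equiv1\pmod h$, i.e.\ $2m\equiv2\pmod{q-1}$. Moreover $m\geq n-h\geq1$ because $q\leq n$. Both indices $2m$ and $2$ are then positive, so the Euler-number congruence legitimately gives $E_{2m}\equiv E_2=-1\not\equiv0\pmod q$. This contradicts $\varepsilon_m=(-1)^mE_{2m}\equiv0$. This is exactly how the paper concludes, and it handles $q=3$ uniformly, since there $E_{2m}\equiv-1$ for every $m\geq1$.
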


\begin{proof}
Put \(b_{n,p}:=p!\,S(2n-p-1,p-1)\) for $1\leq p\leq n$.
We first exclude the prime $2$. Let \(\mathcal E_m:=S(2m,0)\) for $m\geq0$.
The coefficient recurrence
\(S(m+1,k)=kS(m,k-1)+(k+1)S(m,k+1)\) gives, for $m\geq1$,
\(\mathcal E_m=S(2m-1,1)=S(2m-2,0)+2S(2m-2,2)
\equiv\mathcal E_{m-1}\pmod2\). Since $\mathcal E_0=S(0,0)=1$,
all $\mathcal E_m$ are odd. In particular,
\(b_{n,1}=S(2n-2,0)=\mathcal E_{n-1}\) is odd, so $2$ cannot divide all coefficients of $Q_n$.

Suppose now that an odd prime $q$ divides every $b_{n,p}$. By
\cref{prop:PQ-edge-coefficients}, \(b_{n,n}=n!(n-1)!\), hence $q\leq n$.
Set \(h:=(q-1)/2\). For $0\leq r\leq h-1$, choose $p=2r+1$.  Then
$p<q\leq n$, so $p!$ is invertible modulo $q$, and therefore
\begin{equation}
S(2n-2r-2,2r)\equiv0\pmod q
\qquad (0\leq r\leq h-1).
\label{eq:Qn-S-congruences}
\end{equation}

We next convert these congruences into congruences for consecutive secant
numbers. Define the secant derivative polynomials $R_m(t)$ by
\(d^m(\sec x)/dx^m=\sec x\,R_m(\tan x)\). The addition formula for the
secant gives
\[
\sum_{m\geq0}R_m(t)\frac{z^m}{m!}
=\frac{\sec z}{1-t\tan z}.
\]
Comparison with the defining generating function of $S(m,k)$ yields
\begin{equation}
R_m(t)=\sum_{k=0}^{m}S(m,k)t^k.
\label{eq:secant-derivative-polynomial-S}
\end{equation}
Taking the even part in $z$ gives
\begin{equation}
\sum_{j\geq0}R_{2j}(t)\frac{z^{2j}}{(2j)!}
=
\frac{\sec z}{1-t^2\tan^2z}.
\label{eq:even-secant-derivative-generating-function}
\end{equation}
In particular, $R_{2j}$ is an even polynomial of degree $2j$ with leading
coefficient \(S(2j,2j)=(2j)!\). Hence, for every $r\geq0$, there are unique rational numbers $c_{r,j}$ such
that
\begin{equation}
t^{2r}=\sum_{j=0}^{r}c_{r,j}R_{2j}(t).
\label{eq:monomial-in-secant-derivative-basis}
\end{equation}

We shall need the coefficient $c_{r,0}$. Let $\Lambda$ be the linear
functional on the even polynomial ring determined by
\(\Lambda(R_0)=1\) and \(\Lambda(R_{2j})=0\) for $j\geq1$. Then $\Lambda(t^{2r})=c_{r,0}$.  Applying $\Lambda$ coefficientwise to
\eqref{eq:even-secant-derivative-generating-function} gives
\[
1
=
\sec z\sum_{r\geq0}c_{r,0}\tan^{2r}z.
\]
Consequently,
\[
\sum_{r\geq0}c_{r,0}\tan^{2r}z
=
\cos z
=
(1+\tan^2z)^{-1/2}.
\]
Since $\tan^2z=z^2+O(z^4)$, composition with $\tan^2z$ is injective on
formal power series, and hence
\begin{equation}
c_{r,0}
=
(-1)^r\frac{1}{4^r}\binom{2r}{r}.
\label{eq:cr0-central-binomial}
\end{equation}

Fix now $0\leq r\leq h-1$. Since $2r\leq q-3$, the triangular
change-of-basis matrix between \(1,t^2,\ldots,t^{2r}\) and
\(R_0(t),R_2(t),\ldots,R_{2r}(t)\) has diagonal entries
\(0!,2!,\ldots,(2r)!\), all of which are units in the localization
\(\mathbb Z_{(q)}:=\{a/b\in\mathbb Q:q\nmid b\}\). Thus every $c_{r,j}$ belongs to $\mathbb Z_{(q)}$.  Moreover,
\eqref{eq:cr0-central-binomial} and $2r<q$ show that
\begin{equation}
c_{r,0}\in\mathbb Z_{(q)}^\times.
\label{eq:cr0-q-adic-unit}
\end{equation}

Multiplying \eqref{eq:monomial-in-secant-derivative-basis} by $\sec x$,
putting $t=\tan x$, and using the definition of $R_{2j}$ gives
\[
\sec x\,\tan^{2r}x
=
\sum_{j=0}^{r}c_{r,j}\frac{d^{2j}}{dx^{2j}}\sec x.
\]
Comparison of the coefficients of $x^M/M!$ yields
\begin{equation}
S(M,2r)
=
\sum_{j=0}^{r}c_{r,j}S(M+2j,0)
\qquad(M\geq0).
\label{eq:S-to-secant-triangular-relation}
\end{equation}
Taking $M=2n-2r-2$ and using
\eqref{eq:Qn-S-congruences}, we obtain
\begin{equation}
0
\equiv
\sum_{j=0}^{r}
c_{r,j}\,
\mathcal E_{\,n-r-1+j}
\pmod q.
\label{eq:Qn-consecutive-secant-induction}
\end{equation}
For $r=0$ this gives $\mathcal E_{n-1}\equiv0\pmod q$.
Using \eqref{eq:cr0-q-adic-unit}, equation
\eqref{eq:Qn-consecutive-secant-induction} then yields inductively
\begin{equation}
\mathcal E_{n-h}\equiv
\mathcal E_{n-h+1}\equiv\cdots\equiv
\mathcal E_{n-1}\equiv0\pmod q.
\label{eq:Qn-block-secant-divisibility}
\end{equation}

We now use the classical Kummer-type congruence for Euler numbers
\[
E_a\equiv E_b\pmod q
\qquad
\text{if }a\equiv b\pmod{q-1}
\quad\text{and}\quad a,b\geq1;
\]
see, for example, \cite[Corollary~3.9]{Chen2004Euler}.
Among the $h$ consecutive integers $n-h,n-h+1,\ldots,n-1$ there is
an integer $m$ satisfying \(m\equiv1\pmod h\). Hence
\(2m\equiv2\pmod{q-1}\), and therefore
\(E_{2m}\equiv E_2=-1\pmod q\). On the other hand,
\(\mathcal E_m=S(2m,0)=(-1)^mE_{2m}\), so $\mathcal E_m\not\equiv0\pmod q$, contradicting
\eqref{eq:Qn-block-secant-divisibility}.  Thus no odd prime divides all
coefficients of $Q_n$.  Since the prime $2$ was already excluded, the
content of $Q_n$ is $1$.
\end{proof}

\section{Structural properties of $P_n$ and $Q_n$}

\subsection{Real zeros}

\begin{proposition}[The real zeros of $P_{2n}$ and $Q_{2n}$]
\label[proposition]{prop:PQ-even-real-zero}
For every $n\geq1$, the polynomials $P_{2n}$ and $Q_{2n}$ each have
exactly one real zero, and this zero is negative. Equivalently, there exist unique numbers $\rho_n,\sigma_n>0$ such that
\(P_{2n}(-\rho_n)=0\) and \(Q_{2n}(-\sigma_n)=0\).
\end{proposition}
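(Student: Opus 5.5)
Since $P_{2n}$ and $Q_{2n}$ have degree $2n-1$ (leading coefficients $((2n)!)^2$ and $(2n)!(2n-1)!$ by \cref{prop:PQ-edge-coefficients}), they have at least one real zero. Every coefficient $p!\,T(2n-p,p)$ and $p!\,S(2n-p-1,p-1)$ is nonnegative, and the constant terms are positive by \cref{prop:PQ-edge-coefficients}. So both polynomials are positive on $[0,\infty)$, and every real zero is negative. The whole difficulty is therefore \emph{uniqueness}. I plan to prove it with a sign condition at zeros: if $y_0<0$ and $P_{2n}(y_0)=0$, then $P_{2n}'(y_0)>0$. A polynomial that crosses zero only upward has at most one real zero. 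The same argument should apply to $Q_{2n}$.

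To obtain such a sign condition I will derive a first-order differential recurrence in $y$ for $P_n$ (the paper announces one). The starting point is the coefficient recurrence \eqref{eq:unified-coefficient-recurrence} with $m=2n-p-1$, $k=p$. It expresses $T(2n-p,p)$ through $T(2n-p-1,p\mp1)$, which are the coefficients of the auxiliary odd-order sums $\sum_p p!\,T(2n-1-p,p\pm1)y^{p-1}$. Iterating once more, I expect a relation of the shape
\[
P_{n+1}(y)=A(y)\,P_n(y)+B(y)\,P_n'(y)+C(y)\,P_n''(y)
\]
with explicit low-degree polynomial coefficients. Alternatively, the same relation can be obtained from the generating function of \cref{prop:PQ-generating-functions} by differentiating in $x$ and in $y$. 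I would then prove the statement together with a companion claim by induction on $n$. The companion claim is that $P_{2n-1}>0$ on $\mathbb R$, which is the paper's assertion that odd-index polynomials have no real zeros. With it, one evaluates the recurrence at a zero $y_0$ of $P_{2n}$ and reads off the sign of $P_{2n}'(y_0)$ from the positivity of the odd neighbour and the signs of $A,B,C$ on $(-\infty,0)$.

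A second, more analytic route is available if the recurrence turns out badly signed. One rewrites $P_n(-y)$ as an integral: the weights $p!\,y^{p}$ come from $\int_0^\infty e^{-t}(ty)^p\,dt$, and $T(m,p)/m!=[x^m]\tan^p x$ is a Cauchy integral. One would then show that $y\mapsto P_{2n}(-y)$ changes sign exactly once, for instance by a variation-diminishing (total positivity) property of the kernel.

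I expect the main obstacle to be the sign analysis at a zero. The recurrence will involve $P_n''$, or the other parity class, and its coefficients need not have a fixed sign on the whole negative axis. So I would first check small cases ($P_2=2+4y$, $P_4$, $Q_4$) to find the correct form. If needed, I would strengthen the induction hypothesis to an interlacing-type statement, namely that $P_{2n}'$ and $P_{2n-1}$ are positive at the zero of $P_{2n}$. The odd-index positivity would be propagated simultaneously.
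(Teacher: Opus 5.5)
Your reduction is correct. Positivity of the coefficients rules out zeros on $[0,\infty)$, and odd degree with positive leading coefficient gives a negative zero, so everything rests on uniqueness. However, the proposal does not yet contain an argument for uniqueness: both routes stop exactly where the work begins.

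The relation that the coefficient recurrence actually yields (\cref{prop:PQ-polynomial-recurrences}) is not of the local form $P_{n+1}=AP_n+BP_n'+CP_n''$ you anticipate. It is a divided difference at $y=1$:
\[
\int_0^yP_n(t)\,dt=\frac{y\bigl(h(y)-h(1)\bigr)}{y-1},
\qquad
h(y):=y^2\bigl(yP_{n-1}'(y)+2P_{n-1}(y)\bigr),
\qquad
h(1)=P_n(0).
\]
At a zero $y_0<0$ of $P_{2n}$, computing $P_{2n}'(y_0)$ from this needs $P_{2n-1}$ and its first three derivatives at $y_0$, together with the tangent number $h(1)$. The hypothesis $P_{2n-1}>0$ on $\mathbb R$ gives no sign information on those derivatives along the negative axis. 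You also do not indicate how the companion claim $P_{2n-1}>0$ would be propagated from $P_{2n-2}$, and the ``interlacing'' strengthening is named but never formulated.

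The variation-diminishing route is equally unsupported. No totally positive kernel is exhibited, and the weight $\int_0^\infty e^{-t}(ty)^p\,dt$ does not absorb the $p$-dependent factor $(2n-p)!$. Any such argument would also have to distinguish even from odd index, since $P_{2n-1}$ has no real zero at all.

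The missing idea is a representation in which the parity of the index turns into positivity. The paper uses $p!\,(2N-p)!/(2N+1)!=\int_0^1t^p(1-t)^{2N-p}\,dt$ to write, for $x>0$ and $a=xt/(1-t)$,
\[
\frac{P_N'(-x)}{(2N+1)!}=\int_0^1t^2(1-t)^{2N-2}\,[u^{N-2}]\,G_a(u)^2\,dt,
\qquad
G_a(u)=\frac{\sin\sqrt u}{\sqrt u\,\bigl(\cos\sqrt u+a\sqrt u\,\sin\sqrt u\bigr)}.
\]
It then identifies $G_a(u)=\sum_jw_j/(1-u/\lambda_j)$, with $w_j>0$ and $\sum_jw_j=1$, through the Sturm--Liouville problem $-f''=\lambda f$, $f'(0)=0$, $f(1)=af'(1)$, which has one negative eigenvalue.

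So $G_a$ is the moment generating function of a real random variable. The even Taylor coefficients of $G_a^2$ are even moments, hence positive, and for $N=2n$ the index $N-2$ is even. This gives $P_{2n}'>0$ on all of $\mathbb R$ directly, with no induction. $Q_{2n}$ is treated the same way with $G_aK_a$, where $K_a=1/D_a=\prod_j(1-u/\lambda_j)^{-1}$ is again a moment generating function. The same device with $[u^{N-1}]G_a$ and $N=2n-1$ proves your companion claim $P_{2n-1}>0$ independently, rather than by induction.
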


The sign of the real zero is immediate from the positivity of the
coefficients: neither polynomial can vanish on $[0,\infty)$. Moreover,
$P_{2n}$ and $Q_{2n}$ both have degree $2n-1$ with positive leading
coefficient, so each has at least one negative real zero. Thus the
substantive assertion in \cref{prop:PQ-even-real-zero} is uniqueness.

\begin{proof}
We prove the stronger assertion
\begin{equation}
\label{eq:PQ-even-monotonicity}
P_{2n}'(y)>0,
\qquad
Q_{2n}'(y)>0
\qquad (y\in\mathbb R).
\end{equation}
The uniqueness of the real zero will then follow immediately.

Put \(\mathcal A(u):=\tan\sqrt u/\sqrt u\) and
\(\mathcal B(u):=\sec\sqrt u\). By the defining exponential generating functions of the higher-order
tangent and secant numbers,
\begin{equation}
\label{eq:PQ-coefficient-extraction}
\frac{T(2N-p,p)}{(2N-p)!}
=[u^{N-p}]\mathcal A(u)^p,
\qquad
\frac{S(2N-p-1,p-1)}{(2N-p-1)!}
=[u^{N-p}]\mathcal B(u)\mathcal A(u)^{p-1}.
\end{equation}

We first derive coefficient representations for the derivatives. For
$N\geq2$, the beta integral gives
\(p!(2N-p)!/(2N+1)!=\int_0^1 t^p(1-t)^{2N-p}\,dt\). Using \eqref{eq:PQ-coefficient-extraction}, we obtain
\begin{align*}
\frac{P_N'(y)}{(2N+1)!}
&=
\sum_{p=2}^{N}(p-1)y^{p-2}
\int_0^1
t^p(1-t)^{2N-p}
[u^{N-p}]\mathcal A(u)^p\,dt
\\
&=
\int_0^1
t^2(1-t)^{2N-2}
[u^{N-2}]
\mathcal A(u)^2
\sum_{p\geq2}(p-1)
\left(
y\frac{t}{1-t}u\mathcal A(u)
\right)^{p-2}
dt.
\end{align*}
Since \(\sum_{p\geq2}(p-1)z^{p-2}=(1-z)^{-2}\), it follows that
\begin{equation}
\label{eq:PN-derivative-coefficient}
\frac{P_N'(y)}{(2N+1)!}
=
\int_0^1
t^2(1-t)^{2N-2}
[u^{N-2}]
\frac{\mathcal A(u)^2}
{\left(
1-y\frac{t}{1-t}u\mathcal A(u)
\right)^2}
\,dt.
\end{equation}

Similarly,
\(p!(2N-p-1)!/(2N)!=\int_0^1 t^p(1-t)^{2N-p-1}\,dt\), and therefore
\begin{equation}
\label{eq:QN-derivative-coefficient}
\frac{Q_N'(y)}{(2N)!}
=
\int_0^1
t^2(1-t)^{2N-3}
[u^{N-2}]
\frac{\mathcal A(u)\mathcal B(u)}
{\left(
1-y\frac{t}{1-t}u\mathcal A(u)
\right)^2}
\,dt.
\end{equation}

For $y\geq0$, the inequalities in
\eqref{eq:PQ-even-monotonicity} are immediate from the positivity of
the coefficients. It therefore remains to consider $y<0$. Write $y=-x$ with $x>0$ and,
for $0<t<1$, put \(a:=xt/(1-t)>0\). Define
\begin{equation}
\label{eq:Da-definition}
D_a(u):=
\cos\sqrt u+a\sqrt u\,\sin\sqrt u
\end{equation}
and
\begin{equation}
\label{eq:Ga-Ka-definition}
G_a(u):=
\frac{\mathcal A(u)}
     {1+au\mathcal A(u)}
=
\frac{\sin\sqrt u}
     {\sqrt u\,D_a(u)},
\qquad
K_a(u):=
\frac{\mathcal B(u)}
     {1+au\mathcal A(u)}
=
\frac{1}{D_a(u)}.
\end{equation}
Thus the coefficients occurring in
\eqref{eq:PN-derivative-coefficient} and
\eqref{eq:QN-derivative-coefficient} reduce to
\begin{equation}
\label{eq:Ga-coefficients}
[u^{N-2}]G_a(u)^2,
\qquad
[u^{N-2}]G_a(u)K_a(u),
\end{equation}
respectively.

We shall show that all even Taylor coefficients in
\eqref{eq:Ga-coefficients} are strictly positive.

Consider on $L^2(0,1)$ the regular Sturm--Liouville operator
\(L_af=-f''\) with separated boundary conditions
\begin{equation}
\label{eq:Sturm-Liouville-boundary}
f'(0)=0,
\qquad
f(1)-af'(1)=0.
\end{equation}
This is a self-adjoint regular Sturm--Liouville problem. Hence its
spectrum consists of real, simple eigenvalues
$\{\lambda_j\}_{j\geq1}$ with no finite accumulation point, and its
normalized eigenfunctions form an orthonormal basis of $L^2(0,1)$;
see, for example, \cite{Zettl2005}.

The eigenvalue equation
\[
-f''=\lambda f
\]
together with $f'(0)=0$ gives
\[
f(s)=c\cos(\sqrt\lambda\,s),
\]
and the second boundary condition in
\eqref{eq:Sturm-Liouville-boundary} becomes
\[
\cos\sqrt\lambda
+
a\sqrt\lambda\,\sin\sqrt\lambda
=0.
\]
Consequently,
\begin{equation}
\label{eq:Da-eigenvalues}
D_a(\lambda_j)=0.
\end{equation}
Moreover, $D_a(0)=1$, so $0$ is not an eigenvalue.

Let $\phi_j$ be normalized real eigenfunctions and set
\begin{equation}
\label{eq:wj-definition}
w_j:=
\left|\langle 1,\phi_j\rangle\right|^2.
\end{equation}
Completeness gives
\begin{equation}
\label{eq:wj-sum}
\sum_{j\geq1}w_j
=
\|1\|_{L^2(0,1)}^2
=
1.
\end{equation}
Furthermore,
\begin{equation}
\label{eq:wj-positive}
w_j>0.
\end{equation}
Indeed, if $\lambda_j>0$, then
\(\langle1,\phi_j\rangle=c_j\sin\sqrt{\lambda_j}/\sqrt{\lambda_j}\).
This quantity cannot vanish, since
$\sin\sqrt{\lambda_j}=0$ together with
\eqref{eq:Da-eigenvalues} would imply
$\cos\sqrt{\lambda_j}=0$, which is impossible. The argument for a
negative eigenvalue is identical after replacing the trigonometric
functions by their hyperbolic counterparts.

We next identify $G_a$ spectrally. For $u$ outside the spectrum, let
\[
h=(L_a-u)^{-1}1.
\]
Then
\[
-h''-uh=1
\]
with the boundary conditions
\eqref{eq:Sturm-Liouville-boundary}. Direct solution gives
\[
h(s)
=
\frac{\cos(\sqrt u\,s)}
     {uD_a(u)}
-\frac1u.
\]
Hence
\begin{align*}
\left\langle
1,L_a(L_a-u)^{-1}1
\right\rangle
&=
1+u\int_0^1h(s)\,ds
\\
&=
\frac{\sin\sqrt u}
     {\sqrt u\,D_a(u)}
=
G_a(u).
\end{align*}
By the spectral theorem,
\begin{equation}
\label{eq:Ga-spectral-representation}
G_a(u)
=
\sum_{j\geq1}
\frac{w_j}{1-u/\lambda_j}.
\end{equation}

We also require a product representation for $K_a$. The function
$D_a$ is an entire function of $u$ of order $1/2$. Its zeros are
precisely the eigenvalues $\lambda_j$, and the standard eigenvalue
asymptotics for a regular Sturm--Liouville problem imply
\begin{equation}
\label{eq:eigenvalue-reciprocal-sum}
\sum_{j\geq1}\frac{1}{|\lambda_j|}<\infty.
\end{equation}
Therefore, by the Hadamard factorization theorem
\cite{Conway1995} and the normalization $D_a(0)=1$,
\begin{equation}
\label{eq:Da-Hadamard-product}
D_a(u)
=
\prod_{j\geq1}
\left(
1-\frac{u}{\lambda_j}
\right).
\end{equation}
Consequently,
\begin{equation}
\label{eq:Ka-product}
K_a(u)
=
\prod_{j\geq1}
\left(
1-\frac{u}{\lambda_j}
\right)^{-1}.
\end{equation}

Set \(\tau_j:=1/\lambda_j\). Let $E,E_1,E_2,\ldots$ be independent
standard exponentially distributed random variables with
\(\mathbb P(E\in ds)=e^{-s}\,ds\) for $s>0$. Let $J$, independent of $E$,
satisfy \(\mathbb P(J=j)=w_j\), which is possible by \eqref{eq:wj-sum}, and define
\begin{equation}
\label{eq:X-definition}
X:=\tau_JE.
\end{equation}
Then, for $u$ sufficiently close to $0$,
\begin{equation}
\label{eq:Ga-mgf}
\mathbb E(e^{uX})
=
\sum_{j\geq1}
w_j\frac{1}{1-\tau_ju}
=
G_a(u).
\end{equation}

By \eqref{eq:eigenvalue-reciprocal-sum},
\(\sum_{j\geq1}|\tau_j|<\infty\). Hence the random series
\begin{equation}
\label{eq:Y-definition}
Y:=
\sum_{j\geq1}\tau_jE_j
\end{equation}
converges absolutely almost surely and in $L^1$. Independence and
\eqref{eq:Ka-product} yield, again for $u$ sufficiently close to
$0$,
\begin{equation}
\label{eq:Ka-mgf}
\mathbb E(e^{uY})
=
\prod_{j\geq1}
\frac{1}{1-\tau_ju}
=
K_a(u).
\end{equation}

Let $X_1$ and $X_2$ be independent copies of $X$, and take all
random variables above to be mutually independent. From \eqref{eq:Ga-mgf} and \eqref{eq:Ka-mgf},
\[
G_a(u)^2=\mathbb E\!\left(e^{u(X_1+X_2)}\right),
\qquad
G_a(u)K_a(u)=\mathbb E\!\left(e^{u(X+Y)}\right).
\]
Therefore, for every $m\geq0$,
\begin{equation}
\label{eq:Ga-square-even-positive}
[u^{2m}]G_a(u)^2
=
\frac{
\mathbb E\!\left((X_1+X_2)^{2m}\right)
}{(2m)!}
>0,
\end{equation}
and
\begin{equation}
\label{eq:GaKa-even-positive}
[u^{2m}]G_a(u)K_a(u)
=
\frac{
\mathbb E\!\left((X+Y)^{2m}\right)
}{(2m)!}
>0.
\end{equation}
For $m=0$, both coefficients equal $1$. For $m\geq1$, the
inequalities are strict because the corresponding real random
variables are not almost surely zero.

We now return to
\eqref{eq:PN-derivative-coefficient} and
\eqref{eq:QN-derivative-coefficient}. Taking $N=2n$, the coefficient
index is $N-2=2n-2$, which is even. Hence
\eqref{eq:Ga-square-even-positive} and
\eqref{eq:GaKa-even-positive} imply, for every $x>0$,
\[
P_{2n}'(-x)>0,
\qquad
Q_{2n}'(-x)>0.
\]
Together with the already observed positivity for $y\geq0$, this
proves \eqref{eq:PQ-even-monotonicity}.

Thus $P_{2n}$ and $Q_{2n}$ are strictly increasing on $\mathbb R$.
Since $P_{2n}(0)>0$ and $Q_{2n}(0)>0$, whereas both polynomials have odd degree $2n-1$ and positive leading
coefficient,
\[
\lim_{y\to-\infty}P_{2n}(y)
=
\lim_{y\to-\infty}Q_{2n}(y)
=
-\infty.
\]
The intermediate value theorem therefore gives a negative real zero
for each polynomial, and strict monotonicity shows that this zero is
unique.
\end{proof}

\begin{proposition}[Absence of real zeros of $P_{2n-1}$ and $Q_{2n-1}$]
\label[proposition]{prop:PQ-odd-no-real-zero}
For every $n\geq1$, the polynomials $P_{2n-1}$ and $Q_{2n-1}$
have no real zeros. More precisely,
\[
P_{2n-1}(y)>0,
\qquad
Q_{2n-1}(y)>0
\qquad (y\in\mathbb R).
\]
\end{proposition}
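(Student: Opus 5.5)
We prove positivity directly for $P_N$ and $Q_N$ with $N=2n-1$, reusing the machinery of the proof of \cref{prop:PQ-even-real-zero} but applied to the polynomials themselves rather than their derivatives. For $y\geq0$ positivity is immediate from the positive coefficients, so only $y=-x<0$ needs work. The beta integral gives $p!(2N-p)!/(2N+1)!=\int_0^1 t^p(1-t)^{2N-p}\,dt$. Combining this with the coefficient extraction \eqref{eq:PQ-coefficient-extraction} and summing the geometric series $\sum_{p\geq1}z^{p-1}=(1-z)^{-1}$, we obtain
\[
\frac{P_N(y)}{(2N+1)!}=\int_0^1 t(1-t)^{2N-1}\,[u^{N-1}]\frac{\mathcal A(u)}{1-y\frac{t}{1-t}u\mathcal A(u)}\,dt .
\]
In the same way,
\[
\frac{Q_N(y)}{(2N)!}=\int_0^1 t(1-t)^{2N-2}\,[u^{N-1}]\frac{\mathcal B(u)}{1-y\frac{t}{1-t}u\mathcal A(u)}\,dt .
\]
The index bookkeeping should be checked in both: the $p$-th term carries $u^{N-p}$ from the extraction and $u^{p-1}$ from the geometric factor, so the total is $u^{N-1}$.

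With $y=-x$ and $a=xt/(1-t)>0$, the integrands become $[u^{N-1}]G_a(u)$ and $[u^{N-1}]K_a(u)$, with $G_a,K_a$ as in \eqref{eq:Ga-Ka-definition}. The Sturm--Liouville construction from the preceding proof then represents both as moment generating functions. Using $G_a(u)=\mathbb E(e^{uX})$ with $X=\tau_JE$ from \eqref{eq:Ga-mgf}, and $K_a(u)=\mathbb E(e^{uY})$ from \eqref{eq:Ka-mgf}, we get
\[
[u^{N-1}]G_a=\frac{\mathbb E(X^{N-1})}{(N-1)!},\qquad [u^{N-1}]K_a=\frac{\mathbb E(Y^{N-1})}{(N-1)!}.
\]
Since $N-1=2n-2$ is even, both are strictly positive for $n\geq2$, because $X$ and $Y$ are not almost surely zero. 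For $n=1$ the polynomials are the constant $1$. Integrating over $t\in(0,1)$ against positive weights then yields $P_{2n-1}(-x)>0$ and $Q_{2n-1}(-x)>0$.

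The main obstacle is rigour in the interchange: passing from the finite sum to the coefficient of an analytic function inside the $t$-integral. This is purely formal-power-series manipulation, since only finitely many $p$ contribute to $[u^{N-1}]$, so it is harmless. Two further points need care. First, the moment formula requires the MGFs to converge near $0$. This holds because $\sum|\tau_j|<\infty$, so the $\tau_j$ are bounded and $X,Y$ have exponential moments in a neighbourhood of $0$. Second, there may be a negative eigenvalue, i.e.\ some $\tau_j<0$. This is harmless, since even moments of a nondegenerate real random variable are positive regardless of sign. Everything else is inherited verbatim from \cref{prop:PQ-even-real-zero}.
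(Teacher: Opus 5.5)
Your proposal is correct and follows the paper's proof essentially step for step. It uses the same beta-integral representations of $P_N(y)/(2N+1)!$ and $Q_N(y)/(2N)!$ as $t$-integrals of $[u^{N-1}]G_a$ and $[u^{N-1}]K_a$, followed by positivity of the even moments of $X$ and $Y$ from the Sturm--Liouville moment-generating-function representations. Your extra remarks on the finite-sum interchange, convergence near $u=0$, and the possibly negative eigenvalue are sound and only make explicit what the paper leaves implicit.
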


\begin{proof}
For $y\geq0$, the assertion follows immediately from the positivity
of the coefficients. It remains to consider $y<0$.

We use the notation introduced in the proof of
\cref{prop:PQ-even-real-zero}. Recall that
\(\mathcal A(u)=\tan\sqrt u/\sqrt u\) and \(\mathcal B(u)=\sec\sqrt u\).
By \eqref{eq:PQ-coefficient-extraction},
\(T(2N-p,p)/(2N-p)!=[u^{N-p}]\mathcal A(u)^p\). Using the beta integral
\(p!(2N-p)!/(2N+1)!=\int_0^1t^p(1-t)^{2N-p}\,dt\), we obtain
\begin{align*}
\frac{P_N(y)}{(2N+1)!}
&=
\sum_{p=1}^{N}y^{p-1}
\int_0^1
t^p(1-t)^{2N-p}
[u^{N-p}]\mathcal A(u)^p\,dt
\\
&=
\int_0^1
t(1-t)^{2N-1}
[u^{N-1}]
\mathcal A(u)
\sum_{p\geq1}
\left(
y\frac{t}{1-t}u\mathcal A(u)
\right)^{p-1}
dt.
\end{align*}
Hence
\begin{equation}
\label{eq:PN-value-coefficient}
\frac{P_N(y)}{(2N+1)!}
=
\int_0^1
t(1-t)^{2N-1}
[u^{N-1}]
\frac{\mathcal A(u)}
{1-y\frac{t}{1-t}u\mathcal A(u)}
\,dt.
\end{equation}

Similarly, from
\(S(2N-p-1,p-1)/(2N-p-1)!=[u^{N-p}]\mathcal B(u)\mathcal A(u)^{p-1}\)
and \(p!(2N-p-1)!/(2N)!=\int_0^1t^p(1-t)^{2N-p-1}\,dt\), we obtain
\begin{equation}
\label{eq:QN-value-coefficient}
\frac{Q_N(y)}{(2N)!}
=
\int_0^1
t(1-t)^{2N-2}
[u^{N-1}]
\frac{\mathcal B(u)}
{1-y\frac{t}{1-t}u\mathcal A(u)}
\,dt.
\end{equation}

Now write $y=-x$ with $x>0$ and, for $0<t<1$, put
\(a=xt/(1-t)>0\). By \eqref{eq:Ga-Ka-definition},
\[
\frac{\mathcal A(u)}
{1+au\mathcal A(u)}
=
G_a(u),
\qquad
\frac{\mathcal B(u)}
{1+au\mathcal A(u)}
=
K_a(u).
\]
Thus \eqref{eq:PN-value-coefficient} and
\eqref{eq:QN-value-coefficient} become
\[
\begin{aligned}
\frac{P_N(-x)}{(2N+1)!}
&=\int_0^1t(1-t)^{2N-1}[u^{N-1}]G_a(u)\,dt,\\
\frac{Q_N(-x)}{(2N)!}
&=\int_0^1t(1-t)^{2N-2}[u^{N-1}]K_a(u)\,dt.
\end{aligned}
\]

From \eqref{eq:Ga-mgf} and \eqref{eq:Ka-mgf},
\(G_a(u)=\mathbb E(e^{uX})\) and \(K_a(u)=\mathbb E(e^{uY})\). Hence,
for every $m\geq0$,
\[
[u^{2m}]G_a(u)=\frac{\mathbb E(X^{2m})}{(2m)!}>0,
\qquad
[u^{2m}]K_a(u)=\frac{\mathbb E(Y^{2m})}{(2m)!}>0.
\]
For $m=0$, both coefficients equal $1$, while for $m\geq1$
the inequalities are strict since $X$ and $Y$ are not almost surely
zero.

Finally, take $N=2n-1$. Then $N-1=2n-2$ is even. Therefore, for every $x>0$,
\[
P_{2n-1}(-x)>0,
\qquad
Q_{2n-1}(-x)>0.
\]
Together with the positivity for $y\geq0$, this proves
\[
P_{2n-1}(y)>0,
\qquad
Q_{2n-1}(y)>0
\qquad (y\in\mathbb R).
\]
Hence neither $P_{2n-1}$ nor $Q_{2n-1}$ has a real zero.
\end{proof}

\begin{lemma}[Dominant spectral rates]
\label[lemma]{lem:PQ-dominant-spectral-rates}
Let $c>0$. For $a>0$, put
\[
D_a(u):=\cos\sqrt u+a\sqrt u\,\sin\sqrt u,
\]
and
\[
G_a(u):=
\frac{\sin\sqrt u}{\sqrt u\,D_a(u)},
\qquad
K_a(u):=\frac1{D_a(u)}.
\]
Let $-\kappa(a)^2$ denote the unique negative zero of $D_a$, and let
$\mu_1(a)^2$ denote its smallest positive zero. Then
\[
a=\frac{\coth\kappa}{\kappa},
\qquad
a=-\frac{\cot\mu_1}{\mu_1},
\qquad
\frac{\pi}{2}<\mu_1<\pi.
\]

For $N=2n$, write
\[
\begin{cases}
\dfrac{P_{2n}(-c)}{(4n+1)!}
=
\mathcal A_n^{P}(c)-\mathcal B_n^{P}(c)
+\mathcal E_n^{P}(c),\\[2ex]
\dfrac{Q_{2n}(-c)}{(4n)!}
=
\mathcal A_n^{Q}(c)-\mathcal B_n^{Q}(c)
+\mathcal E_n^{Q}(c),
\end{cases}
\]
where $\mathcal A_n^{P},\mathcal A_n^{Q}>0$ are the contributions
of the smallest positive pole $\mu_1(a)^2$, and
$\mathcal B_n^{P},\mathcal B_n^{Q}>0$ are the contributions of the
negative pole $-\kappa(a)^2$. Then
\begin{equation}
\label{eq:PQ-positive-spectral-rate}
\lim_{n\to\infty}
\left(\mathcal A_n^{P}(c)\right)^{1/(4n)}
=
\lim_{n\to\infty}
\left(\mathcal A_n^{Q}(c)\right)^{1/(4n)}
=
\frac{2}{\pi},
\end{equation}
whereas
\begin{equation}
\label{eq:PQ-negative-spectral-rate}
\lim_{n\to\infty}
\left(\mathcal B_n^{P}(c)\right)^{1/(4n)}
=
\lim_{n\to\infty}
\left(\mathcal B_n^{Q}(c)\right)^{1/(4n)}
=
\frac1{R_-(c)},
\end{equation}
with
\begin{equation}
\label{eq:Rminus-definition}
R_-(c):=
\min_{\kappa>0}
\left(
\kappa+\frac{\coth\kappa}{c}
\right).
\end{equation}
Moreover,
\begin{equation}
\label{eq:PQ-spectral-remainder}
\limsup_{n\to\infty}
\left|\mathcal E_n^{P}(c)\right|^{1/(4n)}
\leq\frac{2}{3\pi}
<
\frac{2}{\pi},
\qquad
\limsup_{n\to\infty}
\left|\mathcal E_n^{Q}(c)\right|^{1/(4n)}
\leq\frac{2}{3\pi}
<
\frac{2}{\pi}.
\end{equation}
\end{lemma}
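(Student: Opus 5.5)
We start from the representations \eqref{eq:PN-value-coefficient} and \eqref{eq:QN-value-coefficient} with $N=2n$, $y=-c$, $a=ct/(1-t)$:
\[
\frac{P_{2n}(-c)}{(4n+1)!}=\int_0^1 t(1-t)^{4n-1}[u^{2n-1}]G_a(u)\,dt,\qquad
\frac{Q_{2n}(-c)}{(4n)!}=\int_0^1 t(1-t)^{4n-2}[u^{2n-1}]K_a(u)\,dt.
\]
By \eqref{eq:Ga-spectral-representation} and the Hadamard product \eqref{eq:Da-Hadamard-product}, $G_a$ and $K_a$ are meromorphic with simple poles at the eigenvalues $\lambda_j$. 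These are the zeros of $D_a$: one negative zero $-\kappa^2$, characterized by $\cosh\kappa-a\kappa\sinh\kappa=0$, i.e. $a=\coth\kappa/\kappa$, and positive zeros $\mu_j^2$ with $\cos\mu+a\mu\sin\mu=0$. Uniqueness of the negative zero follows because $\kappa\mapsto \coth\kappa/\kappa$ is strictly decreasing from $\infty$ to $0$. For the positive zeros, $-\cot\mu/\mu$ increases on $(\pi/2,\pi)$ from $0$ to $+\infty$ and is negative on $(0,\pi/2)$; hence $\mu_1\in(\pi/2,\pi)$ and $\mu_2>\pi$.

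We then split the coefficient by partial fractions. With residues $r_j(a)$ (equal to $w_j$ for $G_a$, and explicitly computable from $D_a'$ for $K_a$), we have $[u^{m}]\sum_j r_j/(1-u/\lambda_j)=\sum_j r_j\lambda_j^{-m}$. With $m=2n-1$ odd, the negative pole contributes $-r_-\kappa^{-2(2n-1)}$ and the first positive pole contributes $+r_1\mu_1^{-2(2n-1)}$. Integrating these against the beta weights defines $\mathcal A_n$ and $\mathcal B_n$, and the remaining poles define $\mathcal E_n$. Positivity of the residues follows for $G_a$ from \eqref{eq:wj-positive}; for $K_a$ it follows from the sign of $D_a'$ at the simple zeros, which we must check separately. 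For the remainder, $\mu_j>\pi$ for $j\ge2$ gives $|\lambda_j^{-(2n-1)}|\le \mu_2^{-2(2n-1)}$. We must sum this uniformly in $a$, using $\sum w_j=1$ for $G_a$ and eigenvalue asymptotics for $K_a$. The real obstacle is uniformity near $t\to0$ and $t\to1$, i.e. $a\to0$ and $a\to\infty$. As $a\to0$, $\mu_j\to(j-\tfrac12)\pi$, so $\mu_2\to3\pi/2$; as $a$ grows, $\mu_2$ increases toward $2\pi$. Combined with $(1-t)^{4n}\le1$, this gives the bound $(2/(3\pi))^{4n}$ up to subexponential factors, which is \eqref{eq:PQ-spectral-remainder}.

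The rates come from a Laplace-type analysis of one-dimensional integrals. For $\mathcal A_n$, we have $\int_0^1 t(1-t)^{4n-1}r_1(a)\mu_1(a)^{-4n+2}\,dt$. Since $\mu_1(a)\downarrow\pi/2$ as $a\to0$, i.e. $t\to0$, and $(1-t)\le1$, the integrand's $4n$-th root is maximized in the limit $t\to0$ with value $2/\pi$. Choosing $t=1/n$ gives a matching lower bound, because $r_1(a)\to$ a positive limit (the $a=0$ secant/tangent residue) and polynomial factors disappear under the $1/(4n)$ root; this yields \eqref{eq:PQ-positive-spectral-rate}. For $\mathcal B_n$, the integrand behaves like $\exp\bigl(4n[\log(1-t)-\log\kappa(a)]\bigr)$. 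We reparametrize by $\kappa$: from $a=ct/(1-t)=\coth\kappa/\kappa$ we get $1-t=c/(c+a)$, so $(1-t)/\kappa=c/(c\kappa+\coth\kappa)=1/(\kappa+\coth\kappa/c)$. Hence $\limsup(\mathcal B_n)^{1/(4n)}=\sup_\kappa 1/(\kappa+\coth\kappa/c)=1/R_-(c)$, where the minimum in \eqref{eq:Rminus-definition} exists because the expression tends to $\infty$ at both ends. The matching lower bound follows from continuity near the minimizer and positivity of $r_-$, which gives \eqref{eq:PQ-negative-spectral-rate} for both families. The same computation applies to $Q$ with the exponent $4n-2$ and the residues of $K_a$.

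The main obstacle is the uniform control of the residues and of the tail pole sum for $K_a$ as $a\to0,\infty$. The residues could blow up, so we need explicit formulas via $D_a'(\lambda_j)$ and the bounds $|\cos\mu_j|,|\sin\mu_j|\le1$.
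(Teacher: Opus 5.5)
Your proposal is correct, and its skeleton is the paper's: the same beta-integral representations, the same split into the $\mu_1^2$-pole, the $-\kappa^2$-pole and a remainder, the same reparametrization $(1-t)/\kappa=1/(\kappa+\coth\kappa/c)$, and the same $P$-remainder bound via $\sum_j w_j=1$.

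\textbf{Where you differ: the remainder $\mathcal E_n^Q$.}

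\emph{The paper's route.} The paper subtracts only the two relevant poles, $H_a=K_a-v_1/(1-u/\mu_1^2)-v_-/(1+u/\kappa^2)$, and applies a Cauchy estimate on a circle $|u|=r$ with $\pi^2<r<9\pi^2/4$. The price is a uniform-in-$a$ bound for $H_a$ on that circle. It is obtained from continuity on compact $a$-intervals plus two separate limiting analyses ($K_a\to\sec\sqrt u$ as $a\to0^+$, $K_a\to0$ as $a\to\infty$). The rate $2/(3\pi)$ appears only after letting $r\uparrow 9\pi^2/4$.

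\emph{Your route.} You expand $K_a$ over all its poles and bound every residue explicitly. This is cleaner than you fear. From $D_a'(\mu^2)=\frac{\sin\mu}{2\mu}(a-1-a^2\mu^2)$ and $a=-\cot\mu/\mu$, the weights are $r_j=2\sin\mu_j/(\mu_j+\sin\mu_j\cos\mu_j)$, so $|r_j|\le 2/(\mu_j-\tfrac12)$. For the negative pole, $v_-=2/(\cosh\kappa+\kappa/\sinh\kappa)\in(0,2)$. So no residue blows up as $a\to0$ or $a\to\infty$. With $\mu_j\ge(j-\tfrac12)\pi$, the tail $\sum_{j\ge2}|r_j|\mu_j^{-2(2n-1)}$ is $O\bigl((3\pi/2)^{-(4n-2)}\bigr)$ uniformly in $a$. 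This gives the bound directly, with no compactness argument in $a$ and no limit in $r$.

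\emph{What your route costs.} You must justify $[u^m]K_a=\sum_j r_j\lambda_j^{-m}$ for $m\ge1$. This does not follow from the Hadamard product. For each fixed $a>0$, use a contour argument on the circles $|\sqrt u|=(k+\tfrac12)\pi$. These stay at distance $\pi/2$ from $\pi\mathbb Z$, so $|a\sqrt u\,\sin\sqrt u|$ dominates $|\cos\sqrt u|$ there, $|D_a|\to\infty$, and the boundary integrals vanish.

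\textbf{Points to write out.}
\begin{itemize}
\item The bound you actually need is $\mu_2(a)>3\pi/2$ for every $a>0$, not merely $\mu_2>\pi$; the latter would only give $1/\pi$. It follows because $\cos\mu+a\mu\sin\mu<0$ on $(\pi,3\pi/2)$.
\item ``Choosing $t=1/n$'' should be an integral over a window such as $[1/n,2/n]$. There, $\mu_1(a)\le\frac{\pi}{2}(1+\pi a)$ because $\sin(\mu_1-\frac{\pi}{2})=a\mu_1\sin\mu_1\le\pi a$. Together with $(1-t)^{4n}\ge e^{-C}$ and the positive limits $w_1^+\to 8/\pi^2$, $v_1\to4/\pi$ as $a\to0$, this gives $\mathcal A_n\gtrsim n^{-2}(2/\pi)^{4n}$. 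That is a slightly sharper lower bound than the paper's fixed-$\varepsilon$ window argument.
\end{itemize}
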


\begin{proof}
From the coefficient representations established above, for $N\geq1$,
\begin{equation}
\label{eq:PQ-negative-value-integrals}
\begin{cases}
\dfrac{P_N(-c)}{(2N+1)!}
=
\displaystyle
\int_0^1
t(1-t)^{2N-1}
[u^{N-1}]G_a(u)\,dt,\\[3ex]
\dfrac{Q_N(-c)}{(2N)!}
=
\displaystyle
\int_0^1
t(1-t)^{2N-2}
[u^{N-1}]K_a(u)\,dt,
\end{cases}
\qquad
a=\frac{ct}{1-t}.
\end{equation}

The zeros of $D_a$ are the eigenvalues of the regular
Sturm--Liouville problem
\[
-f''=\lambda f,
\qquad f'(0)=0,
\qquad f(1)-af'(1)=0.
\]
There is exactly one negative eigenvalue, \(\lambda_-(a)=-\kappa(a)^2\),
whose defining equation is \(\cosh\kappa-a\kappa\sinh\kappa=0\). Hence
\begin{equation}
\label{eq:a-kappa-relation}
a=\frac{\coth\kappa}{\kappa}.
\end{equation}

The positive eigenvalues are $\mu_j(a)^2$, where
\(\cos\mu_j+a\mu_j\sin\mu_j=0\). For every $j\geq1$,
\((j-\tfrac12)\pi<\mu_j(a)<j\pi\). In particular,
\(\pi/2<\mu_1<\pi\) and \(a=-\cot\mu_1/\mu_1\), and
\begin{equation}
\label{eq:second-positive-pole}
\mu_j(a)>\frac{3\pi}{2},
\qquad j\geq2.
\end{equation}

For $G_a$, the spectral theorem gives
\begin{equation}
\label{eq:G-spectral-representation}
G_a(u)
=
\frac{w_-(a)}{1+u/\kappa(a)^2}
+
\sum_{j\geq1}
\frac{w_j^+(a)}{1-u/\mu_j(a)^2},
\end{equation}
where \(w_-(a)>0\), \(w_j^+(a)>0\), and
\(w_-(a)+\sum_{j\geq1}w_j^+(a)=1\). Consequently, for odd $m$,
\begin{equation}
\label{eq:G-odd-coefficients}
[u^m]G_a(u)
=
\sum_{j\geq1}
\frac{w_j^+(a)}{\mu_j(a)^{2m}}
-
\frac{w_-(a)}{\kappa(a)^{2m}}.
\end{equation}

We next isolate the two relevant poles of $K_a$. Since the zeros of
$D_a$ are simple, the normalized coefficient associated with a zero
$\lambda$ is \(-1/(\lambda D_a'(\lambda))\). At the negative zero,
\[
\begin{aligned}
D_a'(-\kappa^2)
&=\frac{\sinh\kappa}{2\kappa}\left(a+a^2\kappa^2-1\right),\\
a^2\kappa^2
&=\coth^2\kappa=1+\frac1{\sinh^2\kappa},
\end{aligned}
\]
where the second identity follows from \eqref{eq:a-kappa-relation}; hence
\(D_a'(-\kappa^2)>0\). Thus
\begin{equation}
\label{eq:vminus}
v_-(a)
:=
\frac1{\kappa(a)^2D_a'(-\kappa(a)^2)}
=
\frac{2}
{\cosh\kappa+\kappa/\sinh\kappa}
>0.
\end{equation}

At a positive zero $\mu^2$,
\[
D_a'(\mu^2)
=\frac{\sin\mu}{2\mu}\left(a-1-a^2\mu^2\right),
\qquad
-\frac1{\mu^2D_a'(\mu^2)}
=\frac{2\sin\mu}{\mu+\sin\mu\cos\mu},
\]
where the second identity uses $a=-\cot\mu/\mu$. Thus, at the smallest positive pole,
\begin{equation}
\label{eq:vplus}
v_1(a)
:=
-\frac1{\mu_1(a)^2D_a'(\mu_1(a)^2)}
=
\frac{2\sin\mu_1}
{\mu_1+\sin\mu_1\cos\mu_1}
>0.
\end{equation}

Define
\begin{equation}
\label{eq:Ka-remainder-function}
H_a(u)
:=
K_a(u)
-
\frac{v_1(a)}{1-u/\mu_1(a)^2}
-
\frac{v_-(a)}{1+u/\kappa(a)^2}.
\end{equation}
After removal of these two simple poles, $H_a$ is analytic in
\(|u|<\mu_2(a)^2\). By \eqref{eq:second-positive-pole},
\(\mu_2(a)^2>9\pi^2/4\) for $a>0$. Fix
\(\pi^2<r<9\pi^2/4\). We claim that
\begin{equation}
\label{eq:Ha-uniform-bound}
M_r
:=
\sup_{a>0}\sup_{|u|=r}|H_a(u)|
<\infty.
\end{equation}
On every compact interval $[\delta,A]\subset(0,\infty)$ this follows
from the continuous dependence of the simple eigenvalues and their
residues on $a$. It remains to control the limits $a\to0^+$ and
$a\to\infty$.

As $a\to0^+$,
\[
\kappa(a)\to\infty,
\qquad
\mu_1(a)\to\frac{\pi}{2},
\qquad
K_a(u)\longrightarrow\sec\sqrt u,
\]
with the convergence of $K_a$ uniform on $|u|=r$, and moreover
\[
v_-(a)\to0,
\qquad
v_1(a)\to\frac4\pi.
\]
Since $|u|=r$ does not meet any pole of $\sec\sqrt u$, the convergence
of all terms in \eqref{eq:Ka-remainder-function} is uniform on that
circle. Hence $H_a$ is uniformly bounded there for all sufficiently
small $a$.

As $a\to\infty$,
\[
\kappa(a)\to0,
\qquad
\mu_1(a)\to\pi,
\qquad
v_1(a)\to0.
\]
Because \(\pi^2<r<9\pi^2/4\), the function $\sqrt u\,\sin\sqrt u$ has no
zero on $|u|=r$, and therefore \(K_a(u)\to0\) uniformly there. Furthermore,
\[
\frac{v_-(a)}{1+u/\kappa(a)^2}\longrightarrow0,
\qquad
\frac{v_1(a)}{1-u/\mu_1(a)^2}\longrightarrow0
\]
uniformly on $|u|=r$. Hence $H_a$ is uniformly bounded on $|u|=r$ for
all sufficiently large $a$. Together with the compact-$a$ argument,
this proves \eqref{eq:Ha-uniform-bound}.

Cauchy's estimate now gives, uniformly in $a>0$,
\begin{equation}
\label{eq:Ha-Cauchy}
\boxed{
\bigl|[u^m]H_a(u)\bigr|
\leq
M_r r^{-m}.
}
\end{equation}

We now set $N=2n$ and $m=N-1=2n-1$. For the smallest positive pole,
put \(\Phi_+(t):=(1-t)/\mu_1(a)\), where \(a=ct/(1-t)\). Since
\(a=-\cot\mu_1/\mu_1\), we obtain
\begin{equation}
\label{eq:positive-phase}
\Phi_+(t)
=
\frac1{
\mu_1-\dfrac{\cot\mu_1}{c}
}.
\end{equation}
Moreover,
\(\frac{d}{d\mu}(\mu-\cot\mu/c)=1+\csc^2\mu/c>0\). Therefore
\begin{equation}
\label{eq:positive-phase-sup}
\sup_{0<t<1}\Phi_+(t)
=
\left[
\inf_{\pi/2<\mu<\pi}
\left(
\mu-\frac{\cot\mu}{c}
\right)
\right]^{-1}
=
\frac2\pi.
\end{equation}

The corresponding contributions are
\[
\begin{aligned}
\mathcal A_n^P(c)&=\int_0^1t(1-t)w_1^+(a)\,\Phi_+(t)^{4n-2}\,dt,\\
\mathcal A_n^Q(c)&=\int_0^1t\,v_1(a)\,\Phi_+(t)^{4n-2}\,dt.
\end{aligned}
\]
Their amplitudes are nonnegative and bounded on $(0,1)$ and are
strictly positive on every compact subinterval of $(0,1)$. Although
the supremum of $\Phi_+$ is attained only in the limit $t\downarrow0$,
this causes no difficulty: for every $\varepsilon>0$ one may choose
$t_\varepsilon>0$ such that
\(\Phi_+(t_\varepsilon)>2/\pi-\varepsilon\). By continuity there is a compact interval
$I_\varepsilon\Subset(0,1)$ containing $t_\varepsilon$ on which
$\Phi_+>2/\pi-2\varepsilon$, while the corresponding amplitude has
a strictly positive minimum. This gives the required lower
exponential bound. The upper bound follows from
$\Phi_+(t)\le2/\pi$. Hence the root form of Laplace's principle yields
\[
\lim_{n\to\infty}
\left(\mathcal A_n^P(c)\right)^{1/(4n)}
=
\lim_{n\to\infty}
\left(\mathcal A_n^Q(c)\right)^{1/(4n)}
=
\frac2\pi.
\]
This proves \eqref{eq:PQ-positive-spectral-rate}.

For the negative pole, put \(\Phi_-(t):=(1-t)/\kappa(a)\). Using
\(a=ct/(1-t)=\coth\kappa/\kappa\), we obtain
\begin{equation}
\label{eq:negative-phase}
\Phi_-(t)
=
\frac1{
\kappa+\dfrac{\coth\kappa}{c}
}.
\end{equation}
Thus \(\sup_{0<t<1}\Phi_-(t)=1/R_-(c)\), where
\(R_-(c)=\min_{\kappa>0}(\kappa+\coth\kappa/c)\). Now
\(\frac{d}{d\kappa}(\kappa+\coth\kappa/c)=1-1/(c\sinh^2\kappa)\).
Since this derivative tends to $-\infty$ as $\kappa\downarrow0$ and
to $1$ as $\kappa\to\infty$, it has exactly one zero. Thus the
minimum is attained at a unique point $x_c>0$, characterized by
\begin{equation}
\label{eq:xc-equation}
\sinh^2x_c=\frac1c.
\end{equation}
Furthermore,
\(\frac{d^2}{d\kappa^2}(\kappa+\coth\kappa/c)=2\coth\kappa/(c\sinh^2\kappa)>0\),
so the minimum is strict and nondegenerate.

The two negative-pole contributions can be written as
\[
\begin{aligned}
\mathcal B_n^P(c)&=\int_0^1t(1-t)w_-(a)\,\Phi_-(t)^{4n-2}\,dt,\\
\mathcal B_n^Q(c)&=\int_0^1t\,v_-(a)\,\Phi_-(t)^{4n-2}\,dt.
\end{aligned}
\]
The unique maximizer of $\Phi_-$ corresponds to the finite value
$\kappa=x_c$ and therefore to a point $t_c\in(0,1)$. Both amplitudes
are continuous and strictly positive at $t_c$. Hence the interior
form of Laplace's principle gives
\[
\lim_{n\to\infty}
\left(\mathcal B_n^P(c)\right)^{1/(4n)}
=
\lim_{n\to\infty}
\left(\mathcal B_n^Q(c)\right)^{1/(4n)}
=
\frac1{R_-(c)}.
\]
This proves \eqref{eq:PQ-negative-spectral-rate}.

It remains to estimate the remainders.

For $P_{2n}$, \eqref{eq:G-odd-coefficients},
\eqref{eq:second-positive-pole}, and \(\sum_{j\geq2}w_j^+(a)\leq1\) give
\[
\left|
\sum_{j\geq2}
\frac{w_j^+(a)}{\mu_j(a)^{4n-2}}
\right|
\leq
\left(\frac{2}{3\pi}\right)^{4n-2}.
\]
Consequently,
\[
|\mathcal E_n^P(c)|
\leq
\left(\frac{2}{3\pi}\right)^{4n-2}
\int_0^1t(1-t)^{4n-1}\,dt.
\]
Since \(\int_0^1t(1-t)^{4n-1}\,dt=1/((4n)(4n+1))\), it follows that
\begin{equation}
\label{eq:EP-bound}
\limsup_{n\to\infty}
|\mathcal E_n^P(c)|^{1/(4n)}
\leq
\frac{2}{3\pi}.
\end{equation}

For $Q_{2n}$, the remainder is precisely the contribution of
$H_a$. By \eqref{eq:Ha-Cauchy},
\(\lvert[u^{2n-1}]H_a(u)\rvert\leq M_r r^{-(2n-1)}\) uniformly in $a>0$.
Hence
\[
|\mathcal E_n^Q(c)|
\leq
M_r r^{-(2n-1)}
\int_0^1t(1-t)^{4n-2}\,dt.
\]
Since \(\int_0^1t(1-t)^{4n-2}\,dt=1/((4n-1)(4n))\), we obtain
\[
\limsup_{n\to\infty}
|\mathcal E_n^Q(c)|^{1/(4n)}
\leq
r^{-1/2}.
\]
Because this holds for every \(\pi^2<r<9\pi^2/4\), letting $r\uparrow9\pi^2/4$ yields
\begin{equation}
\label{eq:EQ-bound}
\limsup_{n\to\infty}
|\mathcal E_n^Q(c)|^{1/(4n)}
\leq
\frac{2}{3\pi}.
\end{equation}

Combining \eqref{eq:EP-bound} and \eqref{eq:EQ-bound} gives
\[
\limsup_{n\to\infty}|\mathcal E_n^P(c)|^{1/(4n)}
\leq\frac{2}{3\pi}<\frac2\pi,
\qquad
\limsup_{n\to\infty}|\mathcal E_n^Q(c)|^{1/(4n)}
\leq\frac{2}{3\pi}<\frac2\pi.
\]
This proves \eqref{eq:PQ-spectral-remainder} and completes the proof.
\end{proof}

\begin{proposition}[Common limit of the real zeros]
\label[proposition]{prop:PQ-real-zero-limit}
Let $\rho_n,\sigma_n>0$ be defined by
\(P_{2n}(-\rho_n)=0\) and \(Q_{2n}(-\sigma_n)=0\). There exists a constant $c_*>0$ such that
\[
\lim_{n\to\infty}\rho_n
=
\lim_{n\to\infty}\sigma_n
=
c_*.
\]
More precisely, let $x_*>0$ be the unique solution of
\begin{equation}
\label{eq:xstar-equation}
x_*+\sinh x_*\cosh x_*
=
\frac{\pi}{2},
\end{equation}
equivalently, \(x_*+\tfrac12\sinh(2x_*)=\pi/2\). Then
\begin{equation}
\label{eq:cstar-definition}
c_*=\frac{1}{\sinh^2x_*}
=
1.897670091183219518\ldots,
\end{equation}
and consequently
\[
\begin{cases}
\displaystyle
\lim_{n\to\infty}
\bigl(\text{the real zero of }P_{2n}\bigr)
=-c_*,\\[1ex]
\displaystyle
\lim_{n\to\infty}
\bigl(\text{the real zero of }Q_{2n}\bigr)
=-c_*.
\end{cases}
\]
\end{proposition}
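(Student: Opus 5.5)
The plan is to use \cref{lem:PQ-dominant-spectral-rates} as a sign-determining device for $P_{2n}(-c)$ and $Q_{2n}(-c)$ at each fixed $c>0$.

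\textbf{Step 1: sign of $P_{2n}(-c)$ for fixed $c$.} Fix $c>0$. The lemma writes $P_{2n}(-c)/(4n+1)!=\mathcal A_n^P-\mathcal B_n^P+\mathcal E_n^P$. The positive part has root rate $2/\pi$, the negative part has rate $1/R_-(c)$, and the remainder has rate at most $2/(3\pi)$. Consequently:
\begin{itemize}
\item if $R_-(c)>\pi/2$, then $P_{2n}(-c)>0$ for all large $n$;
\item if $R_-(c)<\pi/2$, then $P_{2n}(-c)<0$ for all large $n$.
\end{itemize}
The same dichotomy holds for $Q_{2n}$. By \cref{prop:PQ-even-real-zero}, $P_{2n}$ is strictly increasing with unique zero $-\rho_n$. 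Hence $P_{2n}(-c)>0$ means $\rho_n>c$, and $P_{2n}(-c)<0$ means $\rho_n<c$.

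\textbf{Step 2: the threshold function.} Analyze $R_-(c)=\min_{\kappa>0}(\kappa+\coth\kappa/c)$. The minimizer is $x_c$ with $\sinh^2x_c=1/c$. Substituting $\coth x_c/c=\cosh x_c\sinh x_c$ (using $1/c=\sinh^2x_c$) gives
\[
R_-(c)=x_c+\sinh x_c\cosh x_c .
\]
Two monotonicities then finish the comparison:
\begin{itemize}
\item $R_-$ is strictly decreasing in $c$, since it is a minimum of functions each strictly decreasing in $c$;
\item equivalently, $c\mapsto x_c$ is decreasing and $x\mapsto x+\sinh x\cosh x$ is increasing from $0$ to $\infty$.
\end{itemize}
Therefore $R_-(c)=\pi/2$ exactly when $x_c=x_*$, i.e.\ when $c=c_*=1/\sinh^2x_*$. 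Moreover $R_-(c)>\pi/2$ for $c<c_*$ and $R_-(c)<\pi/2$ for $c>c_*$.

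\textbf{Step 3: squeeze.} Given $\varepsilon>0$, apply Step 1 at $c=c_*-\varepsilon$ and at $c=c_*+\varepsilon$. For large $n$ this gives
\[
P_{2n}(-(c_*-\varepsilon))>0>P_{2n}(-(c_*+\varepsilon)).
\]
By monotonicity, $c_*-\varepsilon<\rho_n<c_*+\varepsilon$. The argument for $\sigma_n$ is identical. The numerical value of $c_*$ comes from solving \eqref{eq:xstar-equation}.

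\textbf{Main obstacle.} The delicate point is Step 1: turning root-rate limits into a definite sign. I would argue as follows in the case $R_-(c)>\pi/2$:
\begin{itemize}
\item choose $\delta>0$ with $1/R_-(c)+\delta<2/\pi-\delta$;
\item for large $n$, $\mathcal A_n^P\geq(2/\pi-\delta)^{4n}$;
\item for large $n$, both $\mathcal B_n^P$ and $|\mathcal E_n^P|$ are at most $(1/R_-(c)+\delta)^{4n}$;
\item hence $\mathcal A_n^P-\mathcal B_n^P+\mathcal E_n^P>0$ eventually.
\end{itemize}
The case $R_-(c)<\pi/2$ is symmetric, with $\mathcal B_n^P$ dominating both $\mathcal A_n^P$ and the remainder.

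The gap only matters where $1/R_-(c)$ and $2/\pi$ coincide, that is, at $c=c_*$ itself. The squeeze in Step 3 avoids evaluating there, so only $c\ne c_*$ is needed. One should also confirm that the remainder bound $2/(3\pi)$ lies below $\max(2/\pi,1/R_-(c))$, which is automatic since it is less than $2/\pi$.
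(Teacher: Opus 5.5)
Your proposal is correct and follows the paper's proof essentially step for step: the same reduction $R_-(c)=x_c+\sinh x_c\cosh x_c$, the same threshold at $c_*$ via monotonicity of $x\mapsto x+\sinh x\cosh x$, the same exponential-domination sign argument from \cref{lem:PQ-dominant-spectral-rates} at $c\neq c_*$, and the same squeeze using the strict monotonicity of $P_{2n},Q_{2n}$. One small fix is needed in your $\delta$-choice for the case $R_-(c)>\pi/2$. There $|\mathcal E_n^P|$ should be bounded by $(\max\{1/R_-(c),2/(3\pi)\}+\delta)^{4n}$, not $(1/R_-(c)+\delta)^{4n}$, since $1/R_-(c)$ may lie below $2/(3\pi)$; the paper handles this by writing $2/\pi>\max\{1/R_-(c),2/(3\pi)\}$.
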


\begin{proof}
By \eqref{eq:xc-equation}, \(c=1/\sinh^2x_c\). Substituting this into \eqref{eq:Rminus-definition} gives
\begin{align}
R_-(c)
&=
x_c+\frac{\coth x_c}{c}
\nonumber\\
&=
x_c+\sinh x_c\cosh x_c.
\label{eq:Rminus-xc}
\end{align}

Consider \(F(x):=x+\sinh x\cosh x\) for $x>0$. Then
\[
F'(x)=1+\cosh^2x+\sinh^2x=2\cosh^2x>0,
\qquad
\lim_{x\downarrow0}F(x)=0,
\qquad
\lim_{x\to\infty}F(x)=\infty.
\]
Hence $F$ is strictly increasing on $(0,\infty)$ and there exists a unique
$x_*>0$ such that \(F(x_*)=\pi/2\), which is precisely \eqref{eq:xstar-equation}.

On the other hand, the map \(x\mapsto1/\sinh^2x\) is strictly decreasing
from $(0,\infty)$ onto $(0,\infty)$. Therefore
\(c_*:=1/\sinh^2x_*\) is the unique positive value of $c$ for which
\(R_-(c_*)=\pi/2\); numerically,
\(c_*=1.897670091183219518\ldots\). Furthermore, since
\(c=1/\sinh^2x_c\) is strictly decreasing as a function of $x_c$, we have
\(c<c_*\iff x_c>x_*\) and \(c>c_*\iff x_c<x_*\). Since $F$ is strictly increasing and
$R_-(c)=F(x_c)$, it follows that
\begin{equation}
\label{eq:Rminus-threshold}
\begin{cases}
R_-(c)>\dfrac{\pi}{2}, & 0<c<c_*,\\[1.5ex]
R_-(c)=\dfrac{\pi}{2}, & c=c_*,\\[1.5ex]
R_-(c)<\dfrac{\pi}{2}, & c>c_*.
\end{cases}
\end{equation}

Fix first $0<c<c_*$. By
\eqref{eq:PQ-positive-spectral-rate},
\eqref{eq:PQ-negative-spectral-rate},
\eqref{eq:PQ-spectral-remainder}, and
\eqref{eq:Rminus-threshold},
\[
\frac2\pi
>
\max\left\{
\frac1{R_-(c)},
\frac{2}{3\pi}
\right\}.
\]
Thus the contribution of the smallest positive pole dominates both
the negative-pole contribution and the remainder exponentially.
Consequently,
\[
\frac{
\mathcal B_n^P(c)+|\mathcal E_n^P(c)|
}{
\mathcal A_n^P(c)
}
\longrightarrow0,
\qquad
\frac{
\mathcal B_n^Q(c)+|\mathcal E_n^Q(c)|
}{
\mathcal A_n^Q(c)
}
\longrightarrow0.
\]
Hence, for all sufficiently large $n$,
\begin{equation}
\label{eq:PQ-sign-below-cstar}
\begin{cases}
P_{2n}(-c)>0,\\
Q_{2n}(-c)>0.
\end{cases}
\end{equation}

Now fix $c>c_*$. Then \(1/R_-(c)>2/\pi>2/(3\pi)\). Hence the negative-pole contribution dominates both the smallest
positive-pole contribution and the remainder exponentially. Thus
\[
\frac{
\mathcal A_n^P(c)+|\mathcal E_n^P(c)|
}{
\mathcal B_n^P(c)
}
\longrightarrow0,
\qquad
\frac{
\mathcal A_n^Q(c)+|\mathcal E_n^Q(c)|
}{
\mathcal B_n^Q(c)
}
\longrightarrow0.
\]
Consequently, for all sufficiently large $n$,
\begin{equation}
\label{eq:PQ-sign-above-cstar}
\begin{cases}
P_{2n}(-c)<0,\\
Q_{2n}(-c)<0.
\end{cases}
\end{equation}

Let $\varepsilon>0$ be such that $c_*-\varepsilon>0$. Applying
\eqref{eq:PQ-sign-below-cstar} with
$c=c_*-\varepsilon$ and
\eqref{eq:PQ-sign-above-cstar} with
$c=c_*+\varepsilon$, we obtain, for all sufficiently large $n$,
\[
\begin{cases}
P_{2n}(-c_*+\varepsilon)>0,
&
P_{2n}(-c_*-\varepsilon)<0,\\[1ex]
Q_{2n}(-c_*+\varepsilon)>0,
&
Q_{2n}(-c_*-\varepsilon)<0.
\end{cases}
\]
Since $P_{2n}$ and $Q_{2n}$ are strictly increasing on $\mathbb R$
and have unique real zeros $-\rho_n$ and $-\sigma_n$, respectively,
we obtain \(-c_*-\varepsilon<-\rho_n<-c_*+\varepsilon\) and
\(-c_*-\varepsilon<-\sigma_n<-c_*+\varepsilon\). Equivalently,
\[
\begin{cases}
c_*-\varepsilon<\rho_n<c_*+\varepsilon,\\
c_*-\varepsilon<\sigma_n<c_*+\varepsilon.
\end{cases}
\]
Thus, for every sufficiently small $\varepsilon>0$, these inequalities
hold for all sufficiently large $n$. For an arbitrary
$\varepsilon>0$, the conclusion is immediate as well, since one may
replace it by $\min\{\varepsilon,c_*/2\}$. Therefore
\[
\lim_{n\to\infty}\rho_n
=
\lim_{n\to\infty}\sigma_n
=
c_*.
\]
The corresponding real zeros consequently converge to $-c_*$.
\end{proof}

\subsection{Differential recurrences}

\begin{proposition}[Differential recurrences for $P_n$ and $Q_n$]
\label[proposition]{prop:PQ-polynomial-recurrences}
For $n\geq2$, the polynomials $P_n$ and $Q_n$ satisfy
\begin{equation}
\begin{dcases}
\displaystyle
P_n(y)=\frac{d}{dy}\left[
\frac{y}{1-y}
\left\{
(-1)^{n-1}\frac{2^{2n}(2^{2n}-1)}{2n}B_{2n}
-y^2\bigl(yP_{n-1}'(y)+2P_{n-1}(y)\bigr)
\right\}
\right],
\\[12pt]
\displaystyle
Q_n(y)=\frac{d}{dy}\left[
\frac{y}{1-y}
\left\{
(-1)^{n-1}E_{2n-2}
-y^2\bigl(yQ_{n-1}'(y)+Q_{n-1}(y)\bigr)
\right\}
\right],
\end{dcases}
\label{eq:PQ-polynomial-recurrences}
\end{equation}
with $P_1(y)=Q_1(y)=1$.
\end{proposition}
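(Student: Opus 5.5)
The plan is to integrate the claimed identity once and reduce it to a coefficient comparison that is exactly the coefficient recurrence \eqref{eq:unified-coefficient-recurrence}. Both $P_n$ and the bracketed expressions are polynomials, and the right-hand side of \eqref{eq:PQ-polynomial-recurrences} vanishes at $y=0$ before differentiation. So it suffices to prove the undifferentiated polynomial identities
\[
(1-y)\,U_n(y)=y\,P_n(0)-y^2\,\frac{d}{dy}\bigl(y^2P_{n-1}(y)\bigr),
\qquad
(1-y)\,V_n(y)=y\,Q_n(0)-y^2\,\frac{d}{dy}\bigl(yQ_{n-1}(y)\bigr),
\]
where $U_n(y):=\int_0^yP_n=\sum_{p=1}^n (p-1)!\,T(2n-p,p)\,y^p$ and $V_n(y):=\int_0^yQ_n=\sum_{p=1}^n(p-1)!\,S(2n-p-1,p-1)\,y^p$. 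Here I use $y(yP'+2P)=\frac{d}{dy}(y^2P)$ and $y(yQ'+Q)=y\frac{d}{dy}(yQ)$; the latter gives $y^2(yQ'+Q)=y^2\frac{d}{dy}(yQ)$ directly. By \cref{prop:PQ-edge-coefficients}, $P_n(0)=T(2n-1,1)$ equals the Bernoulli expression appearing in \eqref{eq:PQ-polynomial-recurrences}, and $Q_n(0)=S(2n-2,0)=(-1)^{n-1}E_{2n-2}$. Dividing by $1-y$ and differentiating then recovers the statement.

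Next I compare coefficients of $y^p$ in the tangent identity. For $p=1$ both sides give $T(2n-1,1)$. For $2\le p\le n$ the coefficient of $y^{p}$ in $y^2\frac{d}{dy}(y^2P_{n-1})$ is $(p-1)!\,T(2n-p,p-2)$, coming from the term $q=p-2$ of $P_{n-1}$. The identity therefore becomes
\[
(p-2)!\,T(2n-p+1,p-1)=(p-1)!\bigl[T(2n-p,p)+T(2n-p,p-2)\bigr].
\]
This is \eqref{eq:unified-coefficient-recurrence} with $\alpha=0$, $m=2n-p$ and $k=p-1$. The top coefficient $p=n+1$ needs a separate check. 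On the left only $-(n-1)!\,T(n,n)=-(n-1)!\,n!$ survives. On the right one gets $-n!\,T(n-1,n-1)=-n!(n-1)!$, using $T(n,n)=n!$ from the proof of \cref{prop:PQ-edge-coefficients}. All higher coefficients vanish on both sides by the convention $C_\alpha(m,k)=0$ for $m<k$.

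The secant case runs in parallel. The coefficient of $y^p$ on the right is $(p-1)\cdot(p-2)!\,S(2n-p-1,p-2)$, and the required relation is
\[
(p-2)!\,S(2n-p,p-2)=(p-1)!\,S(2n-p-1,p-1)+(p-1)!\,S(2n-p-1,p-3)\cdot\tfrac{p-2}{p-1}\cdot\!\!,
\]
with the factors to be matched against \eqref{eq:unified-coefficient-recurrence} for $\alpha=1$, $m=2n-p-1$, $k=p-2$, namely $S(m+1,k)=k\,S(m,k-1)+(k+1)\,S(m,k+1)$. The boundary terms $p=1$ and $p=n+1$ are handled with $S(n-1,n-1)=(n-1)!$.

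I expect the main obstacle to be purely bookkeeping. The secant side carries the asymmetric weights $k$ and $k+1$, so the relation displayed above must be rederived carefully rather than taken as written. Concretely, I will first recompute the coefficient of $y^p$ in $y^2\frac{d}{dy}(yQ_{n-1})$; if the weights do not match \eqref{eq:unified-coefficient-recurrence} with $\alpha=1$, I will check whether a shift of index or a different normalization of the antiderivative $V_n$ is needed. I will also verify the small cases against $P_2,P_3,Q_2,Q_3$ to fix signs and offsets.
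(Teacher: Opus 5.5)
Your strategy is the paper's: integrate once (your $U_n,V_n$ are $y\mathcal P_n,\,y\mathcal Q_n$ in the paper's notation), clear the factor $1-y$, and compare coefficients via \eqref{eq:unified-coefficient-recurrence}. The tangent half is correct and complete as written, including the edge coefficients $p=1$, $p=2$ (where $T(2n-2,0)=0$) and $p=n+1$.

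The secant half, however, starts from a false identity. Multiplying $V_n=\frac{y}{1-y}\{Q_n(0)-y^2(yQ_{n-1}'+Q_{n-1})\}$ by $1-y$ gives
\[
(1-y)V_n(y)=y\,Q_n(0)-y^3\frac{d}{dy}\bigl(yQ_{n-1}(y)\bigr).
\]
The exponent is $3$ because the prefactor $y$ is not absorbed here, as it was in $y(yP'+2P)=\frac{d}{dy}(y^2P)$. Your version with $y^2$ already fails for $n=2$: it yields $V_2=y$, i.e.\ $Q_2=1$ instead of $1+2y$. In general its right side has degree $n$, while the left side has the nonzero top coefficient $-(n-1)!\,S(n-1,n-1)=-((n-1)!)^2$ at $y^{n+1}$. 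Your stated coefficient $(p-1)(p-2)!\,S(2n-p-1,p-2)$ is also wrong; it vanishes identically by parity.

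With the factor $y^3$, the coefficient of $y^p$ on the right is $-(p-2)\,(p-2)!\,S(2n-p-1,p-3)$. The comparison then becomes
\[
(p-2)!\,S(2n-p,p-2)=(p-1)!\,S(2n-p-1,p-1)+(p-2)\,(p-2)!\,S(2n-p-1,p-3).
\]
After division by $(p-2)!$, this is exactly $S(m+1,k)=k\,S(m,k-1)+(k+1)\,S(m,k+1)$ with $m=2n-p-1$, $k=p-2$. It is also what your garbled display says once $(p-1)!\cdot\frac{p-2}{p-1}=(p-2)\,(p-2)!$ is simplified. So the relation you wrote down is right, but it does not follow from the identity you stated.

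With the corrected identity, the edge cases close:
\begin{itemize}
\item $p=1$ gives $Q_n(0)$ on both sides;
\item $p=2$ is the case $k=0$, namely $S(2n-2,0)=S(2n-3,1)$;
\item $p=n+1$ gives $-((n-1)!)^2$ on both sides, using $S(n-1,n-1)=(n-1)!$ and $S(n-2,n-2)=(n-2)!$.
\end{itemize}
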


\begin{proof}
Write \(P_n(y)=\sum_{p=1}^{n}a_{n,p}y^{p-1}\) and
\(Q_n(y)=\sum_{p=1}^{n}b_{n,p}y^{p-1}\), where
$a_{n,p}=p!\,T(2n-p,p)$ and $b_{n,p}=p!\,S(2n-p-1,p-1)$. From
\[
T(m+1,p)=pT(m,p-1)+pT(m,p+1),
\qquad
S(m+1,p)=pS(m,p-1)+(p+1)S(m,p+1),
\]
we obtain
\[
\begin{dcases}
\displaystyle
\frac{a_{n,p}}{p}-\frac{a_{n,p+1}}{p+1}
=p\,a_{n-1,p-1},
\\[8pt]
\displaystyle
\frac{b_{n,p}}{p}-\frac{b_{n,p+1}}{p+1}
=(p-1)b_{n-1,p-1},
\end{dcases}
\]
where $a_{n,n+1}=b_{n,n+1}=0$. Put
\(\mathcal P_n(y):=y^{-1}\int_0^yP_n(t)\,dt\) and
\(\mathcal Q_n(y):=y^{-1}\int_0^yQ_n(t)\,dt\). Then
\[
\mathcal P_n(y)=\sum_{p=1}^{n}\frac{a_{n,p}}{p}y^{p-1},
\qquad
\mathcal Q_n(y)=\sum_{p=1}^{n}\frac{b_{n,p}}{p}y^{p-1}.
\]
Multiplying the preceding recurrences by $y^{p-1}$ and summing over
$1\leq p\leq n$ gives
\[
\begin{dcases}
\displaystyle
(y-1)\mathcal P_n(y)+P_n(0)
=
y^2\bigl(yP_{n-1}'(y)+2P_{n-1}(y)\bigr),
\\[6pt]
\displaystyle
(y-1)\mathcal Q_n(y)+Q_n(0)
=
y^2\bigl(yQ_{n-1}'(y)+Q_{n-1}(y)\bigr).
\end{dcases}
\]
By \cref{prop:PQ-edge-coefficients},
\[
\begin{dcases}
\displaystyle
P_n(0)=(-1)^{n-1}\frac{2^{2n}(2^{2n}-1)}{2n}B_{2n},
\\[8pt]
\displaystyle
Q_n(0)=(-1)^{n-1}E_{2n-2}.
\end{dcases}
\]
Consequently,
\[
\begin{dcases}
\displaystyle
\mathcal P_n(y)
=
\frac{
(-1)^{n-1}\dfrac{2^{2n}(2^{2n}-1)}{2n}B_{2n}
-y^2\bigl(yP_{n-1}'(y)+2P_{n-1}(y)\bigr)}
{1-y},
\\[14pt]
\displaystyle
\mathcal Q_n(y)
=
\frac{
(-1)^{n-1}E_{2n-2}
-y^2\bigl(yQ_{n-1}'(y)+Q_{n-1}(y)\bigr)}
{1-y}.
\end{dcases}
\]
Finally, \(P_n(y)=\frac{d}{dy}(y\mathcal P_n(y))\) and
\(Q_n(y)=\frac{d}{dy}(y\mathcal Q_n(y))\), which gives \eqref{eq:PQ-polynomial-recurrences}.
\end{proof}

\subsection{Strict log-concavity and unimodality}

\begin{proposition}[Strict log-concavity of the coefficients]
Let \(n\in\mathbb N\), and write
\(P_n(y)=\sum_{p=1}^{n}a_{n,p}y^{p-1}\) and
\(Q_n(y)=\sum_{p=1}^{n}b_{n,p}y^{p-1}\), where
\(a_{n,p}=p!\,T(2n-p,p)\) and
\(b_{n,p}=p!\,S(2n-p-1,p-1)\). Then, for \(2\le p\le n-1\),
\[
\begin{cases}
a_{n,p}^{\,2}>a_{n,p-1}a_{n,p+1},\\
b_{n,p}^{\,2}>b_{n,p-1}b_{n,p+1}.
\end{cases}
\]
In particular, the coefficient sequences of \(P_n\) and \(Q_n\) are unimodal.
\end{proposition}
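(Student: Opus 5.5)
We propose to prove strict log-concavity by induction on $n$, driven by the first-order recurrences obtained in the proof of \cref{prop:PQ-polynomial-recurrences}:
\[
\frac{a_{n,p}}{p}-\frac{a_{n,p+1}}{p+1}=p\,a_{n-1,p-1},
\qquad
\frac{b_{n,p}}{p}-\frac{b_{n,p+1}}{p+1}=(p-1)\,b_{n-1,p-1},
\]
with $a_{n,n+1}=b_{n,n+1}=0$. Summing these telescopically from $p$ to $n$ gives the closed forms
\[
\alpha_{n,p}:=\frac{a_{n,p}}{p}=\sum_{q=p}^{n}q\,a_{n-1,q-1},
\qquad
\beta_{n,p}:=\frac{b_{n,p}}{p}=\sum_{q=p}^{n}(q-1)\,b_{n-1,q-1}.
\]
So each row is obtained from the previous row by a positive-weight tail-sum transform. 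It is convenient to prove log-concavity first for the normalized sequences $\alpha_{n,p}$ and $\beta_{n,p}$. The factor $p$ only helps: $p^2>(p-1)(p+1)$, so $\alpha_{n,p}^2\ge\alpha_{n,p-1}\alpha_{n,p+1}$ already yields the \emph{strict} inequality $a_{n,p}^2>a_{n,p-1}a_{n,p+1}$, and likewise for $b$.

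The key step is a lemma of the following type. If $(x_q)$ is a positive log-concave sequence and the weights $\omega_q$ are positive and log-concave in $q$ (here $\omega_q=q$ or $q-1$), then the products $\omega_q x_{q-1}$ form a log-concave sequence. The tail sums $\sum_{q\ge p}\omega_q x_{q-1}$ of a positive log-concave sequence with no internal zeros are again log-concave; this is the standard fact that partial sums, equivalently convolution with the all-ones sequence, preserve log-concavity. The induction proceeds as follows.
\begin{itemize}
\item The base cases $n\le3$ are checked from the explicit polynomials, e.g.\ $32^2>16\cdot36$ and $10^2>5\cdot12$.
\item For $P$, the weight is $q$, which is positive throughout.
\item For $Q$, the weight is $q-1$, which vanishes only at $q=1$. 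That index does not enter the range $p\ge2$ relevant to the inequality, except through $\beta_{n,1}=\beta_{n,2}$. The lemma is therefore applied on $q\ge2$, and the case $p=2$ is handled by a direct comparison using $[y]Q_n=2Q_n(0)$ from \cref{prop:PQ-edge-coefficients}.
\end{itemize}
Unimodality then follows at once, since a positive log-concave sequence without internal zeros is unimodal.

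The main obstacle is passing from log-concavity of $(a_{n-1,q})_q$ to that of $(\alpha_{n,p})_p$ while keeping the index shift $q-1$ straight at the endpoints. The delicate inequality is at $p=n-1$, where the tail sums have only two and one terms respectively. There it reduces to
\[
\bigl(n a_{n-1,n-1}+(n-1)a_{n-1,n-2}\bigr)^2\ \ge\ \alpha_{n,n-2}\cdot n\,a_{n-1,n-1}.
\]
We would verify this directly using the explicit edge values $a_{n-1,n-1}=((n-1)!)^2$ and $a_{n-1,n-2}=(n-1)!(n-2)!\,((n-1)^2-1)/3$ from \cref{prop:PQ-edge-coefficients}, together with a crude upper bound on $a_{n-1,n-3}$ coming from the same recurrence.

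If the tail-sum lemma proves insufficient because of these boundary effects, the fallback is the probabilistic route from the proof of \cref{prop:PQ-even-real-zero}. The partial-fraction expansion $\tan\sqrt u/\sqrt u=\sum_k w_k/(1-u/\lambda_k)$ with $w_k>0$ gives $\mathcal A(u)^p=\mathbb E\,e^{uS_p}$ for a sum $S_p$ of $p$ i.i.d.\ nonnegative variables. Hence $a_{n,p}=p!\,(2n-p)!\,\mathbb E[S_p^{\,n-p}]/(n-p)!$, and we would then try to establish log-concavity along the antidiagonal by a Hölder-type moment inequality.
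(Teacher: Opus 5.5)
Your proposal is correct and is essentially the paper's own argument. It telescopes the first-order recurrences into tail sums, uses that multiplying by log-concave weights and taking tail sums preserve log-concavity, inducts on $n$, and recovers strictness from $p^2>(p-1)(p+1)$; your sequence $q\,a_{n-1,q-1}$ is exactly the paper's $j(j-1)A_{n-1,j-1}$ in different notation. The case $p=n-1$ you single out is not delicate, because the tail-sum lemma already gives $X_{n-1}^2-X_{n-2}X_n=x_{n-1}^2-x_{n-2}x_n+x_{n-1}x_n>0$, so neither the edge-value verification nor the probabilistic fallback is needed.
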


\begin{proof}
Put \(A_{n,p}=a_{n,p}/p\) and \(B_{n,p}=b_{n,p}/p\). From
\(T(m+1,p)=pT(m,p-1)+pT(m,p+1)\) and
\(S(m+1,p)=pS(m,p-1)+(p+1)S(m,p+1)\), we obtain
\begin{equation}
\begin{cases}
A_{n,p}-A_{n,p+1}=p(p-1)A_{n-1,p-1},\\
B_{n,p}-B_{n,p+1}=(p-1)^2B_{n-1,p-1}.
\end{cases}
\label{eq:AB-recurrence}
\end{equation}
Since \(A_{n,n+1}=B_{n,n+1}=0\), summation of \eqref{eq:AB-recurrence} gives
\begin{equation}
\begin{cases}
A_{n,p}=\displaystyle\sum_{j=p}^{n}j(j-1)A_{n-1,j-1},\\[6pt]
B_{n,p}=\displaystyle\sum_{j=p}^{n}(j-1)^2B_{n-1,j-1}.
\end{cases}
\label{eq:AB-tail-sums}
\end{equation}

We use the elementary fact that tail sums preserve log-concavity. Let
\((x_j)\) be a positive log-concave sequence and \(X_p=\sum_{j=p}^{N}x_j\).
Since \(X_{p-1}=x_{p-1}+X_p\) and \(X_{p+1}=X_p-x_p\),
\[
X_p^2-X_{p-1}X_{p+1}
=X_p(x_p-x_{p-1})+x_{p-1}x_p.
\]
If \(x_{p-1}\le x_p\), this is nonnegative. If \(x_{p-1}>x_p\), put
\(r=x_p/x_{p-1}<1\). Log-concavity implies \(x_{p+k}\le x_pr^k\), and hence
\(X_p\le x_p/(1-r)=x_{p-1}x_p/(x_{p-1}-x_p)\), which again gives
\(X_p^2\ge X_{p-1}X_{p+1}\).

We now proceed by induction on \(n\). The assertion is immediate for \(n=1\).
Assume that \((A_{n-1,p})\) and \((B_{n-1,p})\) are log-concave. The sequences
\(j(j-1)\) and \((j-1)^2\), \(j\ge2\), are log-concave, and therefore so are
\[
\begin{cases}
\bigl(j(j-1)A_{n-1,j-1}\bigr)_{j=2}^{n},\\
\bigl((j-1)^2B_{n-1,j-1}\bigr)_{j=2}^{n}.
\end{cases}
\]
By \eqref{eq:AB-tail-sums} and the preceding tail-sum property,
\((A_{n,p})_{p=2}^{n}\) and \((B_{n,p})_{p=2}^{n}\) are log-concave.
Moreover, \eqref{eq:AB-recurrence} with \(p=1\) gives
\(A_{n,1}=A_{n,2}\) and \(B_{n,1}=B_{n,2}\), so both full sequences are log-concave.

Since \(a_{n,p}=pA_{n,p}\) and \(b_{n,p}=pB_{n,p}\), for \(2\le p\le n-1\),
\[
\begin{cases}
a_{n,p}^{\,2}
=p^2A_{n,p}^{\,2}
\ge p^2A_{n,p-1}A_{n,p+1}
>(p-1)(p+1)A_{n,p-1}A_{n,p+1}
=a_{n,p-1}a_{n,p+1},\\[4pt]
b_{n,p}^{\,2}
=p^2B_{n,p}^{\,2}
\ge p^2B_{n,p-1}B_{n,p+1}
>(p-1)(p+1)B_{n,p-1}B_{n,p+1}
=b_{n,p-1}b_{n,p+1}.
\end{cases}
\]
Thus both coefficient sequences are strictly log-concave.

By \cref{prop:PQ-edge-coefficients}, for \(n\ge2\),
\begin{equation}
\begin{cases}
\dfrac{a_{n,2}}{a_{n,1}}=2,\\[6pt]
\dfrac{b_{n,2}}{b_{n,1}}=2,
\end{cases}
\label{eq:first-ratios}
\end{equation}
whereas at the opposite endpoint
\begin{equation}
\begin{cases}
\dfrac{a_{n,n}}{a_{n,n-1}}
=\dfrac{3n}{n^2-1},\\[8pt]
\dfrac{b_{n,n}}{b_{n,n-1}}
=\dfrac{6}{2n-1}.
\end{cases}
\label{eq:last-ratios}
\end{equation}
For \(n\ge4\), both quantities in \eqref{eq:last-ratios} are \(<1\), whereas
those in \eqref{eq:first-ratios} are \(>1\). Strict log-concavity gives
\[
\begin{cases}
\dfrac{a_{n,p+1}}{a_{n,p}}
<
\dfrac{a_{n,p}}{a_{n,p-1}},\\[8pt]
\dfrac{b_{n,p+1}}{b_{n,p}}
<
\dfrac{b_{n,p}}{b_{n,p-1}},
\end{cases}
\qquad 2\le p\le n-1.
\]
Thus, in both sequences, the successive quotients can pass through \(1\) at
most once. The coefficients therefore increase up to their maximum and
decrease thereafter. The cases \(n=1,2,3\) are immediate.
\end{proof}

\subsection{Generating functions}

\begin{proposition}[Generating functions for $P_n$ and $Q_n$]
\label[proposition]{prop:PQ-generating-functions}
For $y\in\mathbb C$ and $|z|$ sufficiently small,
\begin{equation}
\begin{dcases}
\displaystyle
\sum_{n=1}^{\infty}P_n(y)\frac{z^{2n}}{(2n)!}
=
\int_0^z
\frac{\tan x}
{\bigl(1-y(z-x)\tan x\bigr)^2}\,dx,
\\[12pt]
\displaystyle
\sum_{n=1}^{\infty}Q_n(y)\frac{z^{2n-1}}{(2n-1)!}
=
\int_0^z
\frac{\sec x}
{\bigl(1-y(z-x)\tan x\bigr)^2}\,dx.
\end{dcases}
\label{eq:PQ-generating-functions}
\end{equation}
\end{proposition}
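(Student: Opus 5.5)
Both identities follow from the same convolution computation, so I treat the tangent case in detail and note the changes for the secant case. Start from the definition $P_n(y)=\sum_{p=1}^{n}p!\,T(2n-p,p)y^{p-1}$ and the exponential generating function $\tan^p x=\sum_m T(m,p)x^m/m!$. The right-hand side is analytic in $(y,z)$ for $|z|$ small, because $|(z-x)\tan x|$ is small on the segment $[0,z]$. I therefore expand the integrand as a geometric-type series:
\[
\frac{\tan x}{\bigl(1-y(z-x)\tan x\bigr)^2}
=\sum_{p\geq1}p\,y^{p-1}(z-x)^{p-1}\tan^{p}x .
\]
This uses $\sum_{p\geq1}p\,w^{p-1}=(1-w)^{-2}$ with $w=y(z-x)\tan x$. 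The series converges uniformly on the segment for $|z|$ small and any fixed $y$, so I may integrate term by term.

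The key step is the Beta-type convolution
\[
\int_0^z (z-x)^{p-1}\frac{x^m}{m!}\,dx=(p-1)!\,\frac{z^{m+p}}{(m+p)!}.
\]
Inserting the Taylor series of $\tan^p x$ therefore gives
\[
\int_0^z p\,(z-x)^{p-1}\tan^p x\,dx=p!\sum_{m\geq p}T(m,p)\frac{z^{m+p}}{(m+p)!}.
\]
Only terms with $m\equiv p\pmod 2$ survive, so $m+p=2n$ is even. Setting $m=2n-p$, with the condition $m\geq p$ becoming $p\leq n$, and collecting the coefficient of $z^{2n}/(2n)!$ yields exactly $\sum_{p=1}^{n}p!\,T(2n-p,p)y^{p-1}=P_n(y)$. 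Interchanging the sums over $p$ and $m$ is justified by absolute convergence for $|z|$ small, or alternatively the identity can be read as an equality of formal power series in $z$ whose coefficients are polynomials in $y$.

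For the secant case I expand
\[
\frac{\sec x}{\bigl(1-y(z-x)\tan x\bigr)^2}=\sum_{p\geq1}p\,y^{p-1}(z-x)^{p-1}\sec x\tan^{p-1}x,
\]
and use $\sec x\tan^{p-1}x=\sum_m S(m,p-1)x^m/m!$. The same convolution produces $p!\,S(m,p-1)z^{m+p}/(m+p)!$. Parity forces $m\equiv p-1\pmod 2$, so $m+p=2n-1$ is odd, and $m=2n-p-1\geq p-1$ is again equivalent to $p\leq n$. This gives the coefficient $Q_n(y)$ of $z^{2n-1}/(2n-1)!$.

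The only delicate point is analytic rather than combinatorial: making the convergence statement precise so that the interchange of summation and integration is legitimate. I would handle it by bounding $|\tan x|\leq C|x|$ on $|x|\leq r<\pi/2$, which gives $|y(z-x)\tan x|\leq 4C|y|\,|z|^2<1$ for $|z|<(4C|y|)^{-1/2}$. The convergence is then uniform, and this bound is locally uniform in $y$, as required.
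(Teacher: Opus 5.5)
Your proof is correct and is essentially the paper's argument run in the opposite direction. The paper starts from the coefficient side, writes $p!/(m+p)!$ as the Beta integral $\frac{p}{m!}\int_0^1 t^m(1-t)^{p-1}\,dt$, sums under the integral and substitutes $x=zt$; your convolution $\int_0^z (z-x)^{p-1}x^m/m!\,dx=(p-1)!\,z^{m+p}/(m+p)!$ is exactly that Beta integral after the same substitution. Your parity bookkeeping, the range $p\le n$, and your justification of the interchanges (which the paper leaves implicit) are all sound.
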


\begin{proof}
By definition,
\(P_n(y)=\sum_{p=1}^{n}p!\,T(2n-p,p)y^{p-1}\) and
\(Q_n(y)=\sum_{p=1}^{n}p!\,S(2n-p-1,p-1)y^{p-1}\).
Setting respectively $m=2n-p$ and $m=2n-p-1$, and using the vanishing of \(T(m,p)\) and \(S(m,p-1)\) outside the
admissible range and parity, we obtain
\[
\begin{dcases}
\displaystyle
\sum_{n=1}^{\infty}P_n(y)\frac{z^{2n}}{(2n)!}
=
\sum_{p=1}^{\infty}\sum_{m=p}^{\infty}
p!\,T(m,p)y^{p-1}\frac{z^{m+p}}{(m+p)!},
\\[12pt]
\displaystyle
\sum_{n=1}^{\infty}Q_n(y)\frac{z^{2n-1}}{(2n-1)!}
=
\sum_{p=1}^{\infty}\sum_{m=p-1}^{\infty}
p!\,S(m,p-1)y^{p-1}\frac{z^{m+p}}{(m+p)!}.
\end{dcases}
\]

Now \(p!/(m+p)!=(p/m!)\int_0^1t^m(1-t)^{p-1}\,dt\). Therefore,
\[
\begin{dcases}
\displaystyle
\sum_{n=1}^{\infty}P_n(y)\frac{z^{2n}}{(2n)!}
=
z\int_0^1
\sum_{p=1}^{\infty}
p\bigl(zy(1-t)\bigr)^{p-1}
\sum_{m=p}^{\infty}
T(m,p)\frac{(zt)^m}{m!}\,dt,
\\[14pt]
\displaystyle
\sum_{n=1}^{\infty}Q_n(y)\frac{z^{2n-1}}{(2n-1)!}
=
z\int_0^1
\sum_{p=1}^{\infty}
p\bigl(zy(1-t)\bigr)^{p-1}
\sum_{m=p-1}^{\infty}
S(m,p-1)\frac{(zt)^m}{m!}\,dt.
\end{dcases}
\]

Using
\[
\begin{dcases}
\displaystyle
\tan^p x
=
\sum_{m=p}^{\infty}
T(m,p)\frac{x^m}{m!},
\\[10pt]
\displaystyle
\sec x\,\tan^{p-1}x
=
\sum_{m=p-1}^{\infty}
S(m,p-1)\frac{x^m}{m!},
\end{dcases}
\]
we get
\[
\begin{dcases}
\displaystyle
\sum_{n=1}^{\infty}P_n(y)\frac{z^{2n}}{(2n)!}
=
z\int_0^1
\tan(zt)
\sum_{p=1}^{\infty}
p\bigl(zy(1-t)\tan(zt)\bigr)^{p-1}\,dt,
\\[14pt]
\displaystyle
\sum_{n=1}^{\infty}Q_n(y)\frac{z^{2n-1}}{(2n-1)!}
=
z\int_0^1
\sec(zt)
\sum_{p=1}^{\infty}
p\bigl(zy(1-t)\tan(zt)\bigr)^{p-1}\,dt.
\end{dcases}
\]

Since \(\sum_{p=1}^{\infty}p r^{p-1}=(1-r)^{-2}\) for $|r|<1$,
it follows that
\[
\begin{dcases}
\displaystyle
\sum_{n=1}^{\infty}P_n(y)\frac{z^{2n}}{(2n)!}
=
z\int_0^1
\frac{\tan(zt)}
{\bigl(1-zy(1-t)\tan(zt)\bigr)^2}\,dt,
\\[14pt]
\displaystyle
\sum_{n=1}^{\infty}Q_n(y)\frac{z^{2n-1}}{(2n-1)!}
=
z\int_0^1
\frac{\sec(zt)}
{\bigl(1-zy(1-t)\tan(zt)\bigr)^2}\,dt.
\end{dcases}
\]

Finally, the substitution \(x=zt\) yields
\[
\begin{dcases}
\displaystyle
\sum_{n=1}^{\infty}P_n(y)\frac{z^{2n}}{(2n)!}
=
\int_0^z
\frac{\tan x}
{\bigl(1-y(z-x)\tan x\bigr)^2}\,dx,
\\[14pt]
\displaystyle
\sum_{n=1}^{\infty}Q_n(y)\frac{z^{2n-1}}{(2n-1)!}
=
\int_0^z
\frac{\sec x}
{\bigl(1-y(z-x)\tan x\bigr)^2}\,dx,
\end{dcases}
\]
which proves \eqref{eq:PQ-generating-functions}.
\end{proof}

\section{Complex zeros and the common zero attractor}
\label[section]{sec:complex-zero-attractor}

The generating functions in \cref{prop:PQ-generating-functions} reduce the
complex-zero problem for both polynomial families to the same singularity
geometry.  The purpose of this section is to make that geometry global and
to prove that it is exactly the limiting geometry of the zeros.  The central
result is that the zero sets of both $P_n$ and $Q_n$ converge, in the
Hausdorff sense, to one and the same compact Jordan curve.  We also determine
the limiting zero distribution on this curve.

We use throughout the set conventions stated in the Notation paragraph.  In
particular, $A^\circ$, $\overline A$, and $\partial A$ denote interior,
closure, and boundary; $L\Subset U$ denotes compact containment;
$\operatorname{Comp}_0$ selects the connected component containing the
origin; and $\mathbb D_r$ is the open disc of radius $r$ centered at $0$.
The Riemann sphere $\widehat{\mathbb C}$ is used when the exterior domain is
viewed as a domain containing $\infty$.  The geometric roles of the sets
introduced below are: $\mathcal S$ is the central strip in the critical
$x$-plane, $\Omega$ is the principal critical component, $E$ is the exterior
domain in the $y$-plane, $K=\widehat{\mathbb C}\setminus E$ is its compact
complement, and $\Gamma=\partial E=\partial K$ is the common boundary and
zero attractor.

Throughout this section we put
\begin{equation}
\rho:=\frac{\pi}{2},
\qquad
R:=\rho^2=\frac{\pi^2}{4},
\qquad
\mathfrak c:=\frac{4}{R}=\frac{16}{\pi^2}.
\label{eq:complex-rho-R-c}
\end{equation}
For $y,z,x\in\mathbb C$ we use the common denominator
\begin{equation}
D_y(x,z):=1-y(z-x)\tan x.
\label{eq:complex-D-def}
\end{equation}

\subsection{The principal critical map and the curve $\Gamma$}

The algebraic critical-point equations attached to
\eqref{eq:complex-D-def} admit a particularly simple global
parametrization.

\begin{lemma}[Critical-point parametrization]
\label[lemma]{lem:complex-critical-parametrization}
Let $y\neq0$.  A pair $(x,z)$ with $\tan x$ finite satisfies
\[
D_y(x,z)=0,
\qquad
\partial_xD_y(x,z)=0,
\]
if and only if
\begin{equation}
y=\csc^2x,
\qquad
z=f(x),
\qquad
f(x):=x+\sin x\cos x
=x+\frac12\sin(2x).
\label{eq:complex-f-def}
\end{equation}
At such a point,
\begin{equation}
\partial_x^2D_y(x,z)=2y,
\qquad
\partial_zD_y(x,z)=-y\tan x\neq0.
\label{eq:complex-critical-nondegenerate}
\end{equation}
Moreover,
\begin{equation}
f'(x)=2\cos^2x.
\label{eq:complex-f-derivative}
\end{equation}
\end{lemma}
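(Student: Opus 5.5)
This is a direct computation, and I would organize it around the two defining equations. Write $D_y(x,z)=1-y(z-x)\tan x$. Differentiating in $x$ gives
\[
\partial_xD_y(x,z)=y\tan x-y(z-x)\sec^2x .
\]
Since $y\neq0$, the condition $\partial_xD_y=0$ is equivalent to $(z-x)\sec^2x=\tan x$, that is, $z-x=\sin x\cos x$. This is the relation $z=f(x)$.

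Next I substitute this into $D_y=0$. We have $(z-x)\tan x=\sin^2x$, so $D_y=1-y\sin^2x$. The equation $D_y=0$ therefore forces $\sin x\neq0$ and $y=\csc^2x$. Conversely, if $y=\csc^2x$ and $z=f(x)$, the same two computations run backwards and give $D_y=\partial_xD_y=0$. The hypothesis that $\tan x$ is finite ensures $\cos x\neq0$, so multiplying and dividing by $\sec^2x$ is legitimate.

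For the nondegeneracy statements I differentiate once more:
\[
\partial_x^2D_y=2y\sec^2x-2y(z-x)\sec^2x\tan x .
\]
At the critical point $(z-x)\tan x=\sin^2x$, so this equals $2y\sec^2x\,(1-\sin^2x)=2y$.

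Also $\partial_zD_y=-y\tan x$. This is nonzero because $y\neq0$ and $\tan x\neq0$; the latter holds since $\sin x\neq0$ was forced above.

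Finally,
\[
f'(x)=1+\cos^2x-\sin^2x=2\cos^2x .
\]

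The only point needing care is the bookkeeping of exceptional values. One must check that the constraint $\sin x\neq0$ comes out of $D_y=0$, not out of an extra assumption, and it does. Beyond that there is no real obstacle.
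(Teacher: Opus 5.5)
Your proof is correct and is essentially the paper's direct computation, with the same second-derivative formula and the same evaluation of $f'$. The only difference is the order of elimination: the paper solves $D_y=0$ for $z-x=1/(y\tan x)$ and substitutes into $\partial_xD_y=0$, whereas you solve $\partial_xD_y=0$ (linear in $z$) first to get $z=f(x)$ and reduce $D_y=0$ to $y\sin^2x=1$, so $\sin x\neq0$, $\tan x\neq0$, and the converse direction come out explicitly rather than implicitly.
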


\begin{proof}
From $D_y(x,z)=0$,
\[
z-x=\frac{1}{y\tan x}.
\]
Substitution into
\[
\partial_xD_y(x,z)
=y\tan x-y(z-x)\sec^2x
\]
gives
\[
y\tan^2x=\sec^2x,
\qquad
\sin^2x=\frac1y.
\]
Thus $y=\csc^2x$, and then
\[
z-x=\frac{1}{y\tan x}=\sin x\cos x,
\]
which is \eqref{eq:complex-f-def}.  Differentiating once more gives
\[
\partial_x^2D_y
=2y\sec^2x-2y(z-x)\sec^2x\tan x.
\]
Since $(z-x)\tan x=1/y$ at a critical point,
\eqref{eq:complex-critical-nondegenerate} follows.  Finally,
\[
f'(x)=1+\cos(2x)=2\cos^2x.
\]
\end{proof}

The function $f$ contains the global information that is obscured if one
works directly with the multivalued expression involving $\arcsin$.  We now
single out the component that is analytically connected to the Taylor germ
of the generating functions at $z=0$.

\begin{proposition}[Principal critical domain]
\label[proposition]{prop:complex-principal-domain}
Put
\begin{equation}
\mathcal S:=\{x\in\mathbb C:|\Re x|<\rho\},
\qquad
\Omega:=\operatorname{Comp}_0\{x\in\mathcal S:|f(x)|<\rho\},
\label{eq:complex-Omega-def}
\end{equation}
where $\operatorname{Comp}_0$ denotes the connected component containing
$0$.  Then $\Omega$ is bounded, $f'$ has no zero in $\Omega$, and
\begin{equation}
f:\Omega\longrightarrow\mathbb D_\rho
:=\{w\in\mathbb C:\ |w|<\rho\}
\label{eq:complex-f-biholo}
\end{equation}
is biholomorphic.  In addition, $\Omega$ is the only connected component of
$\{|f|<\rho\}$ contained in $\mathcal S$.
\end{proposition}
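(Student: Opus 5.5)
The plan is to show that every connected component of $U:=\{x\in\mathcal S:|f(x)|<\rho\}$ is bounded and is mapped by $f$ properly, without branching, onto $\mathbb D_\rho$. A degree argument then makes each such component biholomorphic to the disc. Finally, a count of the zeros of $f$ in $\mathcal S$ shows that there is only one component.

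\emph{Step 1 (no critical points, boundedness).} By \eqref{eq:complex-f-derivative}, $f'(x)=2\cos^2x$ vanishes only at $x\in\rho+\pi\mathbb Z$, and none of these points lies in the open strip $\mathcal S$. So $f'\neq0$ on $\mathcal S\supseteq\Omega$. For boundedness, write $x=s+it$ and use $|\sin 2x|\geq\sinh(2|t|)$. This gives
\[
|f(x)|\geq\tfrac12\sinh(2|t|)-\rho-|t|
\qquad (x\in\mathcal S).
\]
The right-hand side exceeds $\rho$ once $|t|$ is large enough. Hence $U$ lies in a bounded rectangle, and so does every component of $U$, in particular $\Omega$.

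\emph{Step 2 (boundary values and properness).} On the edges of the strip, since $\sin(\pm\pi+2it)=-i\sinh 2t$, we have
\[
f(\pm\rho+it)=\pm\rho+i\bigl(t-\tfrac12\sinh 2t\bigr),
\]
so $|f|\geq\rho$ on $\partial\mathcal S$. Let $V$ be any component of $U$. Then $\overline V$ is compact, and $f$ extends continuously to $\overline V$. Take a point of $\partial V$. If it lies inside $\mathcal S$, then $|f|=\rho$ there, because $V$ is a component of an open sublevel set. If it lies on $\partial\mathcal S$, then $|f|\leq\rho$ by continuity and $|f|\geq\rho$ by the edge computation, so again $|f|=\rho$. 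Therefore $f|_V:V\to\mathbb D_\rho$ is proper: whenever $x_k\to\partial V$, we get $|f(x_k)|\to\rho$. A proper holomorphic map between planar domains is a finite branched covering. It is surjective because the image is open and closed in $\mathbb D_\rho$. By Step 1 it has no branch points, so it is an unbranched finite covering of the simply connected disc $\mathbb D_\rho$. Such a covering has degree one, so $f:V\to\mathbb D_\rho$ is biholomorphic. Applied to $V=\Omega$, this gives \eqref{eq:complex-f-biholo}.

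\emph{Step 3 (uniqueness of the component).} By Step 2, each component of $U$ contains exactly one zero of $f$. Distinct components are disjoint, so it suffices to show that $0$ is the only zero of $f$ in $\mathcal S$. Put $w=2x=a+ib$ with $|a|<\pi$. Then $f(x)=0$ is equivalent to $w+\sin w=0$, that is,
\[
a+\sin a\cosh b=0,
\qquad
b+\cos a\sinh b=0.
\]
If $b\neq0$, the second equation gives $\cos a=-b/\sinh b\in(-1,0)$, so $a\neq0$. But $a$ and $\sin a$ have the same sign for $0<|a|<\pi$, so the first equation cannot hold. If $b=0$, the first equation reduces to $a+\sin a=0$, whose only solution in $(-\pi,\pi)$ is $a=0$. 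Hence $0$ is the only zero, and $\Omega$ is the only component of $\{|f|<\rho\}$ contained in $\mathcal S$.

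The step needing the most care is the properness in Step 2 at points of $\partial\Omega$ lying on the lines $\Re x=\pm\rho$. These lines contain the critical points $\pm\rho$ of $f$, and they touch the circle $|w|=\rho$ tangentially at $t=0$. The edge estimate $|f|\geq\rho$ is exactly what prevents $\Omega$ from leaking across $\partial\mathcal S$. It also guarantees that the covering argument only uses points of the open strip, where $f'\neq0$.
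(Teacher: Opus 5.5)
Your proof is correct and follows the paper's route. Both arguments use the edge computation showing $|f|\ge\rho$ on $\partial\mathcal S$, the $\sinh$ growth bound for boundedness, and properness from $|f|=\rho$ on the boundary of each component. Both then obtain an unbranched finite covering of $\mathbb D_\rho$ and rule out other components because $x=0$ is the only zero of $f$ in $\mathcal S$. The one small variation is that you get degree one from the simple connectivity of $\mathbb D_\rho$ (a connected covering of a disc is trivial), whereas the paper reads it off from the single preimage of $0$; either works.
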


\begin{proof}
Write $x=a+ib$.  On the two boundary lines of the strip,
\begin{equation}
\begin{aligned}
f(\rho+ib)
&=\rho+i\bigl(b-\sinh b\cosh b\bigr),\\
f(-\rho+ib)
&=-\rho+i\bigl(b-\sinh b\cosh b\bigr),
\end{aligned}
\label{eq:complex-strip-boundary-f}
\end{equation}
so $|f(x)|\geq\rho$ when $|\Re x|=\rho$.  Equality can occur there only
at $x=\pm\rho$: indeed, for
$h(b):=b-\sinh b\cosh b$,
\[
h(0)=0,
\qquad
h'(b)=1-\cosh(2b)=-2\sinh^2b\leq0,
\]
and $h(b)=0$ only for $b=0$.

We next show that $0$ is the only zero of $f$ in $\mathcal S$.  Since
\begin{equation}
\begin{aligned}
\Re f(a+ib)
&=a+\frac12\sin(2a)\cosh(2b),\\
\Im f(a+ib)
&=b+\frac12\cos(2a)\sinh(2b),
\end{aligned}
\label{eq:complex-f-real-imag}
\end{equation}
$\Re f(a+ib)$ has the same sign as $a$ for $0<|a|<\rho$.  Hence
$f(a+ib)=0$ forces $a=0$, and then
\[
f(ib)=i\left(b+\frac12\sinh(2b)\right)=0
\]
forces $b=0$.

Moreover,
\[
|\sin x\cos x|^2
=\frac14\bigl(\sin^2(2a)+\sinh^2(2b)\bigr),
\]
so $|f(a+ib)|\to\infty$ uniformly for $|a|\leq\rho$ as
$|b|\to\infty$.  Consequently every component of
$\{x\in\mathcal S:|f(x)|<\rho\}$ is bounded.  For the component $\Omega$,
continuity and \eqref{eq:complex-strip-boundary-f} give the useful boundary
facts
\begin{equation}
\partial\Omega\cap\mathcal S\subset\{|f|=\rho\},
\qquad
\partial\Omega\cap\partial\mathcal S=\{-\rho,\rho\}.
\label{eq:complex-Omega-boundary-facts}
\end{equation}
By \eqref{eq:complex-f-derivative}, the only zeros of $f'$ in the closed
central strip are $\pm\rho$; hence $f'$ has no zero in $\Omega$.

We now justify properness explicitly.  If $C\Subset\mathbb D_\rho$ and
$x_j\in f^{-1}(C)\cap\Omega$ had no convergent subsequence in $\Omega$,
boundedness of $\Omega$ would give a subsequence converging to
$x_0\in\partial\Omega$.  But \eqref{eq:complex-Omega-boundary-facts} would
then imply $|f(x_0)|=\rho$, contradicting $f(x_j)\in C$.  Thus
\[
f:\Omega\longrightarrow\mathbb D_\rho
\]
is a proper local biholomorphism.  Its image is nonempty, open and closed in
$\mathbb D_\rho$, hence is the whole disc; equivalently, $f$ is a finite
unbranched covering of $\mathbb D_\rho$.  The covering degree is the number
of preimages of any point.  Since $0$ has exactly one preimage in
$\mathcal S$, namely $x=0$, that degree is one.  Therefore
\eqref{eq:complex-f-biholo} is biholomorphic.

Finally, the same argument on any other component of
$\{x\in\mathcal S:|f(x)|<\rho\}$ would again give a proper covering of
$\mathbb D_\rho$ and therefore a preimage of $0$ in that component.  This is
impossible because $0$ is the unique zero of $f$ in $\mathcal S$.  Hence
$\Omega$ is the unique component.
\end{proof}

Let
\begin{equation}
\chi:\mathbb D_\rho\longrightarrow\Omega
\label{eq:complex-chi-def}
\end{equation}
be the inverse of $f$.  Since $f$ is odd, $\chi$ is odd.  The meromorphic
function
\[
w\longmapsto\csc^2\chi(w)
\]
is therefore even.  Hence there is a unique meromorphic function $Y$ on
$\mathbb D_R$ such that
\begin{equation}
Y(w^2)=\csc^2\chi(w),
\qquad |w|<\rho.
\label{eq:complex-Y-def}
\end{equation}
The point $s=0$ is a simple pole of $Y$, because
\[
f(x)=2x+O(x^3),
\qquad
\chi(w)=\frac{w}{2}+O(w^3),
\qquad
Y(s)=\frac4s+O(1).
\]

\begin{proposition}[The exterior map and the zero-attractor curve]
\label[proposition]{prop:complex-exterior-map}
The function
\[
Y:\mathbb D_R\longrightarrow\widehat{\mathbb C}
\]
is univalent as a meromorphic map.  Its image
\begin{equation}
E:=Y(\mathbb D_R)
\label{eq:complex-E-def}
\end{equation}
is a simply connected domain containing $\infty$.  Put
\begin{equation}
K:=\widehat{\mathbb C}\setminus E,
\qquad
\Gamma:=\partial K=\partial E.
\label{eq:complex-K-Gamma-def}
\end{equation}
Then $K\subset\mathbb C$ is compact and $\Gamma$ is a Jordan curve,
symmetric with respect to the real axis.  The inverse map
\begin{equation}
\Psi:=Y^{-1}:E\longrightarrow\mathbb D_R
\label{eq:complex-Psi-def}
\end{equation}
is conformal and satisfies, on the branch obtained by analytic continuation
from $y>1$,
\begin{equation}
\Psi(y)=\Phi(y)^2,
\qquad
\Phi(y):=
\arcsin\frac1{\sqrt y}
+\frac1{\sqrt y}\sqrt{1-\frac1y}.
\label{eq:complex-Psi-Phi}
\end{equation}
Consequently,
\begin{equation}
\Gamma
=
\left\{y:\ |\Psi(y)|=R\right\}
=
\left\{y:\ |\Phi(y)|=\rho\right\},
\label{eq:complex-Gamma-equimodular}
\end{equation}
where the second description is interpreted on the principal analytic
continuation determined by $E$.
\end{proposition}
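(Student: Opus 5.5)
The plan is to reduce everything to properties of $\chi=f^{-1}$ from \cref{prop:complex-principal-domain}, together with the elementary periodicity of $\sin^2$ on the strip $\mathcal S$.

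\emph{Univalence.} Let $s_1=w_1^2$ and $s_2=w_2^2$ with $|w_i|<\rho$, and suppose $Y(s_1)=Y(s_2)$. Put $x_i=\chi(w_i)\in\Omega\subset\mathcal S$. Then $\sin^2x_1=\sin^2x_2$, so $x_1=\pm x_2+k\pi$ for some $k\in\mathbb Z$.
\begin{itemize}
\item Since $|\Re x_i|<\rho=\pi/2$, we have $|\Re(x_1\mp x_2)|<\pi$, which forces $k=0$.
\item Hence $x_1=\pm x_2$. Because $f$ is odd and $\chi$ is injective and odd, this gives $w_1=\pm w_2$, i.e.\ $s_1=s_2$.
\end{itemize}
Poles and zeros: $\csc^2x$ never vanishes, and its only pole in $\mathcal S$ is $x=0$. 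So $Y$ has a single simple pole at $s=0$ and omits the value $0$. Therefore $E=Y(\mathbb D_R)$ is a domain containing $\infty$, and it is simply connected as a conformal image of a disc. Its complement $K$ is closed in $\widehat{\mathbb C}$, avoids $\infty$, and contains $0$, so $K\subset\mathbb C$ is compact and nonempty. The map $\Psi=Y^{-1}$ is then conformal from $E$ onto $\mathbb D_R$.

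\emph{Jordan property and symmetry.} Real symmetry is immediate. Since $f$ has real Taylor coefficients, $\chi(\overline w)=\overline{\chi(w)}$, hence $Y(\overline s)=\overline{Y(s)}$ and $\Gamma$ is invariant under conjugation.

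For the Jordan property I will show that $Y$ extends to a continuous injection of $\overline{\mathbb D_R}\setminus\{0\}$ into $\mathbb C$. Then $\Gamma=Y(\{|s|=R\})$ is a homeomorphic image of a circle. The steps are:
\begin{itemize}
\item $f$ is entire, and by \eqref{eq:complex-Omega-boundary-facts} it maps $\partial\Omega$ into $\{|w|=\rho\}$.
\item Away from $\pm\rho$ we have $f'\neq0$, so $\chi$ extends analytically across those arcs of the circle.
\item At $x=\pm\rho$, $f$ has a critical point of order two, since $f'(x)=2\cos^2x$ vanishes to order two. Locally $f(x)\mp\rho\sim c(x\mp\rho)^3$, and $\Omega$ occupies one of the three sectors near $\pm\rho$. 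Hence $\chi$ extends continuously there, as a branch of a cube root.
\end{itemize}
Injectivity on the closed disc follows from the same argument as above with $|\Re x_i|\le\rho$. The only new cases are $|\Re(x_1\mp x_2)|=\pi$. Because $\partial\Omega\cap\partial\mathcal S=\{\pm\rho\}$, these force $\{x_1,x_2\}\subset\{\rho,-\rho\}$, hence $w_i\in\{\pm\rho\}$ and again $s_1=s_2=R$. Continuity of the extension then identifies $\partial E$ with $Y(\{|s|=R\})$, which gives $\Gamma=\{|\Psi|=R\}$ in the sense of boundary values.

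\emph{Formula for $\Psi$.} For $0<s<R$, $w=\sqrt s\in(0,\rho)$, and $\chi(w)=x\in(0,\rho)$ is real. Then $y=\csc^2x>1$, with
\[
\sin x=\frac1{\sqrt y},\qquad \cos x=\sqrt{1-\frac1y},\qquad x=\arcsin\frac1{\sqrt y},
\]
all on principal real branches. Therefore $w=f(x)=\Phi(y)$ and $\Psi(y)=s=\Phi(y)^2$ on $y>1$. By the identity theorem, $\Phi^2$ continues analytically along $E$ and equals $\Psi$ there. This yields \eqref{eq:complex-Psi-Phi} and the second description in \eqref{eq:complex-Gamma-equimodular}.

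\emph{Main obstacle.} I expect the hardest step to be the boundary behaviour at the degenerate points $\pm\rho$. One must check that $\partial\Omega$ is a Jordan curve through $\pm\rho$ and that $\chi$ extends continuously and injectively there. I would handle this through the local cubic normal form of $f$ at $\pm\rho$, combined with the properness argument of \cref{prop:complex-principal-domain}.
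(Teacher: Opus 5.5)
Your proposal is correct and follows essentially the same route as the paper. The shared steps are: univalence from $\sin^2x_1=\sin^2x_2\Rightarrow x_1=\pm x_2+k\pi$ with $k=0$ forced by the strip; continuous extension of $\chi$ to the closed disc (analytic across arcs where $f'\neq0$, a cube-root branch at $\pm\rho$); boundary injectivity via $\partial\Omega\cap\partial\mathcal S=\{\pm\rho\}$; and $\Psi=\Phi^2$ on $y>1$ followed by analytic continuation. Your single closed-strip argument with $|\Re x_i|\le\rho$ also rules out boundary--interior coincidences in one stroke, a case the paper treats separately.
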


\begin{proof}
Suppose first that $s_1,s_2\in\mathbb D_R$ and $Y(s_1)=Y(s_2)$.
Choose $w_j^2=s_j$ and put $x_j=\chi(w_j)\in\Omega$.  Then
\[
\csc^2x_1=\csc^2x_2,
\qquad\text{hence}\qquad
\sin^2x_1=\sin^2x_2.
\]
The complete set of solutions of the latter equation is
$x_1=\pm x_2+k\pi$, $k\in\mathbb Z$.  Since both points lie in the open
central strip, necessarily $k=0$.  Therefore
\[
x_1=\pm x_2,
\qquad
w_1=f(x_1)=\pm f(x_2)=\pm w_2,
\qquad
s_1=s_2.
\]
Thus $Y$ is univalent as a meromorphic map.  Its pole at $s=0$ shows that
$E=Y(\mathbb D_R)$ contains $\infty$; hence
$K=\widehat{\mathbb C}\setminus E$ is compact in $\mathbb C$.

We next establish the boundary extension.  Let $|w_0|=\rho$ with
$w_0\neq\pm\rho$, and let $w_j\to w_0$ from $\mathbb D_\rho$.  The points
$x_j:=\chi(w_j)$ remain in the bounded domain $\Omega$, so every subsequence
has a further subsequence converging to some $x_0\in\partial\Omega$.  By
\eqref{eq:complex-Omega-boundary-facts}, $x_0\in\mathcal S$ and
$f(x_0)=w_0$.  Since $f'(x_0)\neq0$, the level set $|f|=\rho$ is locally a
real-analytic arc through $x_0$, and $f$ maps the two sides conformally to
the two sides of $|w|=\rho$.  The side belonging to $\Omega$ is mapped to
$\mathbb D_\rho$.  Consequently the local inverse of $f$ near $w_0$ agrees
with $\chi$ on the disc side and gives an analytic continuation of $\chi$
through $w_0$.  In particular, the boundary limit is unique.

At $w_0=\rho$, the real inverse branch satisfies
$\chi(w)\uparrow\rho$ as $w\uparrow\rho$.  Writing $x=\rho+u$ gives
\begin{equation}
\begin{aligned}
f(\rho+u)
&=\rho+\frac23u^3-\frac2{15}u^5+O(u^7),\\
\csc^2(\rho+u)
&=1+u^2+\frac23u^4+O(u^6).
\end{aligned}
\label{eq:complex-boundary-Puiseux}
\end{equation}
The first relation has three local inverse branches.  For $w$ on the disc
side and sufficiently close to $\rho$, exactly one of them satisfies
$\Re x<\rho$; it is the continuation of the real branch and hence the
branch belonging to $\Omega$.  Thus $\chi$ has a unique cubic-root boundary
continuation at $w=\rho$.  The case $w=-\rho$ follows by oddness.  Because
the two points $w=\pm\rho$ correspond to the same value $s=R$, the even
composition in \eqref{eq:complex-Y-def} has a $2/3$-power continuation at
$s=R$ and extends continuously to $|s|=R$.

We now check injectivity on the boundary, including the only subtle case
created by the closed strip.  If two boundary points have the same
$Y$-value, their associated points $x_1,x_2\in\partial\Omega$ satisfy
$x_1=\pm x_2+k\pi$.  In the closed central strip, $k\neq0$ is possible only
when the points lie on opposite boundary lines.  By
\eqref{eq:complex-Omega-boundary-facts} those points are necessarily
$\pm\rho$, and both give the single boundary point $s=R$.  Hence the
boundary map $Y|_{|s|=R}$ is one-to-one.  The same argument shows that a
boundary value cannot equal an interior value: otherwise a point of
$\partial\Omega$ would be congruent up to sign and an integral multiple of
$\pi$ to a point of $\Omega$; the exceptional value $\csc^2x=1$ would force
an interior point to be $\pm\rho$, which is impossible.

It follows that $Y$ extends to a continuous injection
\[
Y:\overline{\mathbb D_R}\longrightarrow\widehat{\mathbb C}
\]
when the pole at $0$ is interpreted as the value $\infty$.  Therefore
$Y(\partial\mathbb D_R)=\partial E$ is a Jordan curve.  Complex conjugation
commutes with $f$, $\chi$ and $Y$, so the curve is symmetric with respect to
the real axis.

Finally, for $y>1$ put
\[
x=\arcsin(y^{-1/2})\in(0,\rho).
\]
Then $y=\csc^2x$ and
\[
f(x)=x+\sin x\cos x
=\arcsin\frac1{\sqrt y}
 +\frac1{\sqrt y}\sqrt{1-\frac1y}
=\Phi(y).
\]
By \eqref{eq:complex-Y-def},
$\Psi(y)=f(x)^2=\Phi(y)^2$ for $y>1$.  Analytic continuation on $E$ gives
\eqref{eq:complex-Psi-Phi}; the boundary identity
\eqref{eq:complex-Gamma-equimodular} follows from the continuous extension
just proved.
\end{proof}

Away from $x=\pm\rho$, the boundary parametrization is regular.  Indeed,
$f'(x)\neq0$ there and
\[
\frac{d}{dx}\csc^2x=-2\csc^2x\cot x,
\]
whose only zeros in the closed central strip occur at $x=\pm\rho$.  Thus
$y=1$ is the only nonregular point of the boundary parametrization.  If
$y=1+\delta$ and the branches are continued from $\delta>0$, write
$\delta=\tan^2u$ with $u=\arctan\sqrt\delta$.  Then
\begin{equation}
\begin{aligned}
\Phi(1+\delta)
&=\rho-u+\sin u\cos u
 =\rho-\frac23\delta^{3/2}+\frac45\delta^{5/2}
 -\frac67\delta^{7/2}+O(\delta^{9/2}),\\
\Psi(1+\delta)
&=R-\frac{2\pi}{3}\delta^{3/2}+O(\delta^{5/2}).
\end{aligned}
\label{eq:complex-Phi-near-one}
\end{equation}
Hence $\Gamma$ is real analytic away from $1$.  Since
$\Psi(y)-R$ is proportional to $-(y-1)^{3/2}$ to leading order, the two
boundary tangents at $1$ satisfy
\[
\arg(y-1)=\pm\frac\pi3.
\]
Thus $1$ is an algebraic corner of $\Gamma$.

\begin{proposition}[Real intersections of $\Gamma$]
\label[proposition]{prop:complex-real-intersections}
The curve $\Gamma$ meets the real axis exactly at
\begin{equation}
\Gamma\cap\mathbb R=\{1,-c_*\},
\label{eq:complex-real-intersections}
\end{equation}
where $c_*$ is the constant in \cref{prop:PQ-real-zero-limit}.
\end{proposition}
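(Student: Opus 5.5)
We need to show $\Gamma\cap\mathbb R=\{1,-c_*\}$. Since $\Gamma=Y(\partial\mathbb D_R)$ is the image of the boundary circle under the continuous injective extension of $Y$ (\cref{prop:complex-exterior-map}), it suffices to find the points $s=Re^{i\theta}$ with $Y(s)\in\mathbb R$. By the definitions, $Y(w^2)=\csc^2\chi(w)$ with $w=\rho e^{i\theta/2}$, so the task reduces to locating the points $x\in\partial\Omega$ at which $\csc^2x$ is real.

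Conjugation symmetry produces two obvious real boundary values. The point $s=R$ corresponds to $w=\pm\rho$ and $x=\pm\rho$, giving $Y=1$. The point $s=-R$ corresponds to $w=\pm i\rho$. Since $f(ib)=i\bigl(b+\tfrac12\sinh 2b\bigr)$ maps the imaginary axis increasingly onto itself, we get $\chi(i\rho)=ix_*$ with $x_*+\sinh x_*\cosh x_*=\rho$. This is exactly \eqref{eq:xstar-equation}. Then $Y(-R)=\csc^2(ix_*)=-1/\sinh^2x_*=-c_*$. Both values therefore lie on $\Gamma$, which gives the inclusion $\supseteq$.

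For the converse we must exclude other real values. The plan is a degree count that avoids describing $\partial\Omega$ explicitly.

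First, $Y$ is univalent on $\mathbb D_R$ and commutes with conjugation. Hence $Y(s)$ is real if and only if $Y(\bar s)=\overline{Y(s)}=Y(s)$. By injectivity of the extension to $\overline{\mathbb D_R}$, this forces $s=\bar s$, that is, $s\in\mathbb R$.

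Second, on the boundary circle the only real points are $s=\pm R$. Their images are $1$ and $-c_*$, which gives $\subseteq$.

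This uses only the continuous injectivity of $Y$ on the closed disc, established in \cref{prop:complex-exterior-map}, together with the real-symmetry $Y(\bar s)=\overline{Y(s)}$. The symmetry follows from $f(\bar x)=\overline{f(x)}$: since $\Omega$ is conjugation-invariant, being the component of $0$ in a conjugation-invariant set, uniqueness of the inverse gives $\chi(\bar w)=\overline{\chi(w)}$. The branch $Y(s)$ for $s=w^2$ is compatible with this because $Y$ is even in $w$.

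The main point needing care is the identification $\chi(i\rho)=ix_*$. One needs $i[0,x_*)\subset\Omega$, which holds because this segment is connected, contains $0$, and satisfies $|f|<\rho$ there, since $f(ib)$ has modulus $b+\tfrac12\sinh2b$. One also needs the boundary extension at $w=i\rho$, where $f'(ix_*)=2\cosh^2x_*\neq0$, so the local regular continuation from \cref{prop:complex-exterior-map} applies. Finally, one should check that $c_*$ agrees with the constant of \cref{prop:PQ-real-zero-limit}; this is immediate from \eqref{eq:cstar-definition}.
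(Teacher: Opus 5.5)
Your proof is correct, but it excludes the other real points by a different mechanism than the paper.

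The paper works directly in the $x$-plane. From $\Im\bigl(\sin^2(a+ib)\bigr)=\tfrac12\sin(2a)\sinh(2b)$ it shows that $\csc^2x$ is real on the closed strip only when $b=0$, $a=0$, or $|a|=\rho$. It then solves $|f(x)|=\rho$ on each of these lines. On the real axis and on the strip edges this gives $x=\pm\rho$, hence $y=1$. On the imaginary axis it gives $x=\pm ix_*$, hence $y=-c_*$.

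You avoid that trigonometric case analysis. You use that the extension of $Y$ to $\overline{\mathbb D_R}$ is a continuous injection commuting with conjugation. A real boundary value then forces $s=\bar s$, that is, $s=\pm R$. This also gives some structural information: the two real points of $\Gamma$ are exactly the images of $\theta=0$ and $\theta=\pi$ in the angular parametrization \eqref{eq:complex-Gamma-angular-param}.

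The cost is that your exclusion step relies entirely on the boundary injectivity asserted in \cref{prop:complex-exterior-map}, which is the most delicate part of that construction. The paper's computation needs only $\Gamma\subseteq\csc^2(\partial\Omega)$ and $\partial\Omega\cap\partial\mathcal S=\{\pm\rho\}$, so it would survive without boundary injectivity.

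In the other direction, your treatment of the inclusion $\supseteq$ is more careful than the paper's. You check $\chi(i\rho)=ix_*$ using the connected segment $i[0,x_*)\subset\Omega$. The paper only notes that $|f(it)|=\rho$ forces $t=\pm x_*$, and leaves implicit that $ix_*$ really lies on $\partial\Omega$.

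One small wording point: what you announce as a degree count is really an injectivity-plus-symmetry argument.
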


\begin{proof}
Let $x=a+ib\in\partial\Omega$.  Since
\[
\Im\bigl(\sin^2(a+ib)\bigr)
=\frac12\sin(2a)\sinh(2b),
\]
the quantity $\csc^2x$ can be real in the closed central strip only when
$b=0$, $a=0$, or $|a|=\rho$.  On the real interval $[-\rho,\rho]$,
\eqref{eq:complex-f-derivative} gives $f'(x)\geq0$ and
\[
f(-\rho)=-\rho,
\qquad
f(\rho)=\rho.
\]
Hence $|f(x)|=\rho$ occurs there only at $x=\pm\rho$, giving $y=1$.

For $x=it$,
\begin{equation}
y=\csc^2(it)=-\frac1{\sinh^2t},
\qquad
f(it)=i\bigl(t+\sinh t\cosh t\bigr).
\label{eq:complex-negative-axis-param}
\end{equation}
The boundary condition $|f(it)|=\rho$ is therefore
\[
t+\sinh t\cosh t=\frac\pi2.
\]
By \cref{prop:PQ-real-zero-limit}, this equation has the unique positive
solution $t=x_*$ and $c_*=\sinh^{-2}x_*$.  Hence the negative intersection
is $-c_*$.  The cases $|a|=\rho$ reduce, by
\eqref{eq:complex-strip-boundary-f}, to $b=0$ and again give $y=1$.
\end{proof}

\subsection{Dominant singularities and uniform coefficient asymptotics}

We now pass from the geometry to coefficient asymptotics.  For later local
analysis it is convenient to name the two $z$-plane germs from
\cref{prop:PQ-generating-functions}:
\begin{equation}
\begin{aligned}
F_P(y,z)&:=\int_0^z\frac{\tan x}{D_y(x,z)^2}\,dx,
&
F_Q(y,z)&:=\int_0^z\frac{\sec x}{D_y(x,z)^2}\,dx.
\end{aligned}
\label{eq:complex-z-germs}
\end{equation}
Thus $F_P(y,\cdot)$ is even and $F_Q(y,\cdot)$ is odd.  Removing this
symmetry by $s=z^2$, define
\begin{equation}
\mathcal P(y,s):=\sum_{n\geq1}P_n(y)\frac{s^n}{(2n)!},
\qquad
\mathcal Q(y,s):=\sum_{n\geq1}Q_n(y)\frac{s^n}{(2n-1)!}.
\label{eq:complex-s-generating-functions}
\end{equation}
For $s=z^2$ near the origin,
\begin{equation}
\mathcal P(y,z^2)=F_P(y,z),
\qquad
\mathcal Q(y,z^2)=zF_Q(y,z),
\label{eq:complex-s-z-relation}
\end{equation}
and therefore
\begin{equation}
\begin{aligned}
\mathcal P(y,s)
&=\int_0^{\sqrt s}
\frac{\tan x}{\bigl(1-y(\sqrt s-x)\tan x\bigr)^2}\,dx,\\
\mathcal Q(y,s)
&=\sqrt s\int_0^{\sqrt s}
\frac{\sec x}{\bigl(1-y(\sqrt s-x)\tan x\bigr)^2}\,dx.
\end{aligned}
\label{eq:complex-s-integral-representations}
\end{equation}
The parity in \eqref{eq:complex-s-z-relation} shows that these are
single-valued analytic germs in $s$.

We shall also use the standard dented domains of singularity analysis.  For
$s_0\neq0$, $\eta>0$ and $0<\theta<\pi/2$, put
\begin{equation}
\Delta(s_0;\eta,\theta)
:=\left\{s:\ |s|<(1+\eta)|s_0|,\ s\neq s_0,\
\left|\arg\left(\frac{s}{s_0}-1\right)\right|>\theta\right\}.
\label{eq:complex-Delta-domain}
\end{equation}
The next proposition identifies the two possible dominant obstructions and,
in the form needed below, records the uniform $\Delta$-analyticity of the
principal continuation.

\begin{proposition}[Two singularity regimes and uniform continuation]
\label[proposition]{prop:complex-two-singularity-regimes}
Let $y\notin\Gamma$.
\begin{enumerate}
\item If $y\in E$, the principal analytic continuation of
$\mathcal P(y,\cdot)$ and $\mathcal Q(y,\cdot)$ has a unique singularity of
smallest modulus at
\begin{equation}
s=\Psi(y),
\qquad
|\Psi(y)|<R.
\label{eq:complex-moving-singularity}
\end{equation}
It is produced by a nondegenerate quadratic coalescence and is of algebraic
type $(1-s/\Psi(y))^{-3/2}$.

\item If $y\in K^\circ$, no principal moving critical value occurs for
$|s|<R$.  The unique dominant singularity of the principal branch is the
fixed endpoint singularity
\begin{equation}
s=R,
\label{eq:complex-fixed-singularity}
\end{equation}
and it is logarithmic.
\end{enumerate}
Moreover, these assertions hold uniformly on compact subsets.  More
precisely, for every $L\Subset E$ there exist $\eta>0$ and
$0<\theta<\pi/2$, independent of $y\in L$, such that the two functions are
holomorphic in
\[
\Delta\bigl(\Psi(y);\eta,\theta\bigr),
\]
and for every $L\Subset K^\circ$ there exist $\eta>0$ and
$0<\theta<\pi/2$ such that they are holomorphic in
\[
\Delta(R;\eta,\theta).
\]
In both cases the indicated dominant singularity is uniformly separated
from every other obstruction to the chosen principal continuation.
\end{proposition}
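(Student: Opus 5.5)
The plan is to continue the integral representations \eqref{eq:complex-s-integral-representations} analytically in $s$ by deforming the integration path, and to locate the singularities through the usual two obstructions of a parameter integral.
\begin{itemize}
\item[(a)] The integrand may have a non-integrable singularity at an interior point that pinches the path. This is a coalescence of a zero of $D_y(\cdot,z)$ with its double, i.e.\ a critical point in the sense of \cref{lem:complex-critical-parametrization}.
\item[(b)] The endpoint $x=z=\sqrt s$ may run into a singularity of $\tan x$ or $\sec x$, the nearest being $x=\pm\rho$, i.e.\ $s=R$.
\end{itemize}
Writing $z=\sqrt s$, I would first work in the $z$-plane with $F_P,F_Q$ and only pass to $s$ at the end via \eqref{eq:complex-s-z-relation}. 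Parity makes the $s$-germs single-valued, so the branch of $\sqrt s$ is immaterial.

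\textbf{Case $y\in E$.} By \cref{lem:complex-critical-parametrization}, the critical values in $z$ are $z=f(x)$ with $\csc^2x=y$. On the principal sheet, the relevant one is $x=\chi(w)$ with $w^2=\Psi(y)$, hence $s=\Psi(y)$, and $|\Psi(y)|<R$ by \cref{prop:complex-exterior-map}.
\begin{itemize}
\item I would show that for $|s|<|\Psi(y)|$ the zero set of $D_y(\cdot,\sqrt s)$ stays off a suitable path from $0$ to $\sqrt s$ inside $\Omega\cup\mathcal S$. The idea is that $D_y(x,z)=0$ means $z=x+1/(y\tan x)$. Zeros can only cross the path when two of them merge or a zero hits the endpoint. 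A zero at the endpoint $x=z$ is impossible, since there $D_y=1$.
\item Near $s_0=\Psi(y)$, the nondegeneracy $\partial_x^2D_y=2y\neq0$ and $\partial_zD_y\neq0$ from \eqref{eq:complex-critical-nondegenerate} let me write $D_y\approx y(x-x_0)^2-y\tan x_0\,(z-z_0)$.
\item The local integral $\int dx/(A(x-x_0)^2+B\epsilon)^2$ then behaves like $\epsilon^{-3/2}$. This gives the type $(1-s/\Psi(y))^{-3/2}$. The regular parts continue analytically through a slit, which yields $\Delta$-analyticity.
\item Uniqueness of this singularity at minimal modulus comes from univalence of $Y$: any other critical value $f(x')$ with $\csc^2x'=y$ on the principal continuation corresponds to a point of $\Omega$ related by $x'=\pm x_0+k\pi$. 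Such points have the same $s$ when $k=0$, and for $k\ne0$ they lie off the principal sheet or at larger modulus.
\end{itemize}

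\textbf{Case $y\in K^\circ$.} Here $\Psi(y)$ is undefined: no $s$ with $|s|<R$ has $Y(s)=y$, by the definition of $E=Y(\mathbb D_R)$. So no pinch occurs in $|s|<R$, and the continuation is obstructed only by the endpoint reaching $\pm\rho$.
\begin{itemize}
\item Near $x=\rho$, $\tan x\sim -1/(x-\rho)$. So $D_y\sim 1+y(z-x)/(x-\rho)$, and the integrand is $\tan x/D_y^2 \sim -(x-\rho)/\bigl((x-\rho)+y(z-x)\bigr)^2$.
\item Integrating up to $z$ near $\rho$ produces a term $\log(\rho-z)$ (times analytic factors), i.e.\ $\log(1-s/R)$. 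The same holds for $\sec$. I would check that the coefficient of the logarithm is nonzero for $y\neq1$; it should be a rational function of $y$.
\item Past $|s|=R$ the function continues across $\Delta(R;\eta,\theta)$ because the endpoint singularity is isolated.
\end{itemize}

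\textbf{Uniformity and the main obstacle.} For $L\Subset E$, continuity of $\Psi$, compactness, and the simple-zero structure give uniform $\eta,\theta$. The point is that all secondary critical values and the fixed point $R$ stay at modulus at least $(1+2\eta)|\Psi(y)|$ on $L$. Similarly, on $L\Subset K^\circ$ the moving critical values stay at distance bounded below from $\overline{\mathbb D_R}$.

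The hardest step will be the global topological one: showing that on the principal branch no zero of $D_y$ crosses the integration path before $|s|$ reaches the claimed singular modulus. First I would try to track the zero $x(s)$ of $D_y(\cdot,\sqrt s)$ that is born near the endpoint and reaches the path at the coalescence. I would use $Y$'s univalence as a monodromy argument: the pinch for the principal germ happens exactly when $s\in Y^{-1}(y)$.
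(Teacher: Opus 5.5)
Your proposal follows essentially the same route as the paper. The shared steps are:
(i) contour transport using $D_y(0,z)=D_y(z,z)=1$, so that only multiple zeros of $D_y$ and the endpoint poles $z=\pm\rho$ can obstruct;
(ii) identification of the principal pinch through $\chi$, all other critical values being $\pm z_0+k\pi$ with $|\pm z_0+k\pi|\geq\pi-|z_0|>\rho$ for $k\neq0$;
(iii) compactness for the uniform $\Delta$-domains;
(iv) the local quadratic and logarithmic models, which the paper defers to \cref{lem:complex-moving-expansion} and \cref{lem:complex-fixed-expansion}.

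The global step you flag as hardest is also the one the paper settles only by assertion, namely that pinch points reachable with $|z|<\rho$ lie in $\mathcal S$. From there, your $K^\circ$ conclusion follows from \cref{prop:complex-principal-domain} ($\Omega$ is the only component of $\{|f|<\rho\}$ in $\mathcal S$) together with the fact that $|f(x)|=\rho$ forces $\csc^2x\in\Gamma$. The restriction to reachable points cannot be dropped: if $x'\in\mathcal S$ satisfies $f(x')=-\pi$ (such points exist), then $y=\csc^2x'\in K^\circ$, yet $x'+\pi$ is an algebraic critical point with critical value $z=0$.
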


\begin{proof}
We first make the contour-continuation step explicit.  As long as the
endpoint $z$ remains away from the poles of $\tan$ and $\sec$, both
numerators in \eqref{eq:complex-z-germs} are holomorphic in the $x$-plane.
The possible moving poles are the zeros of $D_y(x,z)$.  Notice that
\begin{equation}
D_y(0,z)=D_y(z,z)=1,
\label{eq:complex-D-endpoints-nonzero}
\end{equation}
so a denominator zero never collides with either endpoint of the contour.
If a zero is simple, the implicit-function theorem makes it a holomorphic
function of the parameters locally, and the integration path can be
transported by a small isotopy so as to avoid it.  On a compact parameter
set free of multiple zeros, only finitely many such zero branches meet a
fixed compact tube around the transported contour, and the isotopy may be
chosen uniformly.  Therefore a genuine obstruction to the principal
continuation before an endpoint reaches a trigonometric pole can occur only
when a denominator zero becomes multiple, that is,
\begin{equation}
D_y(x,z)=\partial_xD_y(x,z)=0.
\label{eq:complex-pinch-equations}
\end{equation}
This is the one-dimensional pinch mechanism used below; it also shows that
all separation statements obtained below give the required uniform
$\Delta$-domains.

Suppose first that $y\in E$.  Since $\Psi(y)\in\mathbb D_R$, choose
$z_0$ with $z_0^2=\Psi(y)$ and put $x_0=\chi(z_0)\in\Omega$; changing the
sign of $z_0$ changes $x_0$ to $-x_0$.  By construction,
\begin{equation}
y=\csc^2x_0,
\qquad
z_0=f(x_0),
\qquad
|z_0|<\rho.
\label{eq:complex-principal-critical-pair}
\end{equation}
Conversely, any solution of \eqref{eq:complex-pinch-equations} that is
reachable from the Taylor germ while $|z|<\rho$ must lie in the central
strip $\mathcal S$.  By \cref{lem:complex-critical-parametrization}, its
critical value is $z=f(x)$.  If $|z|<\rho$, then
$x\in\{x\in\mathcal S:|f(x)|<\rho\}=\Omega$ by
\cref{prop:complex-principal-domain}.  Hence the only reachable critical
points are $\pm x_0$, giving the pair of $z$-values $\pm z_0$ and therefore
the single $s$-value $s=\Psi(y)$.  The first fixed endpoint poles occur at
$z=\pm\rho$, so the moving singularity is strictly dominant.

For later use, note also that all algebraic solutions of the critical-point
equations are
\begin{equation}
x=\pm x_0+k\pi,
\qquad
z=\pm z_0+k\pi,
\qquad k\in\mathbb Z.
\label{eq:complex-all-critical-translates}
\end{equation}
For $k\neq0$,
\[
|\pm z_0+k\pi|\geq |k|\pi-|z_0|
\geq\pi-|z_0|>\rho,
\]
so the principal pair is isolated even among all algebraic critical values.
If $L\Subset E$, then
\[
q_L:=\max_{y\in L}\frac{|\Psi(y)|}{R}<1,
\qquad
|z_0(y)|\leq\rho\sqrt{q_L}<\rho.
\]
Consequently the principal points $\pm z_0(y)$ stay a uniform positive
distance from the fixed poles $\pm\rho$ and from every translated critical
value.  Choosing $\eta>0$ so small that
$(1+\eta)q_L<1$, compactness, the contour argument above and
\eqref{eq:complex-critical-nondegenerate} give a common
$\Delta(\Psi(y);\eta,\theta)$ and a uniform quadratic local model for every
$y\in L$.

Now let $y\in K^\circ$.  If a reachable multiple denominator zero existed
with $|z|<\rho$, then its $x$-coordinate would lie in $\mathcal S$ and
\cref{lem:complex-critical-parametrization} would give
\[
|f(x)|<\rho,
\qquad
x\in\Omega,
\qquad
 y=\csc^2x\in E,
\]
a contradiction.  Equality $|f(x)|=\rho$ cannot occur either.  Indeed, if
$x\in\mathcal S$ and $f'(x)\neq0$, the regular level arc $|f|=\rho$ bounds
the unique sublevel component $\Omega$, hence $x\in\partial\Omega$ and
$\csc^2x\in\Gamma$; if $f'(x)=0$, then $x=\pm\rho$ and again
$y=1\in\Gamma$.  Thus for $y\in K^\circ$ every reachable critical value is
strictly outside $|z|=\rho$.

This separation is uniform on compact subsets of $K^\circ$.  Otherwise
there would be $y_j$ in a compact $L\Subset K^\circ$ and reachable critical
points $x_j\in\mathcal S$ with $|f(x_j)|\downarrow\rho$.  The uniform growth
$|f(a+ib)|\to\infty$ for $|a|\leq\rho$ forces $(x_j)$ to be bounded, so a
subsequence converges to $x_\infty\in\overline{\mathcal S}$ with
$|f(x_\infty)|=\rho$ and
$y_j=\csc^2x_j\to\csc^2x_\infty\in\Gamma$, contradicting
$L\cap\Gamma=\varnothing$.  Hence the first obstruction of the principal
branch is the endpoint pole at $z=\pm\rho$, which becomes the single point
$s=R$.  The next trigonometric poles are a positive distance away, and the
same compact contour argument yields a uniform dented neighborhood
$\Delta(R;\eta,\theta)$.

Finally, \eqref{eq:complex-critical-nondegenerate} gives
$\partial_x^2D_y=2y\neq0$ and $\partial_zD_y\neq0$ at every principal
moving critical point, so the coalescence is quadratic.  The local integral
of a squared denominator at such a quadratic pinch has exponent $-3/2$;
the exact expansion is computed in
\cref{lem:complex-moving-expansion}.  The endpoint expansion at $s=R$ is
computed in \cref{lem:complex-fixed-expansion} and is logarithmic.
\end{proof}

We record the local singular expansions.  They will also show explicitly
that the leading amplitudes never vanish.

\begin{lemma}[Fixed endpoint expansion]
\label[lemma]{lem:complex-fixed-expansion}
Uniformly for $y$ in compact subsets of $K^\circ$,
\begin{equation}
\begin{aligned}
\mathcal P(y,s)
&=-\frac1{(1-y)^2}
\log\left(1-\frac{s}{R}\right)
+H_P(y,s)
+O\left(\left(1-\frac{s}{R}\right)
\log\left(1-\frac{s}{R}\right)\right),\\
\mathcal Q(y,s)
&=-\frac{\rho}{(1-y)^2}
\log\left(1-\frac{s}{R}\right)
+H_Q(y,s)
+O\left(\left(1-\frac{s}{R}\right)
\log\left(1-\frac{s}{R}\right)\right),
\end{aligned}
\label{eq:complex-fixed-local-expansion}
\end{equation}
as $s\to R$ in a dented neighborhood of $R$, where $H_P$ and $H_Q$ are
holomorphic in a full neighborhood of $s=R$.
\end{lemma}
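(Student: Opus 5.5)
The plan is to localize the integral representation \eqref{eq:complex-s-integral-representations} near the endpoint pole and evaluate a rational model integral exactly. By parity \eqref{eq:complex-s-z-relation} it suffices to work near $z=\rho$ with $s=z^2$. Put $\delta:=\rho-z$, so that
\[
1-\frac sR=\frac{2\delta}{\rho}-\frac{\delta^2}{\rho^2}
\]
and $\log(1-s/R)=\log\delta+\text{(holomorphic in }\delta)$. A holomorphic change of variable $\delta\leftrightarrow s$ near $s=R$ therefore converts an expansion in $\delta$ into one of the form \eqref{eq:complex-fixed-local-expansion}. Fix a small $v_0>0$ and split the $x$-contour at $x_1:=\rho-v_0$, using the contour already transported by \cref{prop:complex-two-singularity-regimes}.

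\textbf{The piece $\int_0^{x_1}$.} Its integrand is analytic in $(x,z)$ for $z$ near $\rho$. By \cref{prop:complex-two-singularity-regimes}, for $y$ in a compact $L\Subset K^\circ$ no pinch occurs for $|z|\le\rho(1+\eta)^{1/2}$. Hence this piece is holomorphic in a full neighborhood of $z=\rho$, uniformly in $y\in L$, and it is absorbed into $H_P$ (respectively $H_Q$).

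\textbf{The piece $\int_{x_1}^{z}$ and the model integral.} Substitute $v=\rho-x$, so $v$ runs from $v_0$ down to $\delta$. Write
\[
\tan x=\frac1v+g(v),\qquad \sec x=\frac1v+h(v),
\]
with $g,h$ holomorphic near $v=0$ and $g(0)=h(0)=0$ (the Laurent expansions at $x=\rho$ have residue $-1$ in $x$, hence $+1/v$). Then
\[
z-x=v-\delta,\qquad
D_y=\frac{(1-y)v+y\delta}{v}\bigl(1+O(v^2)\bigr),
\]
where the $O$ is uniform because $(v-\delta)/v$ is bounded on the path. The leading integrand becomes
\[
\frac{v}{\bigl((1-y)v+y\delta\bigr)^2}.
\]
Integrating in closed form from $\delta$ to $v_0$ gives
\[
\frac{1}{(1-y)^2}\log\frac1\delta+\text{(holomorphic in }\delta)+O(\delta\log\delta).
\]
The $\log\delta$ terms from the two endpoints combine using $(1-y)\delta+y\delta=\delta$. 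Hence the coefficient of $\log(1-s/R)$ is $-1/(1-y)^2$ for $\mathcal P$. For $\mathcal Q$ the same holds, with the extra factor $\sqrt s=\rho+O(\delta)$ producing $-\rho/(1-y)^2$; the $O(\delta)$ part of this factor only feeds the error term. This requires $1-y\neq0$, which holds uniformly on $L$ because $1\in\Gamma$ by \cref{prop:complex-real-intersections}.

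\textbf{The correction terms.} These arise from $g$, $h$ and the $O(v^2)$ factor in $D_y$. Expanding them gives integrals of the form
\[
\int_\delta^{v_0}\frac{v^{k+1}\,dv}{\bigl((1-y)v+y\delta\bigr)^2},\qquad k\ge1.
\]
Each is a holomorphic function of $\delta$ plus $O(\delta^{k}\log\delta)$, and all are dominated by $O(\delta\log\delta)$. Summing the series in $k$ is justified by the convergence of the expansions on $|v|\le v_0$.

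\textbf{Main obstacle: uniformity.} The key difficulty is controlling the model denominator. It vanishes at
\[
v=-\frac{y\delta}{1-y},
\]
which must stay at distance $\gtrsim\delta$ from the integration path, and this is needed uniformly in $y\in L$ and in the dented sector $|\arg(s/R-1)|>\theta$. The first thing to try is to rescale $v=\delta w$, so that the path runs from $w=1$ to $w=v_0/\delta$, and to check that the pole $w=-y/(1-y)$ is kept off a fixed tube around this path. This check uses the contour isotopy of \cref{prop:complex-two-singularity-regimes} together with $y\notin\Gamma$. Once this separation is in hand, the rescaled integrals give the stated error bounds uniformly on $L$.
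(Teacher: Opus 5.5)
Your proposal is correct and follows the paper's proof essentially step for step. Your $v=\rho-x$ is the paper's $\nu=\delta+u$, your kernel $v/((1-y)v+y\delta)^2$ is its model $M_y(\delta,u)$ integrated in closed form, and your \emph{main obstacle} is exactly its uniform bound $|\delta+(1-y)u|\ge c_L(|\delta|+|u|)$, which the paper likewise takes from the contour isotopy in \cref{prop:complex-two-singularity-regimes}. One small imprecision: the relative correction in $D_y$ is $-yv(v-\delta)g(v)/((1-y)v+y\delta)$, which depends on $\delta$, so expanding $D_y^{-2}$ also produces integrands of the form $v^{a}(v-\delta)^m/((1-y)v+y\delta)^{m+2}$, not only the ones you list; the same substitution $w=(1-y)v+y\delta$, together with that lower bound for summing the series, still shows each is holomorphic plus $O(\delta\log\delta)$.
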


\begin{proof}
Fix $L\Subset K^\circ$.  By
\cref{prop:complex-two-singularity-regimes}, a single dented neighborhood of
$s=R$ works for all $y\in L$.  Split the transported contour into a short
endpoint piece and a remaining piece.  The latter stays in a compact set on
which the integrands are jointly holomorphic in $(y,s)$, so it contributes
to the holomorphic functions $H_P$ and $H_Q$.

On the endpoint piece write
\[
z=\rho-\delta,
\qquad
x=z-u,
\qquad
\nu:=\delta+u.
\]
Uniformly for $y\in L$ and $u,\delta\to0$ in the chosen dented sector,
\begin{equation}
\tan(z-u)=\frac1\nu+O(\nu),
\qquad
\sec(z-u)=\frac1\nu+O(\nu),
\label{eq:complex-tan-sec-endpoint}
\end{equation}
and
\begin{equation}
D_y(z-u,z)
=\frac{\delta+(1-y)u}{\nu}+O_L(\nu^2).
\label{eq:complex-D-endpoint}
\end{equation}
By the uniform continuation statement in
\cref{prop:complex-two-singularity-regimes}, the short endpoint contours
may be chosen, after a finite subdivision of $L$ if necessary, so that
\[
|\delta+(1-y)u|\geq c_L\bigl(|\delta|+|u|\bigr)
\]
throughout the local analysis.  Hence inversion of
\eqref{eq:complex-D-endpoint} is uniform there, and the common singular
model for the two local kernels is
\[
M_y(\delta,u):=
\frac{\delta+u}{\bigl(\delta+(1-y)u\bigr)^2}.
\]
Put $a:=1-y$.  Since $L$ stays a positive distance from $y=1$, $a$ is
uniformly bounded away from zero.  For a fixed small $\varepsilon>0$,
\begin{equation}
\int_0^\varepsilon M_y(\delta,u)\,du
=-\frac1{a^2}\log\delta+G(y,\delta),
\label{eq:complex-endpoint-model-integral}
\end{equation}
where $G$ is holomorphic near $L\times\{0\}$.  Indeed, direct integration
gives
\[
\frac1{a^2}\log(\delta+a\varepsilon)
-\frac1{a^2}\log\delta
+\frac{a-1}{a^2}
\left(1-\frac{\delta}{\delta+a\varepsilon}\right).
\]
Subtracting the model from the exact kernels and using
\eqref{eq:complex-tan-sec-endpoint}--\eqref{eq:complex-D-endpoint} shows,
after integration, that the remaining nonholomorphic contribution is
$O_L(\delta\log\delta)$.  Thus
\[
F_P(y,z)
=-\frac1{(1-y)^2}\log\delta+\widetilde H_P(y,z)
+O_L(\delta\log\delta),
\]
while multiplication by the prefactor $z$ in
\eqref{eq:complex-s-z-relation} gives
\[
\mathcal Q(y,z^2)
=-\frac{\rho}{(1-y)^2}\log\delta+\widetilde H_Q(y,z)
+O_L(\delta\log\delta).
\]
Finally,
\begin{equation}
1-\frac{s}{R}
=1-\left(1-\frac\delta\rho\right)^2
=\frac{2\delta}{\rho}+O(\delta^2),
\label{eq:complex-s-delta-relation}
\end{equation}
so replacing $\log\delta$ by $\log(1-s/R)$ changes only the holomorphic
part, and $\delta\log\delta=O((1-s/R)\log(1-s/R))$.  This yields
\eqref{eq:complex-fixed-local-expansion}, uniformly for $y\in L$.
\end{proof}

\begin{lemma}[Moving critical expansion]
\label[lemma]{lem:complex-moving-expansion}
Let $L\Subset E$.  Cover $L$ by finitely many simply connected domains
$U_j\Subset E$.  On each $U_j$, choose the branches in
\eqref{eq:complex-Psi-Phi} by analytic continuation from $y>1$ along a
fixed path in $E$.  Then, uniformly on $L\cap U_j$,
\begin{equation}
\begin{aligned}
\mathcal P(y,s)
&=C_P(y)\left(1-\frac{s}{\Psi(y)}\right)^{-3/2}
+O_L\left(\left|1-\frac{s}{\Psi(y)}\right|^{-1}\right),\\
\mathcal Q(y,s)
&=C_Q(y)\left(1-\frac{s}{\Psi(y)}\right)^{-3/2}
+O_L\left(\left|1-\frac{s}{\Psi(y)}\right|^{-1}\right),
\end{aligned}
\label{eq:complex-moving-local-expansion}
\end{equation}
as $s\to\Psi(y)$ in the corresponding uniform dented domains, where
\begin{equation}
C_P(y)=
\frac{\pi\sqrt2\,(y-1)^{1/4}}{y^2\Psi(y)^{3/4}},
\qquad
C_Q(y)=
\frac{\pi\sqrt2\,(y-1)^{1/4}}{y^{3/2}\Psi(y)^{1/4}}.
\label{eq:complex-moving-C-amplitudes}
\end{equation}
All fractional powers in each formula are taken compatibly on the chosen
$U_j$.  In particular, neither amplitude vanishes.  Their branch-independent
squares are
\begin{equation}
C_P(y)^2=-\pi^2\frac{\Psi'(y)}{y^2\Psi(y)^2},
\qquad
C_Q(y)^2=-\pi^2\frac{\Psi'(y)}{y\Psi(y)}.
\label{eq:complex-moving-C-squares}
\end{equation}
\end{lemma}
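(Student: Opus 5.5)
We would prove \cref{lem:complex-moving-expansion} by a local saddle (pinch) analysis of the $z$-integrals \eqref{eq:complex-z-germs} near the principal critical pair, followed by the change of variables $s=z^2$.

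\textbf{Setup and local model.} Fix $y\in L\cap U_j$, let $z_0=z_0(y)$ with $z_0^2=\Psi(y)$, and let $x_0=\chi(z_0)\in\Omega$, so that $y=\csc^2x_0$ and $z_0=f(x_0)$. By \cref{prop:complex-two-singularity-regimes}, only the two pinch points $\pm x_0$ (with $z$-values $\pm z_0$) matter, and by parity it suffices to treat $z\to z_0$. Write $z=z_0-\varepsilon$ and $x=x_0+v$. \Cref{lem:complex-critical-parametrization} gives
\[
D_y(x,z)=y v^2+y\tan x_0\,\varepsilon+O(|v|^3+|\varepsilon||v|+|\varepsilon|^2),
\]
since $\partial_x^2D_y=2y$ and $\partial_zD_y=-y\tan x_0$. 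We split the contour into a short piece through $x_0$ and a remainder. The remainder contributes a function holomorphic in $z$ near $z_0$, uniformly on $L$ by the uniform dented domains. On the short piece we use the model integral
\[
\int_{-\delta}^{\delta}\frac{dv}{(yv^2+A\varepsilon)^2}=\frac{\pi}{2y^{1/2}(A\varepsilon)^{3/2}}+O(1),
\qquad A=y\tan x_0,
\]
with the branch fixed by the pinching geometry; this is the residue or standard integral $\int_{\mathbb R}(v^2+c^2)^{-2}dv=\pi/(2c^3)$. The subleading terms of $D_y$ perturb this by $O(|\varepsilon|^{-1})$, which we obtain by subtracting the model and estimating the difference with the cubic error, again uniformly. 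This yields
\[
F_P\approx \tan x_0\cdot\frac{\pi}{2y^{1/2}(y\tan x_0)^{3/2}}\,\varepsilon^{-3/2},
\]
and the same with $\sec x_0$ in place of $\tan x_0$ for $F_Q$.

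\textbf{Converting to $s$.} We have $1-s/\Psi=1-z^2/z_0^2\approx 2\varepsilon/z_0$, so $\varepsilon^{-3/2}=(2/z_0)^{-3/2}(1-s/\Psi)^{-3/2}$. The contribution of $-z_0$ is identical by parity (for $\mathcal Q$ the factor $z$ in \eqref{eq:complex-s-z-relation} restores evenness), and this doubles the amplitude. Next we express everything through $y$ using $\sin^2x_0=1/y$, $\cos^2x_0=(y-1)/y$, $\tan x_0=(y-1)^{-1/2}$, $\sec x_0=\sqrt y/\sqrt{y-1}$, and $z_0=\Psi^{1/2}$. Collecting powers should give \eqref{eq:complex-moving-C-amplitudes}. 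For instance, for $P$ we get $\pi\,y^{-2}\tan^{-1/2}x_0\,(2/z_0)^{-3/2}$ times the doubling factor. This contains $(y-1)^{1/4}$ and $\Psi^{3/4}$, and the constant $\sqrt2$ appears after the factors $2^{-3/2}\cdot 2$ are combined; we would check this bookkeeping carefully. For $Q$ there is an extra $\sec x_0/\tan x_0=\sqrt y$ and an extra $z_0=\Psi^{1/2}$, giving $y^{-3/2}\Psi^{-1/4}$.

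\textbf{Squares and nonvanishing.} For \eqref{eq:complex-moving-C-squares}, we differentiate $\Psi=\Phi^2$ from \eqref{eq:complex-Psi-Phi}. A direct computation gives $\Phi'(y)=-\sqrt{y-1}/y^2$, hence
\[
\Psi'=-2\Phi\sqrt{y-1}\,y^{-2}.
\]
Then $C_P^2=2\pi^2(y-1)^{1/2}y^{-4}\Psi^{-3/2}=-\pi^2\Psi'/(y^2\Psi^2)$, and similarly for $C_Q^2$. Nonvanishing follows because $y\neq1$ on $E$ and $\Psi\neq0$ there ($\Psi$ is univalent with $\Psi(\infty)=0$ and finite $y$).

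\textbf{Main obstacle.} The hardest step is the uniform error estimate together with the consistent choice of branches of $(A\varepsilon)^{3/2}$. The sign depends on which way the two coalescing zeros pinch the transported contour. We would fix the branch at $y>1$ real, where everything is positive and explicit, and then propagate it by continuity along the chosen paths in $U_j$, using that the amplitude never vanishes.
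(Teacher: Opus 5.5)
Your route is the same as the paper's. You Taylor-expand $D_y$ at the principal critical pair using $\partial_x^2D_y=2y$ and $\partial_zD_y=-y\tan x_0$, and reduce to the canonical integral $\int_{\mathbb R}(\xi^2+A)^{-2}\,d\xi=\pi/(2A^{3/2})$ with the branch fixed at real $y>1$. You then pass to $s=z^2$ and obtain $C_Q$ from $C_P$ via $z_0\sec x_0/\tan x_0=\sqrt{y\Psi(y)}$. Your $z$-plane result $F_P\approx \pi\bigl(2y^2\sqrt{\tan x_0}\bigr)^{-1}\varepsilon^{-3/2}$ is correct, as are the computation of the squares and the nonvanishing argument. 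The passage to $s$, however, contains a genuine error. Since the lemma is precisely a statement about the amplitudes, this is not cosmetic.

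\emph{The conversion exponent.} From $1-s/\Psi=2\varepsilon/z_0+O(\varepsilon^2)$ one gets $\varepsilon^{-3/2}=(2/z_0)^{3/2}(1-s/\Psi)^{-3/2}\bigl(1+O(|1-s/\Psi|)\bigr)$. Your factor $(2/z_0)^{-3/2}$ has the wrong sign in the exponent and would put $\Psi^{3/4}$ in the numerator.

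\emph{The doubling for $-z_0$.} This is the more serious problem: you must not double. Near $s=\Psi(y)$ one has $\mathcal P(y,s)=F_P(y,z)$, with $z$ the square root of $s$ near $z_0$. The point $-z_0$ is the other preimage of the same $s$. By evenness it gives the same value of the same function, not an additional singular term; as you noted yourself, by parity it suffices to treat $z\to z_0$. A factor $2$ from $\pm z_0$ appears only when one extracts $[z^{2n}]F_P$ in the $z$-plane. There, together with $(2n)^{1/2}=\sqrt2\,n^{1/2}$, it reproduces exactly the factor $2^{3/2}$ that, in the $s$-plane, comes from $1-z/z_0\sim\frac12(1-s/\Psi)$. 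Using both double-counts.

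The correct bookkeeping is
\[
C_P=\frac{\pi}{2y^2\sqrt{\tan x_0}}\Bigl(\frac{2}{z_0}\Bigr)^{3/2}
=\frac{\pi\sqrt2\,(y-1)^{1/4}}{y^2\Psi(y)^{3/4}},
\]
so the $\sqrt2$ is $2^{3/2}/2$; note that $2^{-3/2}\cdot2=2^{-1/2}$, not $\sqrt2$. With doubling you would get $2C_P$ and $2C_Q$, off by a factor $4$ in \eqref{eq:complex-moving-C-squares}.

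An independent check of the mechanism is the rescaled limit $y\to\infty$, i.e.\ $u=0$ in \cref{lem:complex-infinity-control}. There the generating function is $\sum_{n\geq1}\xi^{2n}/\binom{2n}{n}$, whose leading singular term at $\xi^2=4$ is $\frac{\pi}{2}(1-\xi^2/4)^{-3/2}$. The undoubled pinch computation at $(\eta,\xi)=(1,2)$ gives exactly $\frac\pi2$, whereas doubling would give $\pi$.
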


\begin{proof}
Fix one of the domains $U_j$.  Let $z_0(y)$ be the compatible square root of
$\Psi(y)$ and put
\[
x_0(y):=\chi(z_0(y)),
\qquad
z_0=f(x_0),
\qquad
y=\csc^2x_0.
\]
For $y>1$ these are the positive real branches, with
$0<x_0<z_0<\rho$.  The real contour is then pinched at $x=x_0$ as
$z\uparrow z_0$, which fixes the sign of the leading coefficient; the
formula on $U_j$ is its analytic continuation.

Set
\[
\eta:=1-\frac z{z_0},
\qquad
\xi:=x-x_0.
\]
Using \eqref{eq:complex-critical-nondegenerate} and
$z-z_0=-z_0\eta$, Taylor expansion at $(x_0,z_0)$ gives, uniformly on
compact subsets of $U_j$,
\begin{equation}
D_y(x,z)
=y\bigl(\xi^2+z_0\tan x_0\,\eta\bigr)
+O\bigl(\xi^3+|\eta\xi|+\eta^2\bigr).
\label{eq:complex-D-Morse-local}
\end{equation}
The analytic Morse lemma straightens the local contour and the quadratic
part simultaneously.  The singular coefficient is therefore the one of
the canonical integral
\begin{equation}
\int_{-\infty}^{\infty}\frac{d\xi}{(\xi^2+A)^2}
=\frac{\pi}{2A^{3/2}},
\qquad
A=z_0\tan x_0\,\eta,
\label{eq:complex-canonical-coalescence-integral}
\end{equation}
with the branch fixed by $A>0$ in the real case $y>1$.  Since the numerator
of $F_P$ is $\tan x_0+O(\xi)$, we obtain
\begin{equation}
F_P(y,z)
=\frac{\pi}{2y^2z_0^{3/2}\sqrt{\tan x_0}}
\left(1-\frac z{z_0}\right)^{-3/2}
+O_L\left(\left|1-\frac z{z_0}\right|^{-1}\right).
\label{eq:complex-FP-local-z}
\end{equation}
Now
\[
1-\frac{s}{\Psi(y)}
=1-\left(\frac z{z_0}\right)^2
=2\left(1-\frac z{z_0}\right)
+O\left(\left(1-\frac z{z_0}\right)^2\right),
\]
so the passage from $z$ to $s=z^2$ multiplies the leading coefficient by
$2^{3/2}$.  The critical equations also give
\[
\tan^2x_0=\frac1{y-1},
\qquad
z_0^2=\Psi(y),
\]
which yields the first formula in
\eqref{eq:complex-moving-C-amplitudes}.

For $\mathcal Q$, the prefactor $z$ in
\eqref{eq:complex-s-z-relation} contributes $z_0$, while the local
numerator changes from $\tan x_0$ to $\sec x_0$.  Hence
\[
\frac{C_Q(y)}{C_P(y)}
=\frac{z_0\sec x_0}{\tan x_0}
=\frac{z_0}{\sin x_0}
=\sqrt{y\Psi(y)},
\]
with compatible square roots, proving the second formula.

Differentiating \eqref{eq:complex-Psi-Phi} on the same branch gives
\[
\Phi'(y)=-\frac{\sqrt{y-1}}{y^2},
\qquad
\Psi'(y)=-\frac{2\sqrt{\Psi(y)}\sqrt{y-1}}{y^2}.
\]
Squaring \eqref{eq:complex-moving-C-amplitudes} therefore gives
\eqref{eq:complex-moving-C-squares}.  Since $\Psi$ is conformal on $E$,
$\Psi'(y)\neq0$, and $y\Psi(y)\neq0$ at every finite point of $E$; hence the
amplitudes are nonzero.  Finally, the uniform $\Delta$-domains from
\cref{prop:complex-two-singularity-regimes}, together with compactness of
the finite cover and the analytic Morse reduction, give the stated uniform
remainder.
\end{proof}

We now transfer these local expansions to the polynomial coefficients.  We
use the logarithmic and algebraic transfer theorems of singularity analysis;
see \cite[Chapter~VI]{FlajoletSedgewick2009}.  The uniform dented domains in
\cref{prop:complex-two-singularity-regimes} are precisely the hypothesis
needed for uniform transfer on compact parameter sets.

\begin{proposition}[Uniform two-phase asymptotics]
\label[proposition]{prop:complex-uniform-two-phase-asymptotics}
Define the monic polynomials
\begin{equation}
\widehat P_n(y):=\frac{P_n(y)}{(n!)^2},
\qquad
\widehat Q_n(y):=\frac{Q_n(y)}{n!(n-1)!}.
\label{eq:complex-monic-normalization}
\end{equation}
If $L\Subset K^\circ$, then uniformly for $y\in L$,
\begin{equation}
\begin{aligned}
\widehat P_n(y)
&=\frac{\mathfrak c^n}{\sqrt\pi\,(1-y)^2n^{3/2}}
\bigl(1+O_L(n^{-1})\bigr),\\
\widehat Q_n(y)
&=\frac{\sqrt\pi\,\mathfrak c^n}{4(1-y)^2n^{3/2}}
\bigl(1+O_L(n^{-1})\bigr).
\end{aligned}
\label{eq:complex-interior-asymptotics}
\end{equation}
If $L\Subset E$, choose a finite simply connected cover and compatible
branches as in \cref{lem:complex-moving-expansion}.  Then, uniformly on the
members of that finite cover and hence uniformly on $L$,
\begin{equation}
\begin{aligned}
\widehat P_n(y)
&=A_P(y)\left(\frac4{\Psi(y)}\right)^n
\bigl(1+O_L(n^{-1/2})\bigr),\\
\widehat Q_n(y)
&=A_Q(y)\left(\frac4{\Psi(y)}\right)^n
\bigl(1+O_L(n^{-1/2})\bigr).
\end{aligned}
\label{eq:complex-exterior-asymptotics}
\end{equation}
where, on each patch,
\begin{equation}
A_P(y)=\frac{2\sqrt2\,(y-1)^{1/4}}{y^2\Psi(y)^{3/4}},
\qquad
A_Q(y)=\frac{\sqrt2\,(y-1)^{1/4}}{y^{3/2}\Psi(y)^{1/4}}.
\label{eq:complex-A-amplitudes}
\end{equation}
Equivalently,
\begin{equation}
A_P(y)^2=-4\frac{\Psi'(y)}{y^2\Psi(y)^2},
\qquad
A_Q(y)^2=-\frac{\Psi'(y)}{y\Psi(y)}.
\label{eq:complex-A-squares}
\end{equation}
In particular, every leading amplitude occurring above is nonzero.
\end{proposition}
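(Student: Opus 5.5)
The plan is to read off the coefficients $[s^n]\mathcal P(y,s)=P_n(y)/(2n)!$ and $[s^n]\mathcal Q(y,s)=Q_n(y)/(2n-1)!$ by singularity analysis. We apply the transfer theorems of \cite[Chapter~VI]{FlajoletSedgewick2009} to the local expansions of \cref{lem:complex-fixed-expansion} and \cref{lem:complex-moving-expansion}. The $\Delta$-analyticity hypotheses are supplied, uniformly in $y\in L$, by \cref{prop:complex-two-singularity-regimes}. Afterwards we multiply by the central-binomial normalizations
\[
\frac{(2n)!}{(n!)^2}=\binom{2n}{n}=\frac{4^n}{\sqrt{\pi n}}\bigl(1+O(n^{-1})\bigr),
\qquad
\frac{(2n-1)!}{n!(n-1)!}=\frac12\binom{2n}{n}.
\]

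\emph{Interior case $L\Subset K^\circ$.} Split $\mathcal P(y,s)$ into three parts: the logarithmic term, the holomorphic part $H_P$, and the remainder $O((1-s/R)\log(1-s/R))$.
\begin{itemize}
\item Since $[s^n]\bigl(-\log(1-s/R)\bigr)=R^{-n}/n$ exactly, the logarithmic term gives $R^{-n}/(n(1-y)^2)$.
\item The function $H_P$ is holomorphic in a full neighbourhood of $s=R$. Together with the holomorphy in $\Delta(R;\eta,\theta)$, this means it is analytic in a disc of radius $>R$ with the dented parts removed. Its coefficients are therefore $O_L((R(1+\eta'))^{-n})$ after a Cauchy estimate on a suitable contour, which is exponentially negligible.
\item The remainder transfers to $O_L(R^{-n}n^{-2})$.
\end{itemize}
Hence $P_n(y)/(2n)!=R^{-n}n^{-1}(1-y)^{-2}(1+O_L(n^{-1}))$. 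Multiplying by $\binom{2n}{n}$ and using $4/R=\mathfrak c$ yields the first line of \eqref{eq:complex-interior-asymptotics}.

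For $\mathcal Q$ the computation is identical with the extra factor $\rho=\pi/2$ and the normalization $\tfrac12\binom{2n}{n}$. The constant becomes $\tfrac{\pi}{2}\cdot\tfrac{1}{2\sqrt\pi}=\tfrac{\sqrt\pi}{4}$, as claimed. The factor $(1-y)^{-2}$ is bounded because $1\in\Gamma$ is at positive distance from $L$.

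\emph{Exterior case $L\Subset E$.} On each patch $U_j$ of the finite cover, the standard transfer gives
\[
[s^n]\Bigl(1-\frac{s}{\Psi}\Bigr)^{-3/2}=\Psi^{-n}\frac{n^{1/2}}{\Gamma(3/2)}\bigl(1+O(n^{-1})\bigr)=\Psi^{-n}\frac{2\sqrt n}{\sqrt\pi}\bigl(1+O(n^{-1})\bigr).
\]
The remainder $O_L(|1-s/\Psi|^{-1})$ transfers to $O_L(|\Psi|^{-n})$, which is a relative error $O(n^{-1/2})$. This is where the weaker error term in \eqref{eq:complex-exterior-asymptotics} comes from.

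Multiplying by the normalizations then gives
\[
\widehat P_n=\frac{2C_P}{\pi}\Bigl(\frac4\Psi\Bigr)^n\bigl(1+O_L(n^{-1/2})\bigr),
\qquad
\widehat Q_n=\frac{C_Q}{\pi}\Bigl(\frac4\Psi\Bigr)^n\bigl(1+O_L(n^{-1/2})\bigr).
\]
Inserting \eqref{eq:complex-moving-C-amplitudes} gives exactly \eqref{eq:complex-A-amplitudes}, namely $A_P=2C_P/\pi$ and $A_Q=C_Q/\pi$. Squaring and using \eqref{eq:complex-moving-C-squares} gives \eqref{eq:complex-A-squares}. Nonvanishing of the amplitudes is inherited from \cref{lem:complex-moving-expansion}. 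Note also that $|\Psi(y)|$ is bounded below on $L$, so the relative error is meaningful.

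\emph{Main obstacle: uniformity of the transfer constants in $y$.} The transfer theorems are usually stated for a single function, so the constants must be shown not to depend on $y$. I would redo the Hankel-contour proof with the contour chosen once for all $y\in L$; in the exterior case it is rescaled by $\Psi(y)$, using the common $\eta,\theta$ from \cref{prop:complex-two-singularity-regimes}. One then needs the $O_L$ constants in the local expansions to be uniform on the whole dented domain, not merely near the singular point. Away from the singularity this follows from continuity of the integral representations \eqref{eq:complex-s-integral-representations} on a compact set in $(y,s)$ free of obstructions. Near the singularity it follows from the uniform local models in the two lemmas. With these uniform bounds, the outer circle contributes $O((1+\eta)^{-n}|s_0|^{-n})$ uniformly, and the Hankel part gives the stated main terms with uniform error constants.
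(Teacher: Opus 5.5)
Your route is the paper's: transfer the local expansions of \cref{lem:complex-fixed-expansion,lem:complex-moving-expansion} on the uniform $\Delta$-domains of \cref{prop:complex-two-singularity-regimes}, then multiply by $\binom{2n}{n}$ and $\tfrac12\binom{2n}{n}$, which gives $A_P=2C_P/\pi$, $A_Q=C_Q/\pi$ and the square identities. Your constants all check out. The exterior step is right: the $O(|1-s/\Psi|^{-1})$ remainder costs exactly a relative factor $n^{-1/2}$. Your treatment of uniformity in $y$ is also more explicit than the paper's, which only asserts that uniform $\Delta$-domains suffice.

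One step in the interior case does not hold as written, and the paper's proof passes over it in the same way. The big-$O$ transfer theorem turns a remainder known only to be $O\bigl((1-s/R)\log(1-s/R)\bigr)$ into $O_L(R^{-n}n^{-2}\log n)$, not $O_L(R^{-n}n^{-2})$. The $\log n$ cannot be removed using big-$O$ information alone. For $s_0=2+i\theta$ with $\theta\neq0$, the function $-\partial_s\operatorname{Li}_s(z)|_{s=s_0}+\zeta'(s_0)$ is $\Delta$-analytic and $O(|1-z|\,|\log(1-z)|)$ at $z=1$. Yet its $n$-th coefficient $n^{-s_0}\log n$ has modulus $n^{-2}\log n$. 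So from \cref{lem:complex-fixed-expansion} as stated you only get a relative error $O_L(n^{-1}\log n)$.

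To reach $O_L(n^{-1})$ you must use more structure of the endpoint singularity. With $z=\rho-\delta$ and $x=z-u$, the endpoint integrand for $\mathcal P$ is exactly $\tan(\delta+u)/(\tan(\delta+u)-yu)^2$. It has a double pole at $u=u^*(\delta)=-\delta/(1-y)+O(\delta^2)$. Its $(u-u^*)^{-2}$-coefficient is $y u^*/(\sec^2(\delta+u^*)-y)^2$, a holomorphic multiple of $u^*$. Subtracting the principal part and integrating therefore gives $\mathcal P=g_0(y,s)\log(1-s/R)+g_1(y,s)$ near $s=R$. Here $g_0,g_1$ are jointly holomorphic and $g_0(y,R)=-(1-y)^{-2}$; the same holds for $\mathcal Q$. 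Now $[s^n](1-s/R)\log(1-s/R)=R^{-n}/(n(n-1))$ exactly, and what remains is $O\bigl(|1-s/R|^2|\log(1-s/R)|\bigr)+O(1-s/R)$. Hence the error becomes $O_L(R^{-n}n^{-2})$, as claimed. This affects only the stated rate; \cref{thm:complex-common-zero-attractor} needs only $1+o(1)$.

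Finally, $H_P$ is only a germ at $s=R$, so a Cauchy estimate on a larger disc is not available for it. Instead, absorb $H_P(y,s)-H_P(y,R)=O(1-s/R)$ into the remainder; the constant $H_P(y,R)$ contributes nothing for $n\geq1$.
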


\begin{proof}
From \cref{lem:complex-fixed-expansion} and
\[
[s^n]\left(-\log\left(1-\frac sR\right)\right)=\frac{R^{-n}}n,
\]
uniform logarithmic transfer gives
\begin{equation}
\frac{P_n(y)}{(2n)!}
=\frac{R^{-n}}{(1-y)^2n}\bigl(1+O_L(n^{-1})\bigr),
\qquad
\frac{Q_n(y)}{(2n-1)!}
=\frac{\rho R^{-n}}{(1-y)^2n}\bigl(1+O_L(n^{-1})\bigr).
\label{eq:complex-fixed-coefficient-asymptotics}
\end{equation}
Stirling's formula in the forms
\begin{equation}
\frac{(2n)!}{(n!)^2}
=\frac{4^n}{\sqrt{\pi n}}\bigl(1+O(n^{-1})\bigr),
\qquad
\frac{(2n-1)!}{n!(n-1)!}
=\frac{4^n}{2\sqrt{\pi n}}\bigl(1+O(n^{-1})\bigr)
\label{eq:complex-central-binomial-asymptotics}
\end{equation}
and $4/R=\mathfrak c$, $\rho=\sqrt R=\pi/2$, yield
\eqref{eq:complex-interior-asymptotics}.

For the moving singularity,
\begin{equation}
[s^n]\left(1-\frac{s}{\Psi}\right)^{-3/2}
=\frac{2}{\sqrt\pi}\,n^{1/2}\Psi^{-n}
\bigl(1+O(n^{-1})\bigr).
\label{eq:complex-transfer-three-halves}
\end{equation}
The remainder $O((1-s/\Psi)^{-1})$ in
\eqref{eq:complex-moving-local-expansion} contributes
$O_L(|\Psi|^{-n})$, which is smaller than the leading coefficient by the
relative factor $O_L(n^{-1/2})$.  Combining
\eqref{eq:complex-transfer-three-halves} with
\eqref{eq:complex-central-binomial-asymptotics} gives
\[
A_P=\frac{2}{\pi}C_P,
\qquad
A_Q=\frac1\pi C_Q,
\]
and hence \eqref{eq:complex-exterior-asymptotics}--
\eqref{eq:complex-A-amplitudes}.  The square identities
\eqref{eq:complex-A-squares} follow from
\eqref{eq:complex-moving-C-squares}; their right-hand sides are nonzero
because $\Psi$ is conformal on $E$.  This also proves the asserted
nonvanishing of the leading terms.
\end{proof}

The exterior estimate must also control the point at infinity; otherwise a
sequence of zeros could in principle escape every compact subset of the
plane.  The following rescaling removes this possibility.

\begin{lemma}[Uniform control at infinity]
\label[lemma]{lem:complex-infinity-control}
There exists $\varepsilon_0>0$ such that, for $|u|\leq\varepsilon_0$,
\begin{equation}
\begin{aligned}
u^{n-1}\widehat P_n(1/u)
&=a_P(u)h(u)^n\left(1+O(n^{-1/2})\right),\\
u^{n-1}\widehat Q_n(1/u)
&=a_Q(u)h(u)^n\left(1+O(n^{-1/2})\right),
\end{aligned}
\label{eq:complex-reversed-asymptotics}
\end{equation}
uniformly in $u$, where $a_P,a_Q,h$ are holomorphic and nonzero on
$|u|\leq\varepsilon_0$ and
\begin{equation}
a_P(0)=a_Q(0)=h(0)=1.
\label{eq:complex-reversed-values-zero}
\end{equation}
Consequently, there is $M>0$ such that, for all sufficiently large $n$,
\begin{equation}
P_n(y)Q_n(y)\neq0
\qquad (|y|\geq M).
\label{eq:complex-uniform-zero-bound}
\end{equation}
\end{lemma}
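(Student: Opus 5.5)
The idea is to treat $y=\infty$ as an ordinary parameter point by passing to the reversed polynomials, and to redo the moving-singularity analysis of \cref{lem:complex-moving-expansion} in rescaled variables that remain analytic at $u=0$.

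\textbf{Rescaled generating function.} Write $y=1/u$. Since $P_n(y)=\sum_p a_{n,p}y^{p-1}$, the reversed polynomial is
\[
u^{n-1}P_n(1/u)=\sum_p a_{n,p}u^{n-p}.
\]
Its generating function in a new variable $\sigma$ is
\[
\sum_{n\ge1}u^{n-1}P_n(1/u)\frac{\sigma^n}{(2n)!}=u^{-1}\mathcal P(1/u,u\sigma),
\]
and the analogous identity holds for $\mathcal Q$. Substitute $z=\sqrt{u\sigma}$ and $x=zt$ in \eqref{eq:complex-s-integral-representations}. The denominator becomes
\[
1-\frac{(z-x)\tan x}{u}=1-\sigma(1-t)t\,\frac{\tan(zt)}{zt}.
\]
Because $\tan(zt)/(zt)$ is an even function of $z$, this is an analytic function of $(u,\sigma,t)$ near $u=0$. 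After the same manipulation the prefactors contribute exactly the power $u$ that is needed. Hence
\[
\mathcal P^\sharp(u,\sigma):=u^{-1}\mathcal P(1/u,u\sigma)
\]
is jointly analytic near $u=0$, and at $u=0$ it reduces to
\[
\sigma\int_0^1\frac{t\,dt}{(1-\sigma t(1-t))^2}.
\]
The corresponding limit for $\mathcal Q^\sharp$ has the same form with numerator $1$ in place of $t$. Both limits have their unique dominant singularity at $\sigma=4$, produced by a nondegenerate quadratic pinch at $t=1/2$. One checks that at $u=0$ the coefficients equal $(n!)^2/(2n)!$ and $n!(n-1)!/(2n-1)!$, respectively. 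This is consistent with the edge coefficients of \cref{prop:PQ-edge-coefficients}, and it is this fact that will give $a_P(0)=a_Q(0)=1$.

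\textbf{Perturbing the pinch.} The pinch equations $D=\partial_tD=0$ are now analytic in $u$ and nondegenerate at $(u,t,\sigma)=(0,1/2,4)$. The implicit function theorem therefore gives an analytic critical value $\sigma_0(u)$ with $\sigma_0(0)=4$. Comparing with \cref{lem:complex-critical-parametrization} identifies it as $\sigma_0(u)=\Psi(1/u)/u$. Since $Y(s)=4/s+O(1)$, the function $\Psi(1/u)/u$ is holomorphic and nonzero near $u=0$. The remaining obstructions do not come close:
\begin{itemize}
\item the fixed endpoint singularity $s=R$ sits at $\sigma=R/u$, which tends to $\infty$;
\item the translated critical values from \eqref{eq:complex-all-critical-translates} likewise move off to infinity in the $\sigma$-plane.
\end{itemize}
So for $|u|\le\varepsilon_0$ one obtains a uniform $\Delta(\sigma_0(u);\eta,\theta)$. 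In it, the Morse-type local computation of \cref{lem:complex-moving-expansion} gives a $(1-\sigma/\sigma_0(u))^{-3/2}$ expansion whose amplitude is analytic in $u$ and nonzero at $u=0$.

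\textbf{Transfer and conclusion.} Uniform transfer \eqref{eq:complex-transfer-three-halves}, followed by division by $(n!)^2/(2n)!$ (resp.\ $n!(n-1)!/(2n-1)!$) using Stirling's formula \eqref{eq:complex-central-binomial-asymptotics}, yields \eqref{eq:complex-reversed-asymptotics}. The base of the exponential is
\[
h(u)=\frac{4}{\sigma_0(u)}=\frac{4u}{\Psi(1/u)},
\]
so $h(0)=1$, and the amplitudes $a_P,a_Q$ are the normalized leading coefficients, equal to $1$ at $u=0$ by the edge computation above. Shrinking $\varepsilon_0$ keeps $h,a_P,a_Q$ nonvanishing on $|u|\le\varepsilon_0$. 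Then for large $n$ the right-hand side of \eqref{eq:complex-reversed-asymptotics} is nonzero for $0<|u|\le\varepsilon_0$, which gives \eqref{eq:complex-uniform-zero-bound} with $M=1/\varepsilon_0$.

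The main obstacle is the second step. One must justify that the contour-continuation argument of \cref{prop:complex-two-singularity-regimes} survives in the rescaled variables uniformly down to $u=0$, in particular that no other denominator zero is reachable inside $|\sigma|<(1+\eta)|\sigma_0(u)|$. I would try to settle this first at $u=0$, where the kernel $1-\sigma t(1-t)$ is explicit and its zeros are elementary, and then extend it to small $|u|$ by joint continuity and compactness.
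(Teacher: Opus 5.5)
Your proposal is correct and follows essentially the same route as the paper. You rescale $y=1/u$ so that the kernel becomes jointly holomorphic at $u=0$, perturb the nondegenerate quadratic pinch, let the endpoint poles and the translated critical values escape to infinity, and apply uniform algebraic transfer with $h(u)=4u/\Psi(1/u)$. The only difference is cosmetic: the paper uses $z=\sqrt u\,\xi$, $x=\sqrt u\,\eta$ with the two symmetric pinches $(\eta,\xi)=(\pm1,\pm2)$, whereas your variables $\sigma=\xi^2$, $t=\eta/\xi$ merge these into the single pinch $(t,\sigma)=(1/2,4)$ and make $a_P(0)=a_Q(0)=1$ visible directly from the exact $u=0$ coefficients $(n!)^2/(2n)!$ and $n!(n-1)!/(2n-1)!$.
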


\begin{proof}
Put $y=1/u$, $z=\sqrt u\,\xi$ and $x=\sqrt u\,\eta$.  The functions
\begin{equation}
A_u(\eta):=\frac{\tan(\sqrt u\,\eta)}{\sqrt u},
\qquad
B_u(\eta):=\sec(\sqrt u\,\eta)
\label{eq:complex-scaled-A-B}
\end{equation}
are jointly holomorphic in $(u,\eta)$ near $u=0$, because their power-series
expansions contain only integral powers of $u$; their removable values are
$A_0(\eta)=\eta$ and $B_0(\eta)=1$.  After the substitution in
\cref{prop:PQ-generating-functions},
\begin{equation}
\begin{aligned}
\sum_{n\geq1}u^{n-1}P_n(1/u)\frac{\xi^{2n}}{(2n)!}
&=\int_0^\xi
\frac{A_u(\eta)}{\bigl(1-(\xi-\eta)A_u(\eta)\bigr)^2}\,d\eta,\\
\sum_{n\geq1}u^{n-1}Q_n(1/u)\frac{\xi^{2n-1}}{(2n-1)!}
&=\int_0^\xi
\frac{B_u(\eta)}{\bigl(1-(\xi-\eta)A_u(\eta)\bigr)^2}\,d\eta.
\end{aligned}
\label{eq:complex-scaled-generating-functions}
\end{equation}
At $u=0$ the denominator is
\[
1-(\xi-\eta)\eta=1-\xi\eta+\eta^2,
\]
and the two critical pairs are
\begin{equation}
(\eta,\xi)=(1,2),
\qquad
(\eta,\xi)=(-1,-2),
\label{eq:complex-scaled-critical-points}
\end{equation}
with nonzero quadratic second derivative.

There are no hidden critical points remaining at bounded $\xi$ when
$u\to0$.  Indeed, for $u\neq0$ the critical equations are exactly the
rescaled equations from \cref{lem:complex-critical-parametrization} and
therefore
\begin{equation}
\sin^2(\sqrt u\,\eta)=u.
\label{eq:complex-scaled-critical-equation}
\end{equation}
On any local choice of $\sqrt u$, put $\alpha(u)=\arcsin\sqrt u$.  The
principal critical values are
\begin{equation}
\eta_\pm(u)=\pm\frac{\alpha(u)}{\sqrt u},
\qquad
\xi_\pm(u)=\pm\kappa(u),
\qquad
\kappa(u):=\frac{f(\alpha(u))}{\sqrt u}=2+O(u),
\label{eq:complex-scaled-principal-critical}
\end{equation}
where both $\alpha(u)/\sqrt u$ and $\kappa(u)$ are single-valued
holomorphic functions of $u$ near $0$.  All other critical pairs have
\begin{equation}
\eta_{k,\pm}(u)=\frac{k\pi\pm\alpha(u)}{\sqrt u},
\qquad
\xi_{k,\pm}(u)=\frac{k\pi\pm f(\alpha(u))}{\sqrt u},
\qquad k\neq0,
\label{eq:complex-scaled-nonprincipal-critical}
\end{equation}
so their $\xi$-coordinates tend to infinity uniformly as $u\to0$.  The
rescaled poles of $\tan$ and $\sec$ are likewise located at
\[
\eta=\frac{\pi/2+k\pi}{\sqrt u},
\]
and hence escape every bounded $\eta$-set.

It follows that, after shrinking $|u|\leq\varepsilon_0$, the two quadratic
coalescences in \eqref{eq:complex-scaled-principal-critical} are the only
obstructions in a fixed bounded $\xi$-neighborhood of the dominant circle.
The analytic implicit-function and Morse lemmas therefore give a uniform
quadratic normal form in the parameter $u$, including $u=0$.  Applying the
same uniform algebraic transfer as in
\cref{prop:complex-uniform-two-phase-asymptotics} to
\eqref{eq:complex-scaled-generating-functions} yields
\eqref{eq:complex-reversed-asymptotics}, with holomorphic functions
$a_P,a_Q,h$ near $u=0$.  At $u=0$ the model in
\eqref{eq:complex-scaled-critical-points} gives
\[
a_P(0)=a_Q(0)=h(0)=1.
\]

For $u\neq0$ the scaled singularity satisfies
$\Psi(1/u)=u\kappa(u)^2$, and comparison with
\eqref{eq:complex-exterior-asymptotics} gives
\begin{equation}
h(u)=\frac{4u}{\Psi(1/u)},
\qquad
a_P(u)=\frac{A_P(1/u)}u,
\qquad
a_Q(u)=\frac{A_Q(1/u)}u.
\label{eq:complex-reversed-functions}
\end{equation}
Equivalently, direct expansion at infinity gives
\begin{equation}
\Psi(1/u)=4u+O(u^2),
\qquad
A_P(1/u)=u+O(u^2),
\qquad
A_Q(1/u)=u+O(u^2),
\label{eq:complex-reversed-removable-expansions}
\end{equation}
confirming that the apparent singularities in
\eqref{eq:complex-reversed-functions} are removable.  Shrinking
$\varepsilon_0$ once more, $a_P,a_Q,h$ are bounded away from zero on the
closed disc.  The uniform relative error in
\eqref{eq:complex-reversed-asymptotics} is then $<1/2$ for all sufficiently
large $n$, so neither reversed polynomial vanishes there.  Taking
$M=\varepsilon_0^{-1}$ proves \eqref{eq:complex-uniform-zero-bound}.
\end{proof}

\subsection{The common zero-attractor theorem}

The conformal coordinate naturally associated with the exterior region is
\begin{equation}
\varphi(y):=\frac{R}{\Psi(y)},
\qquad y\in E.
\label{eq:complex-exterior-Riemann-map}
\end{equation}
It maps $E$ conformally onto $\{w:|w|>1\}$, with
$\varphi(\infty)=\infty$, and satisfies
\begin{equation}
|\varphi(y)|>1\quad(y\in E),
\qquad
|\varphi(y)|=1\quad(y\in\Gamma).
\label{eq:complex-phi-modulus}
\end{equation}

For a polynomial $p$, let $Z(p)$ denote its zero set without multiplicity,
and let $d_H$ denote Hausdorff distance between nonempty compact subsets of
$\mathbb C$.  The preceding asymptotics now give the main result.

\begin{theorem}[Common complex zero attractor]
\label[theorem]{thm:complex-common-zero-attractor}
The complex zero sets of $P_n$ and $Q_n$ converge to the same Jordan curve
$\Gamma$ defined in \eqref{eq:complex-K-Gamma-def}.  More precisely,
\begin{equation}
\boxed{
\lim_{n\to\infty}d_H\bigl(Z(P_n),\Gamma\bigr)=0,
\qquad
\lim_{n\to\infty}d_H\bigl(Z(Q_n),\Gamma\bigr)=0.
}
\label{eq:complex-Hausdorff-main}
\end{equation}
Equivalently,
\begin{equation}
\liminf_{n\to\infty}Z(P_n)
=
\limsup_{n\to\infty}Z(P_n)
=
\Gamma,
\qquad
\liminf_{n\to\infty}Z(Q_n)
=
\limsup_{n\to\infty}Z(Q_n)
=
\Gamma.
\label{eq:complex-Kuratowski-main}
\end{equation}
\end{theorem}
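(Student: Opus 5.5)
The plan is to prove the two inclusions $\limsup_n Z(P_n)\subseteq\Gamma$ and $\Gamma\subseteq\liminf_n Z(P_n)$, and then pass to Hausdorff convergence; $Q_n$ is handled identically. The Kuratowski formulation \eqref{eq:complex-Kuratowski-main} follows from these inclusions because $\liminf\subseteq\limsup$. For the Hausdorff statement we also need all zeros to lie in a fixed compact set. This is supplied by \cref{lem:complex-infinity-control}: for large $n$ every zero satisfies $|y|<M$. On the compact disc $\overline{\mathbb D_M}$, Kuratowski convergence to the compact set $\Gamma$ is equivalent to Hausdorff convergence.

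\emph{Exclusion of zeros away from $\Gamma$.} Fix $\varepsilon>0$ and set $L_\varepsilon:=\{y:|y|\le M,\ \operatorname{dist}(y,\Gamma)\ge\varepsilon\}$. This compact set splits into a compact piece of $K^\circ$ and a compact piece of $E$. On the $K^\circ$ piece, \eqref{eq:complex-interior-asymptotics} gives $\widehat P_n(y)=\mathfrak c^n\pi^{-1/2}(1-y)^{-2}n^{-3/2}(1+O(n^{-1}))$. Here $1\in\Gamma$, so $1-y$ is bounded away from zero, and the right-hand side does not vanish for large $n$. On the $E$ piece, \eqref{eq:complex-exterior-asymptotics} on a finite cover gives $\widehat P_n=A_P(4/\Psi)^n(1+O(n^{-1/2}))$ with $A_P\neq0$ and $\Psi\neq0$. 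Again there are no zeros for large $n$. Hence, for large $n$, every zero lies within distance $\varepsilon$ of $\Gamma$, which proves $\limsup_n Z(P_n)\subseteq\Gamma$ and $\sup_{a\in Z(P_n)}\operatorname{dist}(a,\Gamma)\to0$.

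\emph{Every point of $\Gamma$ attracts zeros.} Suppose $y_0\in\Gamma$ and some disc $B(y_0,r)$ contains no zeros of $\widehat P_{n_j}$ along a subsequence. Then the functions $u_j:=\frac1{n_j}\log|\widehat P_{n_j}|$ are harmonic on $B(y_0,r)$. On $B\cap E$ they converge locally uniformly to $\log 4-\log|\Psi|=\log\mathfrak c+\log|\varphi|$. On $B\cap K^\circ$ they converge locally uniformly to the constant $\log\mathfrak c$. These two limits agree on $\Gamma$ by \eqref{eq:complex-phi-modulus}.

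To get a contradiction I would argue as follows. The $u_j$ are bounded above locally uniformly, for instance by $\log\mathfrak c+\log^+|\varphi|+o(1)$. A uniform upper bound near $\Gamma$ can be obtained from the maximum principle applied to $\widehat P_n/(\mathfrak c\varphi)^n$ on $E$ together with the interior estimate. They are also bounded below at a point of $E\cap B$. Harnack's principle then gives local uniform convergence along a further subsequence to a harmonic limit $u$ on all of $B$. This limit equals $\log\mathfrak c$ on the open set $B\cap K^\circ$, so $u\equiv\log\mathfrak c$ on $B$ by the identity principle. On the other hand, $u=\log\mathfrak c+\log|\varphi|>\log\mathfrak c$ on the nonempty open set $B\cap E$. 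This is a contradiction.

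The nonemptiness of $B\cap K^\circ$ uses that $\Gamma$ is a Jordan curve, so every neighbourhood of a point of $\Gamma$ meets both complementary components. Near the corner $y=1$ one must check that $K^\circ$ actually touches; the tangent description $\arg(y-1)=\pm\pi/3$ shows it does. This argument yields $\Gamma\subseteq\liminf_n Z(P_n)$. Compactness of $\Gamma$ then makes $\sup_{b\in\Gamma}\operatorname{dist}(b,Z(P_n))\to0$ through a finite $\varepsilon$-net.

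The main obstacle is the upper-boundedness needed for the normal-family (Harnack/Hartogs) step uniformly up to $\Gamma$, since the asymptotics are only uniform on compacta off $\Gamma$. I would first try the following. Write $\widehat P_n(y)=\frac{1}{2\pi i}\oint\mathcal P(y,s)s^{-n-1}ds\cdot\frac{(2n)!}{(n!)^2}$ with a circle of radius $R(1-\delta)$. This gives the crude bound $\limsup\frac1n\log|\widehat P_n|\le\log\mathfrak c+\log(1-\delta)^{-1}$ locally uniformly near $\Gamma$, with the defining series convergent for $|s|<\min(|\Psi(y)|,R)$ near $\Gamma$. Since $\delta$ is arbitrary, this is all the Harnack step requires. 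Alternatively, I would apply the two-constants/maximum principle to the subharmonic functions $u_n-\log\mathfrak c-\log^+|\varphi|$ on a large disc, combined with the control at infinity from \cref{lem:complex-infinity-control}.
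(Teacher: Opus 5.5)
Your proof is correct. Your exclusion step is essentially Step~1 of the paper's proof. For the reverse inclusion, however, you take a genuinely different and more local route.

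The paper goes through measures. It uses tightness of $\nu_n^P,\nu_n^Q$ from \cref{lem:complex-infinity-control}, takes subsequential weak limits supported on $\Gamma$, and obtains the identity $\int\log|y-\zeta|\,d\nu=U(y)$ off $\Gamma$. It then uses the planar null measure of $\Gamma$ to identify every limit with $\mu_\Gamma=\frac1{2\pi}\Delta U$. The normal-derivative jump gives the density $|\varphi'|/(2\pi)>0$ on regular arcs, hence $\operatorname{supp}\mu_\Gamma=\Gamma$, with a separate word about the corner $y=1$. Finally, Portmanteau is applied on a finite cover of $\Gamma$ by balls.

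You instead show directly that a zero-free disc about $y_0\in\Gamma$ along a subsequence would force the limit of $\frac1n\log|\widehat P_n|$ to be harmonic there. That is impossible, since the limit equals $\log\mathfrak c$ on $K^\circ$ and exceeds $\log\mathfrak c$ on $E$ by \eqref{eq:complex-phi-modulus}. Your route uses only three inputs:
\begin{itemize}
\item the two-sided asymptotics of \cref{prop:complex-uniform-two-phase-asymptotics};
\item the strict inequality $|\varphi|>1$ on $E$;
\item the fact that every point of $\Gamma$ lies in $\overline{K^\circ}$.
\end{itemize}
The third is automatic from the Jordan curve theorem: $K^\circ$ is the bounded complementary component of $\Gamma$, whose boundary is all of $\Gamma$. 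So your separate tangent check at $y=1$ is superfluous. You thereby avoid the null-measure property of $\Gamma$, the density computation, and the corner analysis. The price is that you do not obtain the weak convergence $\nu_n\to\mu_\Gamma$, which the paper's route delivers at the same time as \cref{cor:complex-common-zero-distribution}.

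There are two minor points. First, the step you single out as the main obstacle is in fact easy. The coefficients of $P_n$ and $Q_n$ are positive, so $|\widehat P_n(y)|\le\widehat P_n(|y|)\le\widehat P_n(t)$ for $|y|\le t$. Take any real $t>1$ with $\overline{B(y_0,r)}\subset\overline{\mathbb D_t}$; such $t$ lies in $E$, so the exterior asymptotics give $\frac1n\log\widehat P_n(t)\to\log(4/\Psi(t))$. This crude uniform upper bound on the disc is all the Harnack step needs.

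Your Cauchy-integral version, by contrast, needs a bound for $\mathcal P(y,s)$ on $|s|=R(1-\delta)$ that is uniform in $y$ near $\Gamma$. Pointwise knowledge of the radius of convergence does not give this, and \cref{prop:complex-two-singularity-regimes} is stated only on compacta off $\Gamma$. The maximum-principle alternative reduces to bounding $u_n$ on $\Gamma$ itself, which is circular.

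Second, Harnack's principle proper concerns monotone sequences. What you actually use is Harnack's inequality: the upper bound together with a lower bound at one point $y_1\in B\cap E$ gives local uniform bounds. That is followed by compactness of locally bounded families of harmonic functions.
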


\begin{proof}
For $n\geq2$, both polynomials have degree $n-1$, so their zero sets are
nonempty.  We prove the two Hausdorff inclusions separately.

\emph{Step 1: zeros cannot remain away from $\Gamma$.}
Fix $\varepsilon>0$ and set
\[
A_\varepsilon:=\{y:\operatorname{dist}(y,\Gamma)\geq\varepsilon\}.
\]
The set $A_\varepsilon\cap K$ is a compact subset of $K^\circ$, so
\eqref{eq:complex-interior-asymptotics} excludes zeros there for all large
$n$.  Choose $M$ as in \cref{lem:complex-infinity-control}.  Then
\[
A_\varepsilon\cap E\cap\{|y|\leq M\}
\]
is a compact subset of $E$, and a finite simply connected cover together
with \eqref{eq:complex-exterior-asymptotics} excludes zeros on this bounded
exterior part.  Finally, \eqref{eq:complex-uniform-zero-bound} excludes
zeros for $|y|\geq M$.  Hence
\begin{equation}
\max_{\zeta\in Z(P_n)}\operatorname{dist}(\zeta,\Gamma)\longrightarrow0,
\qquad
\max_{\zeta\in Z(Q_n)}\operatorname{dist}(\zeta,\Gamma)\longrightarrow0.
\label{eq:complex-outer-Hausdorff}
\end{equation}

\emph{Step 2: every point of $\Gamma$ is approached uniformly by zeros.}
Since the polynomials in \eqref{eq:complex-monic-normalization} are monic of
degree $n-1$, define, with multiplicity,
\begin{equation}
\nu_n^P:=\frac1{n-1}\sum_{P_n(\zeta)=0}\delta_\zeta,
\qquad
\nu_n^Q:=\frac1{n-1}\sum_{Q_n(\zeta)=0}\delta_\zeta.
\label{eq:complex-zero-counting-measures}
\end{equation}
The uniform bound \eqref{eq:complex-uniform-zero-bound} makes both sequences
of probability measures tight.  Take any weakly convergent subsequence,
denoted generically by $\nu_n\Rightarrow\nu$.  By
\eqref{eq:complex-outer-Hausdorff},
\[
\operatorname{supp}\nu\subseteq\Gamma.
\]
For $y\notin\Gamma$, the eventual zero sets stay a positive distance from
$y$, so the logarithmic kernel is uniformly continuous there.  For the
$P$-sequence we have the exact identity
\begin{equation}
\int\log|y-\zeta|\,d\nu_n^P(\zeta)
=\frac1{n-1}\log|\widehat P_n(y)|,
\label{eq:complex-potential-identity-P}
\end{equation}
and analogously for $Q$.  Weak convergence and the two-phase asymptotics
therefore give, for either family,
\begin{equation}
\int\log|y-\zeta|\,d\nu(\zeta)=U(y),
\qquad
U(y):=
\begin{cases}
\log\mathfrak c,&y\in K^\circ,\\[1ex]
\displaystyle\log\left|\frac4{\Psi(y)}\right|,&y\in E.
\end{cases}
\label{eq:complex-limit-potential}
\end{equation}

This determines the subsequential limit uniquely.  Indeed, $\Gamma$ is
real analytic except for the single algebraic corner at $1$, hence has
planar Lebesgue measure zero.  Logarithmic potentials are locally
integrable and satisfy
\[
\Delta\log|y-\zeta|=2\pi\delta_\zeta
\]
distributionally; see \cite[Chapters~2--3]{Ransford1995}.  If two
subsequential limits $\nu$ and $\widetilde\nu$ existed, their potentials
would agree on $\mathbb C\setminus\Gamma$ by
\eqref{eq:complex-limit-potential}, hence almost everywhere in
$\mathbb C$.  They therefore agree as distributions, and applying
$\Delta/(2\pi)$ gives $\nu=\widetilde\nu$.  Thus both full sequences
converge weakly to the same probability measure
\begin{equation}
\mu_\Gamma:=\frac1{2\pi}\Delta U.
\label{eq:complex-mu-Gamma-Laplacian}
\end{equation}

On every regular boundary arc, $U$ is constant on the $K$-side and equals
$\log\mathfrak c+\log|\varphi|$ on the $E$-side.  Hence the jump of the
normal derivative gives
\[
d\mu_\Gamma(y)
=\frac1{2\pi}\,\partial_{\mathbf n}\log|\varphi(y)|\,|dy|
=\frac{|\varphi'(y)|}{2\pi}|dy|>0.
\]
The regular arcs accumulate at the sole corner $y=1$, and therefore
\begin{equation}
\operatorname{supp}\mu_\Gamma=\Gamma.
\label{eq:complex-mu-support}
\end{equation}

It remains to turn this support statement into the uniform reverse
Hausdorff inclusion.  Fix $\varepsilon>0$.  Compactness of $\Gamma$ gives
points $y_1,\ldots,y_m\in\Gamma$ such that
\[
\Gamma\subseteq\bigcup_{j=1}^mB(y_j,\varepsilon/2).
\]
By \eqref{eq:complex-mu-support}, each of these open balls has positive
$\mu_\Gamma$-mass.  The Portmanteau theorem
\cite{Billingsley-convergence} gives
\[
\liminf_{n\to\infty}\nu_n^P\bigl(B(y_j,\varepsilon/2)\bigr)>0,
\qquad
\liminf_{n\to\infty}\nu_n^Q\bigl(B(y_j,\varepsilon/2)\bigr)>0
\]
for every $j$.  Since the cover is finite, for all sufficiently large $n$
each ball contains at least one zero of each polynomial.  Hence
\begin{equation}
\max_{y\in\Gamma}\operatorname{dist}\bigl(y,Z(P_n)\bigr)\longrightarrow0,
\qquad
\max_{y\in\Gamma}\operatorname{dist}\bigl(y,Z(Q_n)\bigr)\longrightarrow0.
\label{eq:complex-inner-Hausdorff}
\end{equation}
Combining \eqref{eq:complex-outer-Hausdorff} and
\eqref{eq:complex-inner-Hausdorff} proves
\eqref{eq:complex-Hausdorff-main}; the Kuratowski formulation
\eqref{eq:complex-Kuratowski-main} follows immediately.
\end{proof}

\begin{corollary}[Common limiting zero distribution]
\label[corollary]{cor:complex-common-zero-distribution}
With the notation of \eqref{eq:complex-zero-counting-measures},
\begin{equation}
\boxed{
\nu_n^P\xrightarrow{\mathrm w}\mu_\Gamma,
\qquad
\nu_n^Q\xrightarrow{\mathrm w}\mu_\Gamma,
}
\label{eq:complex-weak-zero-distribution}
\end{equation}
where $\mu_\Gamma=(2\pi)^{-1}\Delta U$ is the probability measure in
\eqref{eq:complex-mu-Gamma-Laplacian} and
$\operatorname{supp}\mu_\Gamma=\Gamma$.
\end{corollary}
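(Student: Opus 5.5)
The corollary is essentially a by-product of Step~2 in the proof of \cref{thm:complex-common-zero-attractor}, so I will retrace that argument in order and make sure nothing depends on the Hausdorff conclusion itself.

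\emph{Tightness and support.} First I invoke \cref{lem:complex-infinity-control}: for large $n$ all zeros of $P_n$ and $Q_n$ lie in $\{|y|<M\}$. Hence $(\nu_n^P)$ and $(\nu_n^Q)$ are probability measures on a fixed compact disc, and by Prokhorov (or Helly) every subsequence has a weakly convergent further subsequence. The outer Hausdorff inclusion \eqref{eq:complex-outer-Hausdorff}, which uses only \eqref{eq:complex-interior-asymptotics}, \eqref{eq:complex-exterior-asymptotics} and the infinity control, shows that any such limit $\nu$ satisfies $\operatorname{supp}\nu\subseteq\Gamma$.

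\emph{Identifying the potential.} Fix $y\notin\Gamma$. For large $n$ the zeros stay a fixed distance from $y$, so $\zeta\mapsto\log|y-\zeta|$ is bounded and continuous on a neighbourhood of the supports. Weak convergence together with \eqref{eq:complex-potential-identity-P} and its $Q$-analogue then gives
\[
\int\log|y-\zeta|\,d\nu(\zeta)=\lim_j\frac{1}{n_j-1}\log|\widehat P_{n_j}(y)|.
\]
I evaluate the right-hand side by \cref{prop:complex-uniform-two-phase-asymptotics}. On $K^\circ$ it equals $\log\mathfrak c$, because the factors $(1-y)^{-2}n^{-3/2}$ are subexponential. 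On $E$ it equals $\log|4/\Psi(y)|$, because the amplitudes $A_P,A_Q$ are nonzero. This yields the same function $U$ in \eqref{eq:complex-limit-potential} for both families and for every subsequence.

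\emph{Uniqueness and the main obstacle.} The step I expect to need the most care is uniqueness of the limit from its potential. Two limits $\nu,\widetilde\nu$ have potentials agreeing off $\Gamma$. Since $\Gamma$ is real-analytic away from the single corner at $1$, it has planar measure zero, so the two potentials agree as $L^1_{\mathrm{loc}}$ functions. Applying $\Delta/(2\pi)$ then gives $\nu=\widetilde\nu=\mu_\Gamma$, and hence the full sequences converge. Finally, $\operatorname{supp}\mu_\Gamma=\Gamma$ follows from the positive density $|\varphi'|/(2\pi)$ on the regular arcs, computed from the jump of $\partial_{\mathbf n}U$ across $\Gamma$, together with the density of the regular arcs in $\Gamma$ (they accumulate at $y=1$). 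This establishes \eqref{eq:complex-weak-zero-distribution}.
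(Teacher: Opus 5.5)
Your proposal is correct and takes the paper's own route. The paper proves this corollary simply by pointing to Step~2 of the proof of \cref{thm:complex-common-zero-attractor}, which is the chain you retrace: tightness from \cref{lem:complex-infinity-control}, support in $\Gamma$ from the outer inclusion \eqref{eq:complex-outer-Hausdorff}, identification of the potential $U$ via the two-phase asymptotics, uniqueness by applying $\Delta/(2\pi)$ because $\Gamma$ is Lebesgue-null, and full support from the positive density $|\varphi'|/(2\pi)$ on the regular arcs. You are also right that this step uses only the outer inclusion and not the reverse Hausdorff inclusion, which the paper derives afterwards from the support statement, so there is no circularity.
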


\begin{proof}
This is the weak-convergence statement established in Step~2 of the proof
of \cref{thm:complex-common-zero-attractor}.
\end{proof}

\clearpage
\subsection{Numerical experiments}
\label{subsec:complex-numerical-experiments}

We briefly illustrate \cref{thm:complex-common-zero-attractor} numerically.
For $n=20,50,100,150$, the coefficients of $P_n$ were generated using the exact
integer recurrence \eqref{eq:unified-coefficient-recurrence}, and the
$n-1$ complex zeros were then computed with high-precision arithmetic.  The
limiting curve $\Gamma$ was sampled from the critical-point parametrization
\eqref{eq:complex-f-def}: on the relevant boundary component one has
$|f(x)|=\rho$ and $y=\csc^2x$, in agreement with
\eqref{eq:complex-Gamma-equimodular}.

% The H specifier prevents the figure from floating into the next subsection.
% The reduced panel widths allow the four plots to be arranged compactly.
\begin{figure}[H]
\centering
\begin{minipage}[t]{0.44\textwidth}
\centering
\includegraphics[width=\linewidth]{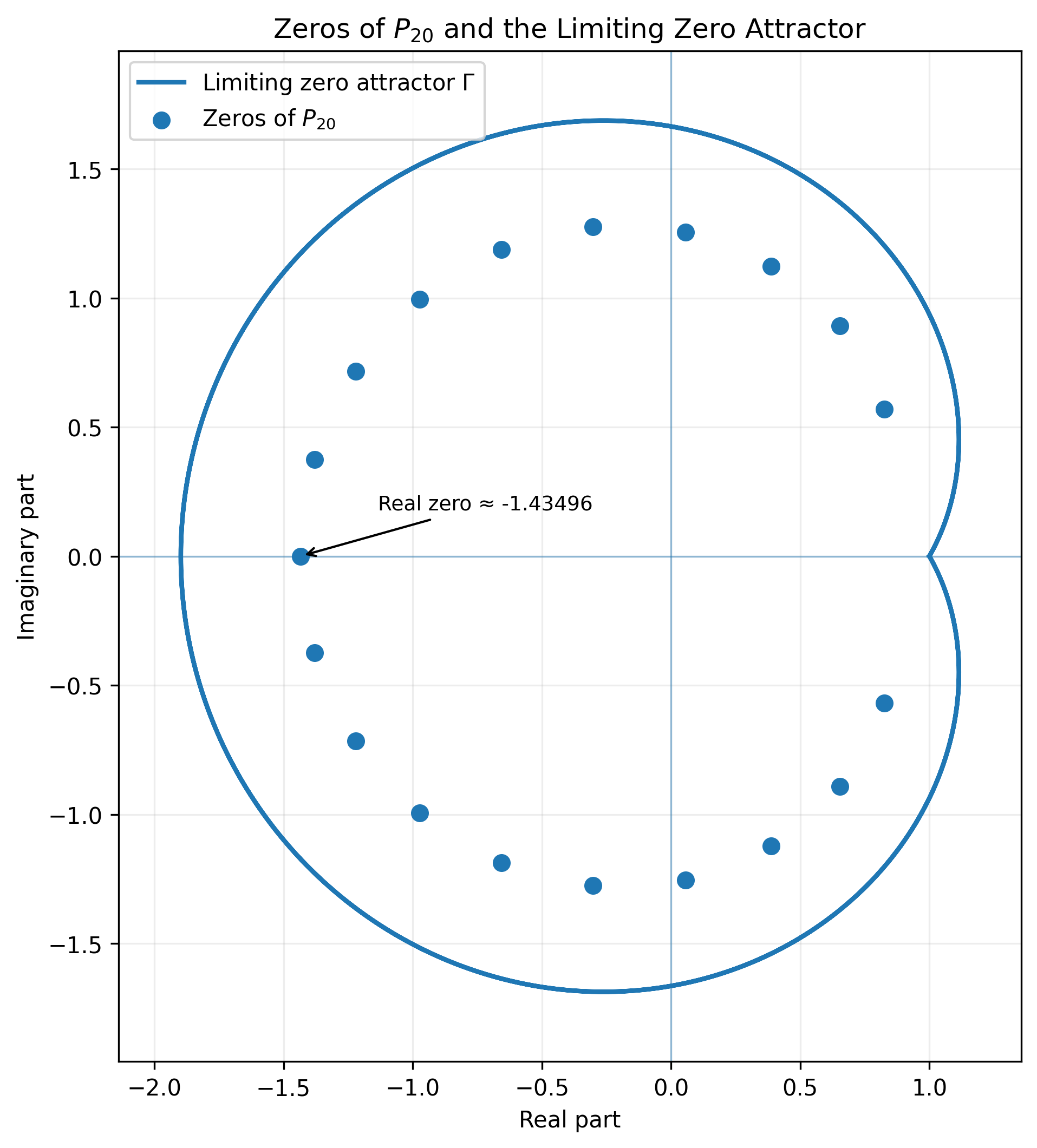}
\smallskip
{\small (a) $n=20$.}
\end{minipage}\hfill
\begin{minipage}[t]{0.44\textwidth}
\centering
\includegraphics[width=\linewidth]{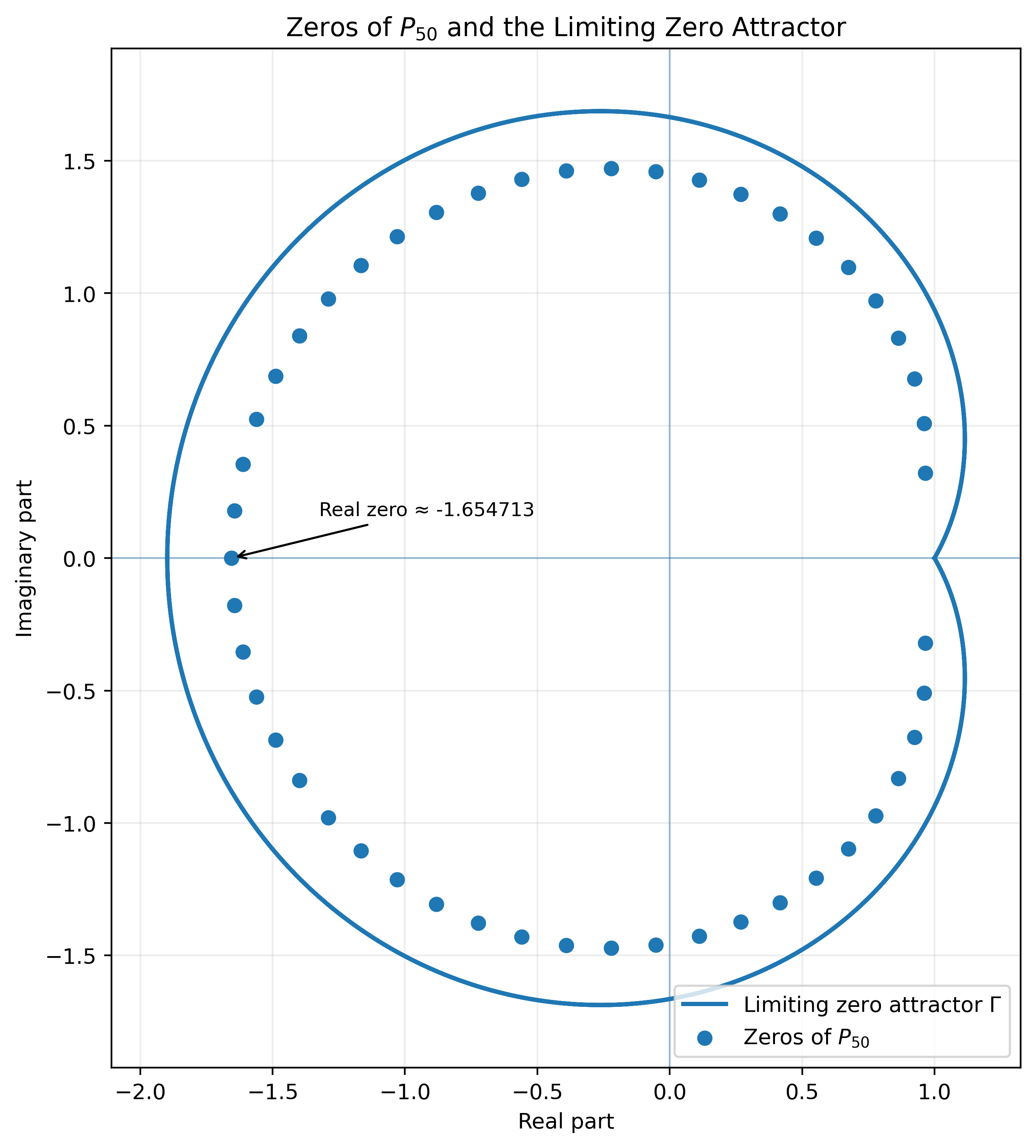}
\smallskip
{\small (b) $n=50$.}
\end{minipage}

\medskip
\begin{minipage}[t]{0.44\textwidth}
\centering
\includegraphics[width=\linewidth]{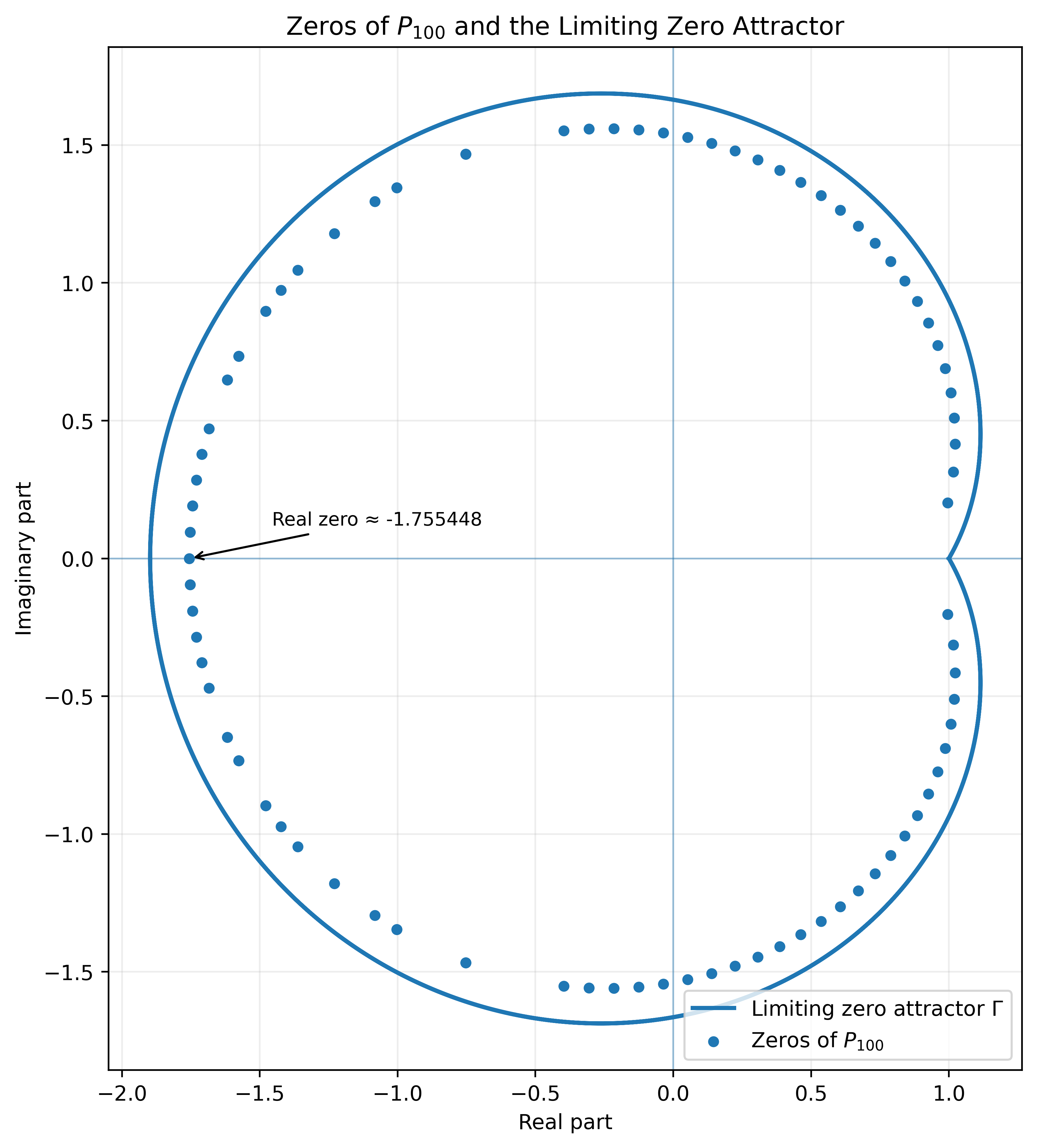}
\smallskip
{\small (c) $n=100$.}
\end{minipage}\hfill
\begin{minipage}[t]{0.44\textwidth}
\centering
\includegraphics[width=\linewidth]{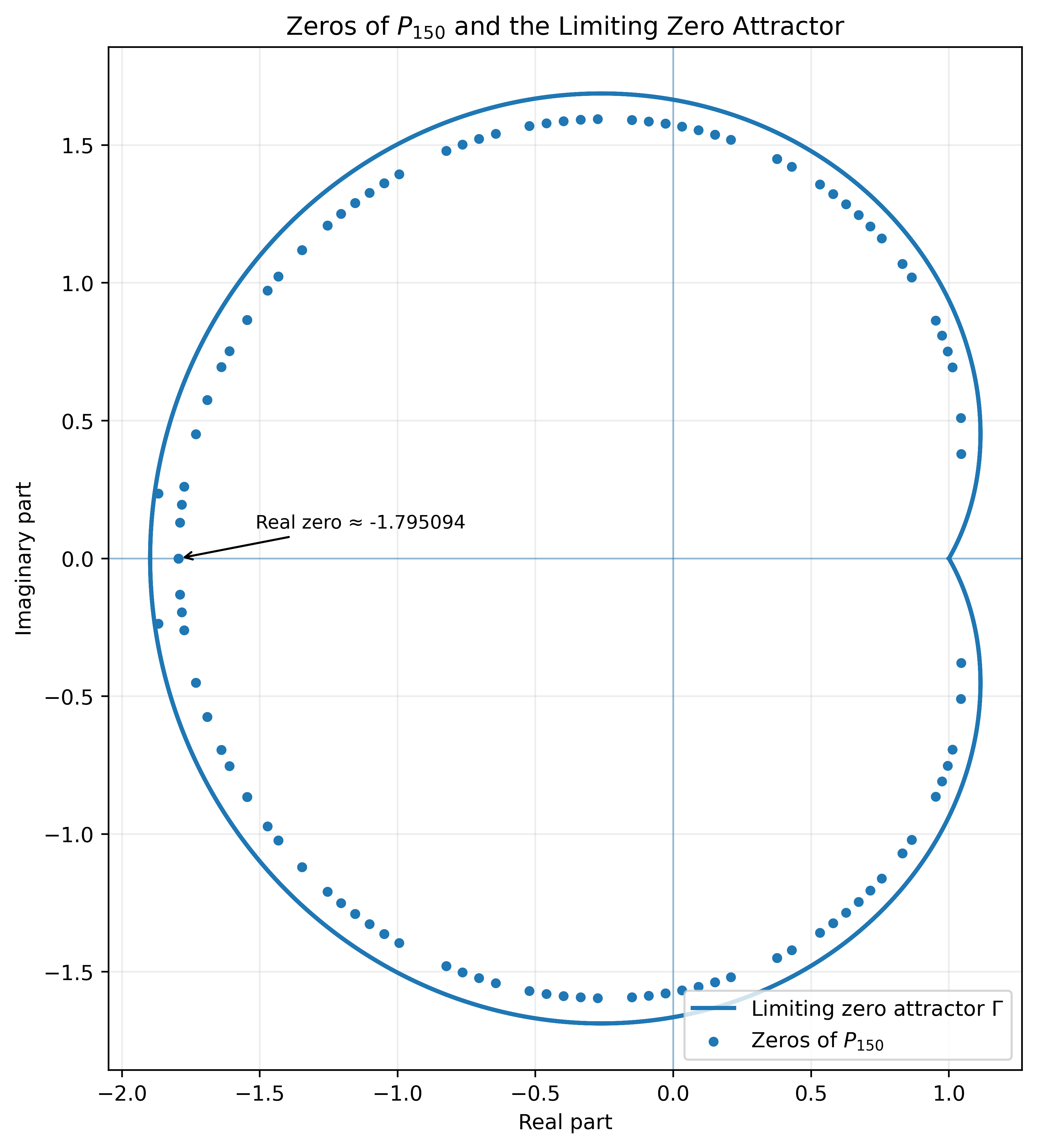}
\smallskip
{\small (d) $n=150$.}
\end{minipage}

\caption{Numerical zeros of $P_n$ for $n=20,50,100,150$ (dots), together with
the limiting zero-attractor curve $\Gamma$ (solid curve).  As $n$ increases,
the zeros visibly approach $\Gamma$, illustrating the Hausdorff convergence
in \cref{thm:complex-common-zero-attractor}.}
\label{fig:Pn-zero-attractor-numerics}
\end{figure}

The concentration toward $\Gamma$ is already apparent for $n=20$ and becomes
substantially sharper for $n=50$, $n=100$, and $n=150$; see
\cref{fig:Pn-zero-attractor-numerics}.  The unique real zeros in these four
even-index cases are approximately
\[
-1.4349642845,\qquad -1.6547134750,\qquad -1.7554484154,\qquad -1.7950941668,
\]
respectively.  Their motion toward the limiting value
$-c_*=-1.8976700911\ldots$ is also consistent with
\cref{prop:PQ-real-zero-limit}.  These computations are intended only as
a numerical illustration; the convergence of the full complex zero sets is
the content of \cref{thm:complex-common-zero-attractor}.

The preceding result gives more than convergence of sets: it identifies the
limiting zero distribution.  We now make this measure explicit.

\subsection{Capacity, limiting zero density, and further consequences}

The exterior map \eqref{eq:complex-exterior-Riemann-map} allows the
limit potential to be written
\begin{equation}
U(y)
=
\log\mathfrak c
+
\begin{cases}
0,&y\in K,\\
\log|\varphi(y)|,&y\in E.
\end{cases}
\label{eq:complex-Green-potential}
\end{equation}

\begin{proposition}[Capacity and limiting zero density]
\label[proposition]{prop:complex-density-capacity}
The compact set $K$ has logarithmic capacity
\begin{equation}
\boxed{
\operatorname{cap}(K)=\frac{16}{\pi^2}.
}
\label{eq:complex-capacity}
\end{equation}
The common weak limit in \eqref{eq:complex-mu-Gamma-Laplacian} is the
equilibrium measure of $K$, equivalently the harmonic measure of $E$ seen
from infinity.  On the regular part of $\Gamma$ it has arclength density
\begin{equation}
\boxed{
\frac{d\mu_\Gamma}{|dy|}
=
\frac{|\varphi'(y)|}{2\pi}
=
\frac{|\Psi'(y)|}{2\pi R}
=
\frac{2}{\pi^2}
\frac{|y-1|^{1/2}}{|y|^2}.
}
\label{eq:complex-zero-density}
\end{equation}
The right-hand side tends to $0$ as $y\to1$ along $\Gamma$; hence
$\operatorname{supp}\mu_\Gamma=\Gamma$.
\end{proposition}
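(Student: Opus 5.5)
The plan is to read everything off the limit potential $U$ in \eqref{eq:complex-limit-potential}, which was identified in the proof of \cref{thm:complex-common-zero-attractor} as the logarithmic potential $\int\log|y-\zeta|\,d\mu_\Gamma(\zeta)$ of the probability measure $\mu_\Gamma$ supported on $\Gamma=\partial K$. First I verify that $\varphi=R/\Psi$ is the normalized exterior Riemann map of $E$. By \cref{prop:complex-exterior-map}, $\Psi:E\to\mathbb D_R$ is conformal with $\Psi(\infty)=0$. The expansion \eqref{eq:complex-reversed-removable-expansions}, $\Psi(1/u)=4u+O(u^2)$, gives
\[
\varphi(y)=\frac{R}{4}\,y+O(1)=\frac{y}{\mathfrak c}+O(1),\qquad y\to\infty .
\]
Hence $g_E(y,\infty)=\log|\varphi(y)|$ is the Green function of $E$ with pole at $\infty$. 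Its Robin constant is $-\log(1/\mathfrak c)=\log\mathfrak c$, so $\operatorname{cap}(K)=\mathfrak c=16/\pi^2$. I can cross-check this against \eqref{eq:complex-Green-potential}: the potential $U(y)-\log|y|\to0$ at $\infty$, and $U(y)=\log\mathfrak c+\log|\varphi(y)|\sim\log|y|$, which is consistent.

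Second, I identify $\mu_\Gamma$ with the equilibrium measure $\omega_K$. The function $U$ equals $\log\mathfrak c=\log\operatorname{cap}(K)$ on $K^\circ$ and $\log\operatorname{cap}(K)+g_E(\cdot,\infty)$ on $E$. This is exactly the characterization of the equilibrium potential $\int\log|y-\zeta|\,d\omega_K$ (Frostman; see \cite{Ransford1995}). Since $\Gamma$ has planar measure zero, two measures whose potentials agree off $\Gamma$ coincide, by the same distributional Laplacian argument used in Step~2 of that proof. It is also standard that $\omega_K$ equals the harmonic measure of $E$ at $\infty$. One point needs care: $K$ should be regular, so that $g_E$ vanishes continuously on $\Gamma$. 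This follows because $\Gamma$ is a Jordan curve.

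Third, I compute the density. On a regular arc the jump-of-normal-derivative computation already gives $d\mu_\Gamma=(2\pi)^{-1}\partial_{\mathbf n}\log|\varphi|\,|dy|$. Because $\log|\varphi|$ is the real part of the holomorphic function $\log\varphi$, vanishes on $\Gamma$, and increases outward, this normal derivative equals $|\varphi'/\varphi|=|\varphi'|$ on $|\varphi|=1$. Next, $|\varphi'|=R|\Psi'|/|\Psi|^2=|\Psi'|/R$ on $\Gamma$, since $|\Psi|=R$ there. Finally, the formula $\Psi'(y)=-2\sqrt{\Psi}\sqrt{y-1}/y^2$ from the proof of \cref{lem:complex-moving-expansion}, continued to $\Gamma$ by the boundary extension in \cref{prop:complex-exterior-map}, gives
\[
|\Psi'(y)|=\frac{2\rho\,|y-1|^{1/2}}{|y|^2}.
\]
Dividing by $2\pi R=2\pi\rho^2$ yields $\dfrac{2}{\pi^2}\dfrac{|y-1|^{1/2}}{|y|^2}$. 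This density is positive on $\Gamma\setminus\{1\}$, because $0\notin\Gamma$ by \cref{prop:complex-real-intersections} and the curve is bounded away from $0$. It tends to $0$ as $y\to1$, and taking closures gives $\operatorname{supp}\mu_\Gamma=\Gamma$.

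I expect the main obstacle to be justifying the boundary behaviour. The Riemann map must extend with $\varphi'$ continuous up to the regular arcs, so that the normal-derivative formula is legitimate. One also has to check that no mass sits at the corner $y=1$. For the first point I would use real-analyticity of $\Gamma$ away from $1$ and Schwarz reflection. For the second I would note that $\mu_\Gamma$ is the harmonic measure of a Jordan domain, which has no atoms. Alternatively, the integrable density near $1$ already has total mass $1$ by the normalization of harmonic measure.
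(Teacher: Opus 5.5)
Your proposal is correct and follows essentially the same route as the paper. The capacity is read off from $\varphi(y)=\frac{\pi^2}{16}y+O(1)$ at infinity, $\mu_\Gamma$ is identified with the equilibrium measure through the potential $U$, and the density comes from $|\varphi'|=|\Psi'|/R$ on $\Gamma$ together with $\Psi'=2\Phi\Phi'$, $\Phi'(y)=-\sqrt{y-1}/y^2$ and $|\Phi|=\rho$ there. Your extra remarks (regularity of $K$, analytic extension of $\varphi$ across the regular arcs, no atom at $y=1$, and $0\notin\Gamma$) only make explicit points the paper leaves implicit.
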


\begin{proof}
From \eqref{eq:complex-Psi-Phi}, with $u=1/y$,
\begin{equation}
\begin{aligned}
\Phi(y)
&=2u^{1/2}
-\frac13u^{3/2}
-\frac1{20}u^{5/2}
-\frac1{56}u^{7/2}
+O(u^{9/2}),\\
\Psi(y)
&=4u
-\frac43u^2
-\frac4{45}u^3
-\frac4{105}u^4
+O(u^5).
\end{aligned}
\label{eq:complex-Psi-infinity}
\end{equation}
Therefore
\begin{equation}
\frac4{\Psi(y)}
=y+\frac13+\frac{2}{15y}
+\frac{58}{945y^2}
+O(y^{-3}),
\label{eq:complex-four-over-Psi}
\end{equation}
and hence
\begin{equation}
\varphi(y)
=\frac{\pi^2}{16}y+O(1)
\qquad (y\to\infty).
\label{eq:complex-phi-infinity}
\end{equation}
For the normalized exterior Riemann map, the coefficient of $y$ is the
reciprocal logarithmic capacity; see, for example,
\cite[Chapter~5]{Ransford1995}.  This proves \eqref{eq:complex-capacity}.
Equation \eqref{eq:complex-Green-potential} then identifies
$\mu_\Gamma=(2\pi)^{-1}\Delta U$ with the equilibrium measure.

On a smooth boundary arc, harmonic measure at infinity is the pullback of
normalized angular measure under $\varphi$, so
\[
d\mu_\Gamma(y)=\frac{|\varphi'(y)|}{2\pi}|dy|.
\]
Furthermore,
\begin{equation}
\Phi'(y)
=-\frac{\sqrt{y-1}}{y^2},
\qquad
\Psi'(y)=2\Phi(y)\Phi'(y).
\label{eq:complex-Phi-Psi-derivatives}
\end{equation}
Since $|\Phi(y)|=\rho=\pi/2$ on $\Gamma$,
\[
|\Psi'(y)|
=\pi\frac{|y-1|^{1/2}}{|y|^2}.
\]
Substitution into $|\varphi'|=|\Psi'|/R$ gives
\eqref{eq:complex-zero-density}.  This density is strictly positive at
every regular point and tends to $0$ only at the corner $y=1$; every
neighborhood of that point contains regular arcs of positive measure, so
the support is the whole curve.
\end{proof}

The limiting measure can also be described parametrically.  If
\begin{equation}
\Psi(y(\theta))
=R e^{i\theta},
\qquad 0\leq\theta<2\pi,
\label{eq:complex-Gamma-angular-param}
\end{equation}
then
\begin{equation}
d\mu_\Gamma(y(\theta))=\frac{d\theta}{2\pi}.
\label{eq:complex-uniform-angular-measure}
\end{equation}
Thus the zeros are asymptotically uniformly distributed after the conformal
coordinate $\varphi$ is applied.

\begin{corollary}[First limiting moments]
\label[corollary]{cor:complex-limiting-moments}
For either family, if the zeros are counted with multiplicity, then
\begin{equation}
\begin{aligned}
\lim_{n\to\infty}
\frac1{n-1}\sum_{P_n(\zeta)=0}\zeta
&=
\lim_{n\to\infty}
\frac1{n-1}\sum_{Q_n(\zeta)=0}\zeta
=-\frac13,\\
\lim_{n\to\infty}
\frac1{n-1}\sum_{P_n(\zeta)=0}\zeta^2
&=
\lim_{n\to\infty}
\frac1{n-1}\sum_{Q_n(\zeta)=0}\zeta^2
=-\frac7{45},\\
\lim_{n\to\infty}
\frac1{n-1}\sum_{P_n(\zeta)=0}\zeta^3
&=
\lim_{n\to\infty}
\frac1{n-1}\sum_{Q_n(\zeta)=0}\zeta^3
=-\frac{83}{945}.
\end{aligned}
\label{eq:complex-first-moments}
\end{equation}
\end{corollary}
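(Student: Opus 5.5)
The plan is to read the moments off the limit measure $\mu_\Gamma$ and its logarithmic potential near infinity. Combining \cref{cor:complex-common-zero-distribution} with \cref{lem:complex-infinity-control}, all zeros of $P_n$ and $Q_n$ lie in the fixed disc $\{|y|\le M\}$ for large $n$. On that disc $\zeta\mapsto\zeta^k$ is bounded and continuous, so weak convergence gives
\[
\frac1{n-1}\sum_{P_n(\zeta)=0}\zeta^k\longrightarrow m_k:=\int\zeta^k\,d\mu_\Gamma(\zeta),
\]
and the same holds for $Q_n$. It therefore suffices to compute $m_1,m_2,m_3$ for the single measure $\mu_\Gamma$, and this automatically shows the two families have the same limits.

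To compute $m_k$, use the identity $\int\log|y-\zeta|\,d\mu_\Gamma(\zeta)=U(y)=\log|4/\Psi(y)|$ for $y\in E$, from \eqref{eq:complex-limit-potential}. For $|y|>M$ the function $G(y):=\int\log(1-\zeta/y)\,d\mu_\Gamma(\zeta)$ is holomorphic, vanishes at infinity, and has the expansion
\[
G(y)=-\sum_{k\ge1}\frac{m_k}{k\,y^k}.
\]
Likewise $H(y):=\log\bigl(4/(y\Psi(y))\bigr)$ is holomorphic near infinity with $H(\infty)=0$, by \eqref{eq:complex-four-over-Psi}. By construction $\Re G=\Re H$ on a punctured neighbourhood of $\infty$. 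Hence $G-H$ is a holomorphic function there with vanishing real part, so it is an imaginary constant, and that constant is $0$ because both functions vanish at $\infty$. Consequently $G=H$.

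Next, expand $H$ using \eqref{eq:complex-four-over-Psi}:
\[
H(y)=\log\Bigl(1+\tfrac1{3y}+\tfrac2{15y^2}+\tfrac{58}{945y^3}+O(y^{-3-1})\Bigr).
\]
Comparing coefficients of $y^{-1}$ gives $-m_1=\tfrac13$. Comparing coefficients of $y^{-2}$ gives $-\tfrac{m_2}{2}=\tfrac2{15}-\tfrac1{18}=\tfrac7{90}$, so $m_2=-\tfrac7{45}$. The $y^{-3}$ coefficient is
\[
\tfrac{58}{945}-\tfrac13\cdot\tfrac2{15}+\tfrac1{81}\cdot\tfrac13 ,
\]
and it must equal $-\tfrac{m_3}{3}$; evaluating this is routine rational arithmetic and should give $m_3=-\tfrac{83}{945}$.

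As a consistency check, $m_1$ can be verified directly from \cref{prop:PQ-edge-coefficients}. The sum of the zeros of $P_n$ is $-(n^2-1)/(3n)$, and that of $Q_n$ is $-(2n-1)/6$. Dividing each by $n-1$ gives a limit of $-\tfrac13$ in both cases.

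The main obstacle is the accuracy of the expansion \eqref{eq:complex-four-over-Psi} through order $y^{-2}$, equivalently of \eqref{eq:complex-Psi-infinity} through $u^4$, since $m_3$ depends on all of these terms. I would recheck that expansion independently by inverting $\Phi(y)=\arcsin u^{1/2}+u^{1/2}(1-u)^{1/2}$ as a series in $u=1/y$ before finishing the comparison. A second, smaller point is the branch of the logarithm in $H$: it is fixed by requiring $H(\infty)=0$, which is exactly what makes the imaginary constant in $G-H$ vanish.
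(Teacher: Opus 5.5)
Your framework is the paper's own proof, and it is sound. You get moment convergence from weak convergence together with the uniform bound $|\zeta|\le M$, and you identify the moments by matching the complex potential with $\log(4/\Psi)$ at infinity. Your single-valued pair $G(y)=\int\log(1-\zeta/y)\,d\mu_\Gamma$ and $H=\log\bigl(4/(y\Psi)\bigr)$ handles the branch issue a little more cleanly than the paper's choice of logarithms near the positive real end. Your independent check of $m_1$ from the edge coefficients is also correct.

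\textbf{The error is in the one step you left undone.} With $w=\frac1{3y}+\frac2{15y^2}+\frac{58}{945y^3}+\cdots$, the cubic term $w^3/3$ of $\log(1+w)$ contributes $\frac13\cdot\bigl(\frac13\bigr)^3=\frac1{81}$ at order $y^{-3}$. You wrote $\frac1{81}\cdot\frac13$ instead.

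\begin{itemize}
\item Your expression as written gives $\frac{58}{945}-\frac2{45}+\frac1{243}=\frac{179}{8505}$. That yields $m_3=-\frac{179}{2835}$, which is not the claimed $-\frac{83}{945}$.
\item The corrected expression gives $\frac{58}{945}-\frac2{45}+\frac1{81}=\frac{174-126+35}{2835}=\frac{83}{2835}$. Hence $m_3=-\frac{83}{945}$.
\end{itemize}

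The statement consists precisely of these constants, so you should carry out this arithmetic rather than assert what it ``should give''.

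On the obstacle you flagged: the expansions \eqref{eq:complex-Psi-infinity} and \eqref{eq:complex-four-over-Psi} are correct. A quick check is available. With $t=u^{1/2}$ one has $\frac{d\Phi}{dt}=2\sqrt{1-t^2}$, so
$\Phi=2t-\frac{t^3}{3}-\frac{t^5}{20}-\frac{t^7}{56}-\cdots$.
Squaring gives $\Psi=4u\bigl(1-\frac u3-\frac{u^2}{45}-\frac{u^3}{105}-\cdots\bigr)$. Taking the reciprocal then reproduces $\frac4\Psi=y+\frac13+\frac2{15y}+\frac{58}{945y^2}+\cdots$.
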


\begin{proof}
By \cref{thm:complex-common-zero-attractor,prop:complex-density-capacity},
the normalized zero-counting measures converge weakly to $\mu_\Gamma$, and
\cref{lem:complex-infinity-control} gives a common compact set containing
all zeros for large $n$.  Hence every polynomial moment converges to the
corresponding moment of $\mu_\Gamma$.

It remains to identify those moments.  From
\eqref{eq:complex-four-over-Psi},
\begin{equation}
\log\left(\frac4{\Psi(y)}\right)
=\log y+\frac1{3y}+\frac7{90y^2}
+\frac{83}{2835y^3}+O(y^{-4}).
\label{eq:complex-log-potential-expansion}
\end{equation}
To justify comparison with complex moments, choose a simply connected
neighborhood of the positive real end of $E$ and compatible logarithm
branches there.  The function
\[
H(y):=\int_\Gamma\log(y-\zeta)\,d\mu_\Gamma(\zeta)
\]
is analytic in that neighborhood, and
$\Re H(y)=U(y)=\Re\log(4/\Psi(y))$.  Hence
$H(y)-\log(4/\Psi(y))$ is a purely imaginary constant.  Both functions
satisfy $H(y)-\log y\to0$ and
$\log(4/\Psi(y))-\log y\to0$ as $y\to+\infty$, so the constant is zero.
Therefore, as an analytic germ at infinity,
\begin{equation}
\log\left(\frac4{\Psi(y)}\right)
=\int_\Gamma\log(y-\zeta)\,d\mu_\Gamma(\zeta)
=\log y-
\sum_{k\geq1}\frac1{k y^k}
\int_\Gamma\zeta^k\,d\mu_\Gamma(\zeta).
\label{eq:complex-complex-potential-moments}
\end{equation}
Comparing \eqref{eq:complex-log-potential-expansion} and
\eqref{eq:complex-complex-potential-moments} yields
\[
\int_\Gamma\zeta\,d\mu_\Gamma=-\frac13,
\qquad
\int_\Gamma\zeta^2\,d\mu_\Gamma=-\frac7{45},
\qquad
\int_\Gamma\zeta^3\,d\mu_\Gamma=-\frac{83}{945},
\]
which proves \eqref{eq:complex-first-moments}.
\end{proof}

Finally, although the two families have the same exponential rate and the
same limiting zero distribution, their leading amplitudes are different.
The difference is especially transparent in the following ratio limit.

\begin{corollary}[Asymptotic ratio of the two families]
\label[corollary]{cor:complex-PQ-ratio}
With the monic normalizations in \eqref{eq:complex-monic-normalization},
locally uniformly away from $\Gamma$,
\begin{equation}
\frac{\widehat Q_n(y)}{\widehat P_n(y)}
\longrightarrow
\begin{cases}
\displaystyle \frac\pi4,
&y\in K^\circ,\\[2ex]
\displaystyle \frac12\sqrt{y\Psi(y)},
&y\in E,
\end{cases}
\label{eq:complex-PQ-ratio-limit}
\end{equation}
where the square root in the exterior region is chosen by analytic
continuation from $y>1$.
\end{corollary}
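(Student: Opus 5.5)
The plan is to divide the two-phase asymptotics of \cref{prop:complex-uniform-two-phase-asymptotics}. Fix a compact set $L$ with $L\cap\Gamma=\varnothing$ and treat separately $L\cap K$, which is a compact subset of $K^\circ$, and $L\cap E$, which is a compact subset of $E$.

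In the interior case, \eqref{eq:complex-interior-asymptotics} gives, uniformly on $L\cap K$, a common factor $\mathfrak c^n(1-y)^{-2}n^{-3/2}$ for both families. This factor is nonzero because $1\in\Gamma$. Taking the quotient leaves
\[
\frac{\sqrt\pi/4}{1/\sqrt\pi}\,\frac{1+O_L(n^{-1})}{1+O_L(n^{-1})}=\frac\pi4+O_L(n^{-1}),
\]
which is the first limit.

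In the exterior case, cover $L\cap E$ by finitely many simply connected patches $U_j$ with compatible branches, as in \cref{lem:complex-moving-expansion}. On each patch, \eqref{eq:complex-exterior-asymptotics} shows that the geometric factor $(4/\Psi(y))^n$ cancels. The quotient is therefore $A_Q(y)/A_P(y)\,(1+O_L(n^{-1/2}))$. This step uses the nonvanishing of $A_P$ on $E$, which was recorded in the proposition. From \eqref{eq:complex-A-amplitudes},
\[
\frac{A_Q}{A_P}=\frac{\sqrt2\,y^{2}\Psi^{3/4}}{2\sqrt2\,y^{3/2}\Psi^{1/4}}=\frac12\,y^{1/2}\Psi(y)^{1/2},
\]
where the $(y-1)^{1/4}$ factors cancel. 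Equivalently, one can compute $C_Q/C_P=\sqrt{y\Psi(y)}$ from the proof of \cref{lem:complex-moving-expansion} and use $A_P=\tfrac2\pi C_P$ and $A_Q=\tfrac1\pi C_Q$.

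The main point needing care is the branch. The quotient $A_Q/A_P$ is a single-valued holomorphic function on each patch. Its square, $\tfrac14 y\Psi(y)$, is single-valued on all of $E$. Moreover, $y\Psi(y)$ has no zeros in $E\setminus\{\infty\}$ and tends to $4$ as $y\to\infty$ by \eqref{eq:complex-Psi-infinity}. So $y\Psi(y)$ is nonvanishing on the simply connected domain $E$, including at $\infty$ after the removable singularity is filled in. Hence it has a global holomorphic square root on $E$.

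On the real ray $y>1$ all quantities are positive: $x_0\in(0,\rho)$ and $z_0>0$, so $A_Q/A_P>0$ there. The patch quotients are local square roots of the same function, continuous and nonvanishing. By connectedness, each of them coincides with the global root normalized to be positive on $y>1$, which is exactly the analytic continuation from $y>1$. As a consistency check, the limit tends to $1$ at infinity, matching $a_Q(0)/a_P(0)=1$ in \cref{lem:complex-infinity-control}.

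Uniformity on $L\cap E$ then follows from the finiteness of the cover. Combining the two cases gives the local uniform convergence away from $\Gamma$.
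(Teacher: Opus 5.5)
Your proposal is correct and is the same argument as the paper's: divide the two lines of \eqref{eq:complex-interior-asymptotics} and of \eqref{eq:complex-exterior-asymptotics}, and read off $A_Q/A_P=\tfrac12\sqrt{y\Psi(y)}$ from \eqref{eq:complex-A-amplitudes}. Your additional points only make explicit what the paper leaves implicit. These are the global square root of the nonvanishing function $y\Psi(y)$ on the simply connected domain $E$ (with value $4$ at $\infty$), normalized to be positive on $y>1$, and uniformity obtained from the finite cover.
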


\begin{proof}
Divide the two lines of \eqref{eq:complex-interior-asymptotics} and of
\eqref{eq:complex-exterior-asymptotics}, respectively.  In the exterior
region,
\[
\frac{A_Q(y)}{A_P(y)}
=\frac12\sqrt{y\Psi(y)}.
\]
\end{proof}

The point $y=1$ is therefore the unique transition point at which the
moving quadratic singularity meets the fixed endpoint singularity.  The
$3/2$-power law in \eqref{eq:complex-Phi-near-one} explains both the
algebraic corner of $\Gamma$ and the vanishing of the limiting zero density
there.  A finer double-scaling analysis near this coalescence is a separate
problem and is not needed for the zero-attractor theorem proved above.

\section*{Acknowledgments}
The author acknowledges the use of an AI language model for language and
presentation assistance and for exploratory algebraic checks.  

\printbibliography

\end{document}